\documentclass[11pt]{article}

\usepackage[applemac]{inputenc}	% enabling unicode charset
\usepackage[T1]{fontenc}
\usepackage[english]{babel}	% for new german spelling
\usepackage[a4paper,DIV=11]{typearea}
\usepackage{parskip}
\usepackage{amsmath,amsfonts,amssymb,amsthm, verbatim}		% for additional math stuff
\usepackage{mathtools}                          % same here
\usepackage{color}      % for coloured warning messages and remarks...
\usepackage{graphicx}
\usepackage{booktabs}
\usepackage{stmaryrd}  % lightning as contradiction symbol

\newcommand{\ds}{\displaystyle}

% some optical tunings
\frenchspacing

% user-defined macros
	% direct access to C
\newcommand{\R}{\mathbb{R}}	% direct access to R
	% direct access to Q
\newcommand{\N}{\mathbb{N}}	% direct access to N
\newcommand{\Z}{\mathbb{Z}}	% direct access to Z
	% direct access to K
% direct access to S

\newcommand{\FF}{\mathcal{S}_0}

	% direct access to D

\newcommand{\LL}{\mathcal{L}}
\newcommand{\AAA}{\mathcal{A}}
\newcommand{\nd}{\noindent}

\newcommand{\ttt}{{\tilde T}}

\newcommand{\eps}{\varepsilon}

\newcommand{\beq}{\begin{equation}}
\newcommand{\eeq}{\end{equation}}

  % for set definitions {x | ...}

\newcommand{\wto}{\xrightharpoonup{}}          % weak convergence
     % weak-* convergence

   % script-F denoting the Fourier transform
   % Lebesgue measure
   % Hausdorff measure
   % Landau capital O
             % Landau small o

% dashint

\def\XXint#1#2#3{{\setbox0=\hbox{$#1{#2#3}{\int}$}
\vcenter{\hbox{$#2#3$}}\kern-.5\wd0}}

 % GOOD!
 % Good for Computer Modern!

\DeclareMathOperator{\sgn}{sgn}

\newcommand{\be}{\begin{equation}}
\newcommand{\ee}{\end{equation}}

%\newcommand{\red}[1]{{\textcolor{red}{#1}}} 
%\newcommand{\blue}[1]{{\textcolor{blue}{#1}}} 
% counter

\newcommand{\om}{\omega}

% user-defined theorem-like environments

% english
\theoremstyle{plain}
\newtheorem{thm}{Theorem}
\newtheorem*{thm*}{Theorem}
\newtheorem{prop}[thm]{Proposition}
\newtheorem*{prop*}{Proposition}
\newtheorem{lem}[thm]{Lemma}

\newtheorem{defn}[thm]{Definition}

\theoremstyle{plain}
\newtheorem{rem}{Remark}

\newcommand{\lessim}{\stackrel{<}{\sim}}

\newcommand{\ignore}[1]{}
\newcommand{\av}{-\hspace{-2.4ex}\int}
\newcommand{\del}{\delta\hspace{-0.2ex}}

% remark: proof environment already provided by amsthm

% opening
\title{The magnetization ripple: \\{ a nonlocal stochastic PDE perspective}
}
\author{
Radu Ignat\thanks{Institut de Math\'ematiques de Toulouse, Universit\'e Paul Sabatier, 31062 Toulouse, France (email: Radu.Ignat@math.univ-toulouse.fr)} \and Felix Otto\thanks{Max-Planck-Institut f\"ur Mathematik in den Naturwissenschaften, Inselstr. 22, 04103 Leipzig, Germany (email: Felix.Otto@mis.mpg.de)} 
}

\begin{document}
\maketitle

\begin{abstract}
The magnetization ripple is a microstructure formed by the magnetization in a thin-film
ferromagnet. It is triggered by the random orientation of the grains in the poly-crystalline
material. In an approximation of the micromagnetic model, which is sketched in this paper,
this leads to a nonlocal (and strongly anisotropic) elliptic equation in two dimensions
with white noise as a right hand side. 
However, like in singular Stochastic PDE, this right hand side is too rough for the non-linearity
in the equation. In order to develop a small-date well-posedness theory, we take inspiration from 
the recent rough-path approach to singular SPDE. To this aim, we develop a Schauder theory
for the non-standard symbol $|k_1|^3+k_2^2$.
\end{abstract}
\noindent{\scriptsize\textbf{Keywords:} {singular stochastic PDE, nonlocal elliptic PDE, Schauder regularity, anisotropic H\"older norm, semigroup, micromagnetics.}}\\
{\scriptsize\textbf{MSC:} 35R60,  35J60, 78A30, 82D40. 
}

\tableofcontents

\section{Introduction}

The magnetization ripple in a ferromagnetic thin-film sample
is the response to the polycrystallinity of the sample.
In experiments, it manifests itself as an in-plane oscillation
of the magnetization, predominantly in direction of the main (in-plane) magnetization direction.
The fact that the sample is made up of (comparatively small) randomly oriented grains
leads to an easy axis for the magnetization that is a random field and thus acts like quenched noise.
The {\it anisotropic} response of the magnetization $m$ to this isotropic quenched noise is a consequence
of the non-local interaction of the magnetization given by the stray-field energy.

\medskip

Starting from the three-dimensional micromagnetic (variational) model,
we heuristically derive a reduced model (see Section \ref{Der}) that zooms in on the (different) longitudinal and transversal 
characteristic length scales of the ripple. The reduced model is a two-dimensional, nonlocal variational model formulated in 
terms of the transversal magnetization component $m_2$. 
On these scales, the quenched noise acts like random transversal field
of white-noise character (because the grains are smaller than the characteristic scales).
We argue that this derivation is self-consistent (see also \cite[Section V]{PhyRevB}).

\medskip

There are two main challenges of the derived model that we heuristically point out in Section~\ref{Cha}.
The first challenge can already be seen on
the level of the simplification that gets rid of the anharmonic term. The ensuing linear
Euler-Lagrange equation can be explicitly solved in Fourier space --- however the highest-order
term in the energy, the exchange contribution, diverges (see Section \ref{sec:lin_en} in Appendix). Hence the ripple should
rather be analyzed on the level of the Euler-Lagrange equation than by the direct method
of the calculus of variations.

\medskip
 
The second challenge is more subtle and more serious: The nonlinearity in the Euler-Lagrange
equation is too singular for its right-hand side (RHS) given by white noise $\xi$. More dramatically, one of the
quadratic terms in the Euler-Lagrange equation cannot be given an unambiguous sense even
when one plugs in the solution of the linear Euler-Lagrange equation, which is well-characterized. 
This situation is similar to certain classes of
stochastic partial differential equation (SPDE), i.e., time-dependent nonlinear parabolic equations driven
by space-time white noise. While noise in SPDEs typically models {\it thermal} noise and our noise
is of {\it quenched} nature, and while these SPDEs are typically parabolic and our Euler-Lagrange
equation is of (non-local) elliptic character, the mathematical challenges are identical.

\medskip

In fact, the issue is to make sense of the product of a function and a distribution.
This can be done in an unambiguous sense provided the function is more regular than the distribution
is irregular. In order to make use of this, regularity has to be measured in a way that is consistent with
the (leading-order) linear part of the equation. While in the parabolic case, this requires
spaces that respect the fact that the time derivative is worth two space derivatives, in our
case we have the relationship that two $x_2$-derivates are worth three $x_1$-derivatives.
In the case of {\it stationary} (i.e. shift-invariant) driving noise, like is the case of white noise,
there is no loss in using the scale of H\"older spaces $C^\alpha$ with respect to (w.r.t.) a Carnot-Carath\'eodory
metric that respects the above scaling. On this scale, the crucial product turns out
to be border-line singular: The function in this product is slightly worse than $C^{\frac{3}{4}}$ while
the distribution is slightly worse than $C^{-\frac{3}{4}}$. 

\medskip

This situation is reminiscent of a fundamental problem in the theory of stochastic (ordinary)
differential equations (SDE): The theory requires at a minimum to give a (distributional) sense of the
product of (multi-dimensional) Brownian motion and of its derivative, i.e. white noise. Brownian motion
is known to be slightly worse than $C^{\frac{1}{2}}$ and thus white noise slightly worse than $C^{-\frac{1}{2}}$.
Stochastic analysis has found two ways out of this border-line singular situation: 
Ito calculus and more recently Lyons' rough path theory (see \cite{Lyons}).
While Ito calculus uses the Martingale structure of Brownian motion and thereby the time direction
and thus is not easily amenable to a treatment of irregular spatial noise, rough path theory is oblivious
to this structure. Hairer and coworkers have extended rough path theory from SDEs to 
SPDEs (see \cite{Hairer}). We follow this approach in our -- simpler -- situation.

\medskip

This approach consists of two parts:
The first part consists in giving an off-line definition to the singular product in the 
PDE (for $u$ with RHS given by white noise $\xi$)
when the solution $v$ of the linear (constant-coefficient) equation is plugged in. 
In this case, the singular product $F$ is the product of two {\it Gaussian} fields
and can be characterized by Gaussian calculus: Thanks to stochastic cancellations, an {\it almost-sure} unambiguous 
(distributional) sense can be given to this product that is stable under regularization of white noise by convolution.
This is carried out in Section \ref{Off}.

\medskip

The second part consists in setting up a completely {\it deterministic} (i.e. path-wise)
fixed-point problem in $w:=u-v$ with a RHS given by the distribution $F\in C^{-\frac{3}{4}-\eps}$ for every $\eps>0$. 
All the further non-linearities in the PDE are regular.
For this second part, we have to show that:
\begin{enumerate}
\item[1)] ${\mathcal L}^{-1}C^{\alpha-2}\subset C^\alpha$ for our (anisotropic and nonlocal) linear operator 
${\mathcal L}$ (see \eqref{w07}); 
\item[2)] $C^\alpha \cdot C^\beta\subset C^\beta$ for $\beta<0<\alpha$ with $\alpha+\beta>0$ (i.e. the regular case).
\end{enumerate}
We do both with help of a set of tools recently introduced for SPDEs (see \cite[Section 2]{OttoWeber}).

%%%%%%%%%%%%%%%%%%%%%%%%%%%%%%%%%%%%%%%%%%%%%%%%%%%%
%%%%%%%%%%%%%%%%%%%

\section{Derivation of the model}\label{Der}

In this section, we heuristically derive the model we shall analyze. Most
of the arguments can be found in the physics literature \cite{Harte,Hofmann}.
We closely follow the set-up in \cite{PhyRevB} based on \cite{SteinerDiss}.
{The more mathematically oriented readers could skip this section.}

The starting point is the micromagnetic model; in its stationary version, it predicts the magnetization $m$,
which locally indicates the orientation of the elementary magnets on a mesoscopic level, 
of a ferromagnetic sample as the ground state or at least local minimizer of a variational problem.
We present a version that is partially non-dimensionalized in the sense that the magnetization,
the fields and the energy density are non-dimensional, but length is still dimensional. 
The energy $E$ is the sum of the following four terms:
\begin{itemize}
\item The exchange contribution $d^2\int|\nabla m|^2\, \, dx$ models a short-range attractive interaction
of quantum mechanical origin, where the ``exchange length'' $d$ is a material parameter, typically in
the nanometer range. Since we are below the Curie temperature, physically speaking, this term comes
together with a spatially constant, non-vanishing length of the magnetization, which in our
non-dimensionalization turns into the unit-length (and thus non-convex) constraint $|m|^2=1$.
\item The second contribution is the energy $\int|h|^2\, \, dx$  of the stray field $h$, which is determined through $m$ by 
the static Maxwell equations $\nabla\cdot(h+m)=0$ and $\nabla\times h=0$. For later purpose, it is convenient
to think of $h$ as a field to minimize subject to the sole constraint $\nabla\cdot(h+m)=0$.
\item The third contribution is the Zeeman term $-2\int H_{ext}\cdot m \, \, dx$ that models the interaction 
of the magnetization with the external field $H_{ext}$.
\item The fourth term is the crystalline anisotropy $-Q\int (e\cdot m)^2\, \, dx$, where the ``quality factor'' $Q$ is a
non-dimensional material parameter; for $Q>0$ it favors the alignment of the magnetization with
the ``easy axis'' $e$ ($e$ is a unit vector); a ferromagnet is called soft when $Q>0$ is small.
\end{itemize}
We are interested in a sample that comes in form of a thin film of thickness $t$, typically in the
range of tens or hundreds of nanometers. {For simplicity, we think of an infinitely extended ferromagnetic film and thus disregard boundary effects}. Clearly, the exchange, Zeeman, and anisotropy contributions
are restricted to the sample, whereas the stray-field energy still is to be taken over the entire space.
Also the constraint $\nabla\cdot(h+m)=0$ has to be imposed in all of space, with {\bf $m$ extended by zero
outside the sample}, and thus has to be interpreted in the distributional sense:
\begin{align*}
\int_{\mbox{space}}\nabla\zeta\cdot h\, \, dx={-}\int_{\mbox{sample}}\nabla\zeta\cdot m\, \, dx\quad\mbox{for all test functions}
\;\zeta.
\end{align*}
In line with this geometry, we think of an in-plane external field $H_{ext}$. We choose the coordinate
system such that the $x_3$-axis is the thickness direction and the $x_1$-axis the direction of the external
field, which thus assumes the form $H_{ext}=(h_{ext},0,0)$ with $h_{ext}>0$.

\medskip

We are further interested in a polycrystalline sample: The sample is formed by single-crystal grains of a given
easy axis, we assume that the grains' orientation is independently and uniformly distributed, which 
transmits to the easy axis. In other words, $e$ is a random field. In the absence of the anisotropy, the
minimizer would be given by $m=(1,0,0)$ and $h=0$ (recall our simplified setting of an infinitely extended ferromagnetic film) 
\footnote{In order to speak of a global minimizer, it is convenient to pass to a periodic setting in both in-plane directions.} which annihilates exchange, stray field, and Zeeman
contributions. The heterogeneous anisotropy energy creates a torque that perturbs this magnetization.

\medskip

We now start with a couple of model reductions. The thin-film geometry allows for two simplifications:
In conjunction with the exchange energy, one may assume that $m$ only depends on the in-plane
variables $x'=(x_1,x_2)$, i.e., $m=m(x')$. {\bf In the following, the prime $'$ always denotes an in-plane quantity}. In conjunction with the stray-field energy, one may assume that the
$m_3$-component, which generates a surface ``charge'' density at the bottom and top surfaces and thus
generates a stray field, is suppressed, i.e., $m_3=0$. Hence the energy reduces to
\begin{align*}
d^2t&\int|\nabla'm'|^2\, \, dx'+\int|h|^2\, \, dx\nonumber -Qt \int m'\cdot (\av_0^t e'\otimes e'\, \, dx_3) m'\, \, dx'
-2h_{ext}t\int m_1\, \, dx'
\end{align*}
under the constraints $|m'|^2=1$ (where $\displaystyle \av_0^t=\frac1t\int_0^t$) and 
\begin{align*}
\int\nabla\zeta\cdot h\, \, dx=t\int \zeta(\cdot, x_3=0) \, \nabla'\cdot m' \, \, dx'\quad\mbox{for all}\;\zeta.
\end{align*}

We now can already explain the anisotropic response of the magnetization to the heterogeneous anisotropy energy
by considering oscillations of the (in-plane unit-length) magnetization $m'\approx (1,0)$ with wave number $k'$. 
If $k'$ is aligned with the $k_1$-axis, such an oscillation is divergence-free to leading order; in all
other cases, it generates a substantial stray-field and thus is penalized not only by exchange but also 
the stray-field energy. This is indeed confirmed
by experiments: The ripple is predominantly an oscillation in direction of the average magnetization
(and thus helps to visualize the latter in Kerr microscopy). 

\medskip

Based on this discussion, we make the assumption that the typical $x_1$-scale $\ell_1$ of the ripple 
is much smaller than the typical $x_2$-scale $\ell_2$, which will be seen to be self-consistent in a
relevant regime. This assumption means that $\partial_2m'$ is dominated by $\partial_1m'$
and that $h_2$ is more strongly suppressed than $h_1$; hence the exchange energy simplifies to
$d^2 t\int|\partial_1m'|^2\, \, dx'$ and the stray field energy to $\int h_1^2+h_3^2\, \, dx$ under the
constraint 
\begin{align*}
\int\partial_1\zeta h_1+\partial_3\zeta h_3\, \, dx=t\int\zeta(\cdot, x_3=0)\,  \nabla'\cdot m'\, \, dx'\quad\mbox{for all}\;\zeta.
\end{align*}
Note that the $x_2$ variable appears just as a parameter when passing from $\nabla'\cdot m'$ to $h$; taking the Fourier transform in $x_1$
one sees that the stray field energy takes the form of 
$\frac{1}{2}t^2\int(|\partial_1|^{-\frac{1}{2}}\nabla'\cdot m')^2 \, \, dx'$, where the fractional inverse
derivative $|\partial_1|^{-\frac{1}{2}}$ is defined as coming from the Fourier multiplier 
$|k_1|^{-\frac{1}{2}}$. This type of scaling of the stray field energy appears in many studies of thin ferromagnetic films 
\cite{CO1, CO2, COS, DKO, DKMO, IK, IM1, IM2, IO1, IO2, M1, OS} where new mathematical tools are developed in order to understand the structure of domain walls such as N\'eel walls, concertina pattern, Landau state etc. (for more details, see the review paper \cite{DKMO1}).
\medskip

A further reduction is based on the assumption that the amplitude of the
ripple is small in the sense of $|m'-(1,0)|\ll 1$, \footnote{We denote in this section $a\ll b$ if $a/b\to 0$. } 
which also will be seen to be self-consistent. Based on this assumption,
and because of the constraint $|m'|^2=1$, we use $m_1\approx 1-\frac{1}{2}m_2^2$ in the stray-field
and Zeeman contributions and $m_1\approx 1$ in the exchange and anisotropy contributions which leads
to
\begin{align}\label{i1}
d^2t&\int(\partial_1m_2)^2\, \, dx'+\frac{1}{2}t^2\int(|\partial_1|^{-\frac{1}{2}}
(\partial_2m_2-\partial_1\frac{1}{2}m_2^2))^2\, \, dx'\nonumber\\
-2Qt&\int(\av_0^te_1e_2\, \, dx_3)m_2\, \, dx'
+h_{ext}t\int m_2^2\, \, dx'
\end{align}
where we neglected an additive constant independent of the configuration.
From (\ref{i1}) we learn that the crystalline anisotropy acts like the random transversal external field
$h_{ran}(x'):=Q\av_0^t(e_1e_2)(x)\, \, dx_3$ in $x_2$-direction.

\medskip

We finally turn to an (asymptotic) stochastic characterization of this random field $h_{ran}$.
Since the orientation of the grains is uniform, we have
\begin{align}\label{i2}
\langle h_{ran}(x')\rangle=0,
\end{align}
where $\langle\cdot\rangle$ stands for the expectation, or ensemble average in the physics jargon.
Assuming that the (average) size $\ell$ of the grains is not much smaller than the thickness $t$ of
the sample (in the real samples we have in mind, $\ell$ is also in the range of tens of nanometers),
taking the vertical average in $h_{ran}$ does not lead to full cancellation so that we have
\begin{align}\label{i3}
\langle h_{ran}^2(x')\rangle\sim Q^2.
\end{align}
Since the orientation is independent from grain to grain we have
for the covariance (which in view of (\ref{i2}) reduces to a second moment)
\begin{align}\label{i4}
\langle h_{ran}(x')h_{ran}(y')\rangle=0\quad\mbox{for}\;|x'-y'|\gg\ell.
\end{align}
Under the assumption that the grain size is small compared to the typical length scale of the ripple,
that is, under the assumption $\ell\ll\ell_1,\ell_2$, the random field $h_{ran}$ acts on the
magnetization $m_2$ as if (\ref{i3}) and (\ref{i4}) were consolidated into
\begin{align}\label{i5}
\langle h_{ran}(x')h_{ran}(y')\rangle\sim Q^2\ell^2\delta(x'-y'),
\end{align}
where here, $\sim$ means that the Dirac distribution $\delta$ is multiplied by a fixed constant 
that is of order $Q^2\ell^2$, where $\ell^2$ comes from the (average) area of the grains.
Now (\ref{i2}) and (\ref{i5}) means that
\begin{align*}
h_{ran}\approx Q\ell\xi\quad\mbox{with}\quad\xi\;\mbox{white noise};
\end{align*}
{for simplicity, we will set the above constant to unity in the sequel.}
Hence (\ref{i1}) turns into our final form 
\begin{align}\label{i6}
t\Big(d^2&\int(\partial_1m_2)^2\, \, dx'+\frac{1}{2}t\int(|\partial_1|^{-\frac{1}{2}}
(\partial_2m_2-\partial_1\frac{1}{2}m_2^2))^2\, \, dx'\nonumber\\
-2Q\ell&\int\xi m_2\, \, dx'+h_{ext}\int m_2^2\, \, dx'\Big).
\end{align}

\medskip

We now pass to suitable reduced units in (\ref{i6}) (which also amounts to
a non-dimensionalization of length) which has the merit of: 
\begin{enumerate}
\item[1)] getting rid of the
various parameters in (\ref{i6});  
\item[2)] revealing in which regime our assumptions
$\ell\ll\ell_1\ll\ell_2$ and $|m_2|\ll 1$ are self-consistent.
\end{enumerate}
We start with length and, in line with our interpretation of $\ell_1$ and $\ell_2$, we make
the (anisotropic) Ansatz $x_1=\ell_1\hat x_1$ and $x_2=\ell_2\hat x_2$. We'd like to choose
$\ell_1$ and $\ell_2$ such that the energy densities of exchange, Zeeman, and the harmonic part
of the stray field contributions, that is,
\begin{align*}
d^2(\partial_1m_2)^2,\quad\frac{{t}}{2}(|\partial_1|^{-\frac{1}{2}}\partial_2m_2)^2,\quad h_{ext}m_2^2,
\end{align*}
balance. This is achieved for
\begin{align*}
\ell_1=h_{ext}^{-\frac{1}{2}}d,\quad \ell_2=h_{ext}^{-\frac{3}{4}}(t\, d)^\frac{1}{2},
\end{align*}
which is consistent with $\ell\ll\ell_1\ll\ell_2$ provided the stabilizing external field $h_{ext}$
that sets the predominant direction is sufficiently small. We now turn to the reduced transverse
magnetization $m_2$, which we choose such as to balance the two terms in the stray-field energy
$\partial_2m_2-\partial_1\frac{1}{2}m_2^2$ 
$=\ell_2^{-1}\hat\partial_2m_2-\ell_1^{-1}\hat\partial_1\frac{1}{2}m_2^2$, which is achieved for
$m_2=\frac{\ell_1}{\ell_2}\hat m_2$.  Again, this is self-consistent with our small-amplitude
assumption as soon as $\ell_1\ll \ell_2$. Taking 
$h_{ext}^\frac{3}{2}\frac{d}{t}$ as the reduced unit for the energy (area) density, we end up with
\begin{align}\label{i8}
&\int(\hat\partial_1\hat m_2)^2d\hat x'+\int(|\hat\partial_1|^{-\frac{1}{2}}
(\hat\partial_2\hat m_2-\hat\partial_1\frac{1}{2}\hat m_2^2))^2d\hat x'
-2\sigma\int\hat\xi \hat m_2d\hat x'+\int \hat m_2^2d\hat x'
\end{align}
with $\sigma:={h_{ext}^{-5/8}}Qd^{-\frac{5}{4}}\ell t^\frac{1}{4}$ is the renormalized strength of the
transverse field of white-noise character $\hat\xi$. Here we used that the distribution of
white noise is invariant under $\xi=\frac{1}{\sqrt{\ell_1\ell_2}}\hat\xi$ (since the Dirac distribution
that characterizes its covariance scales with $\frac{1}{\ell_1\ell_2}$). We note that both conditions $\ell\ll\ell_1\ll\ell_2$ and $\sigma\ll 1$ are satisfied
for a wide range of stabilizing external fields for typical material parameters. \footnote{{For Permalloy thin films, we have $d=5nm$, $Q=2.5\times 10^{-4}$ with a typical thickness
of $t=100nm$ and a grain size $\ell=20nm$, so that the conditions $\ell\ll\ell_1\ll\ell_2$ and $\sigma\ll 1$ are equivalent with a choice of
$Q^{8/5}(\ell/d)^{8/5}(t/d)^{2/5}\sim 5\times 10^{-5}\ll h_{ext} \ll 6\times 10^{-2}\sim \min\{(t/d)^2, (d/\ell)^2\}$. }} (The model \eqref{i8} was rigorously deduced via 
$\Gamma$-convergence in \cite{COS}, see also \cite{OS, PhyRevB}).

\medskip

\nd {\bf Our reduced (non-dimensionalized) model}. We drop the hats in (\ref{i8}) and make one last, purely mathematically motivated, simplification.
As we shall see, the main challenge in the model lies in the fact that the white noise 
triggers {\it small} scales of $m_2$. The zero-order term $\int m_2^2\, \, dx'$ coming from
the Zeeman contribution does not much affect the small scales. Rather,
this term penalizes scales of $m_2$ much larger than one. 
For mathematical convenience, we replace this mechanism 
by another mechanism of the same effect, namely periodic boundary conditions with period one, that is, 
\begin{align}\label{i9}
m_2(x_1+1,x_2)=m_2(x_1,x_2+1)=m_2(x_1,x_2),
\end{align}
and drop the last term
\begin{align}\label{i10}
\int_{[0,1)^2}(\partial_1 m_2)^2\, \, dx'+\int_{[0,1)^2}(|\partial_1|^{-\frac{1}{2}}
(\partial_2m_2-\partial_1\frac{1}{2} m_2^2))^2\, \, dx'-2\sigma\int_{[0,1)^2}\xi m_2\, \, dx'.
\end{align}
This being said,
our results would remain valid, and the proofs become only slightly more involved, when keeping the
zero-order term $\int m_2^2\, \, dx'$ alongside the periodic boundary conditions. However, it would require
new arguments to get rid of the (artificial) periodic boundary conditions. This is a well-known
effect when dealing with (quenched or thermal) noise: even if scaled by a small constant, it will
be almost surely somewhere too large in the infinite plane for our arguments.

\medskip

The periodic boundary conditions (\ref{i9}) in conjunction with the (reduced) stray field
contribution in (\ref{i10}) lead to a new constraint. The stray field contribution is only
finite provided the expression under the inverse fractional operator
$|\partial_1|^{-\frac{1}{2}}$ has vanishing average in $x_1$ for all $x_2$. Because of the
periodic boundary conditions, the second contribution $\partial_1\frac{1}{2}m_2^2$ has
vanishing average in $x_1$ so that we need $\partial_2m_2$ to have vanishing average in $x_1$,
which means that $\int_0^1m_2\, \, dx_1$ does not depend in $x_2$. We impose something slightly stronger, namely
\begin{align*}
\int_0^1m_2\, \, dx_1=0\quad\mbox{for all}\;x_2.
\end{align*}

\smallskip

\section{Main results}
\label{Cha}

\smallskip

For the sake of a simple notation, we replace $x'=(x_1,x_2)$ by $x$ and $m_2$ by $u$, so that the configurations $u$ 
are $1$-periodic functions in both variables and $\int_0^{1} u(x_1,x_2)\, dx_1=0$ for all $x_2\in (0,1)$. 
Besides the Schwartz test functions (that are defined on the full space $\R^2$),
all other functions and distributions are periodic w.r.t. the two-dimensional torus $[0,1)^2$.

We will focus on the (formal) Euler-Lagrange equation of functional \eqref{i10}:
\begin{align}\label{w01}
(-\partial_1^2-|\partial_1|^{-1}\partial_2^2)u+P(u\partial_2Ru)+\frac{1}{2}\partial_2 Ru^2-\frac{1}{2}P(u\partial_1Ru^2)=\sigma P\xi,
\end{align}
with (periodic) white noise $\xi$ and a small constant $\sigma>0$, where $P$ is the $L^2$-orthogonal projection onto the set of functions of vanishing 
average in $x_1$ (extended in the natural way 
to periodic distributions) and $R$ is the Hilbert transform acting on $1$-periodic functions $f$ in $x_1$ direction as
$$R:=\frac{\partial_1}{|\partial_1|}, \quad \textrm{i.e.,} \quad Rf(k_1)=\begin{cases} i \sgn(k_1) f(k_1) & \quad k_1\in 2\pi \Z\setminus \{0\},\\
0 & \quad k_1=0,
\end{cases}
$$
where the Fourier coefficients of $f$ are $f(k_1)=\int_0^1 e^{-ik_1x_1} f(x_1)\, dx_1$ for $k_1\in 2\pi \Z$ and $\sgn$ is the signum function. In particular, $RP=PR=R$.

The functional framework is given by anisotropic H\"older spaces. More precisely, the leading-order operator $-\partial_1^2-|\partial_1|^{-1}\partial_2^2$ in \eqref{w01} suggests to endow space ${[0,1)^2}$ with
a (Carnot-Carath\'eodory~-~)~metric that is homogeneous w.r.t. the scaling 
$(x_1,x_2)=(\ell \hat x_1,\ell^\frac{3}{2}\hat x_2)$. The simplest expression is given by
\begin{align*}
d(x,y):=|x_1-y_1|+|x_2-y_2|^\frac{2}{3},\quad x, y\in {[0,1)^2},
\end{align*}
which in particular means that we take the $x_1$-variable as a reference.
We now introduce the scale of H\"older semi-norms based on the distance function $d$,
where we restrict ourselves to the range $\alpha\in(0,\frac{3}{2})$ needed in this work. 

\medskip

\begin{defn}\label{definitionCalpha}
For a {periodic} function $f$, we denote by $\|f\|=\sup_{x} |f(x)|$ the supremum norm of $f$. For an exponent $\alpha\in(0,\frac{3}{2})$, we define
$$
{[f]_\alpha}
:=\sup_{x\not=y}\frac{1}{d^\alpha(y,x)}
\left\{\begin{array}{lc}
|f(y)-f(x)|&\mbox{for}\;\alpha\in(0,1],\\
|f(y)-f(x)-\partial_1f(x)(y_1-x_1)|&\mbox{for}\;\alpha\in(1,\frac{3}{2})
\end{array}\right\}.
$$
We denote by $C^\alpha$ the space of {periodic} functions $f$ with $[f]_\alpha<\infty$.
\end{defn}

\medskip

Our main result, Theorem \ref{T1}, starts from the Euler-Lagrange equation \eqref{w01} with periodic white noise $\xi$ replaced by its convolution $\xi_\ell:=\phi_\ell*\xi$, where 
\begin{align}\label{f37}
\phi_\ell(x_1,x_2):=\frac{1}{\ell^{\frac{5}{2}}}\phi(\frac{x_1}{\ell},\frac{x_2}{\ell^\frac{3}{2}}), \quad x\in \R^2,
\end{align}
and $\phi$ is some symmetric Schwartz function with $\int_{\R^2}\phi\, dx=1$. 
This approximation is natural in view of the heuristic derivation of the equation.
Provided $\sigma>0$ is sufficiently small, Theorem \ref{T1} ascertains a small solution $u^\ell$ and monitors its
distance to the solution of the linear problem. Moreover, the latter is given by $\sigma v_\ell$,
where $v_\ell:=\phi_\ell*v$ is the mollification of the solution $v$ {of vanishing average in $x_1$} of the linearized equation:
\be
\label{w07}
\LL v:=(-\partial_1^2-|\partial_1|^{-1}\partial_2^2)v=P\xi\quad\mbox{in a distributional sense}.
\ee

\begin{thm}\label{T1}
Fix an $\eps\in(0,\frac{1}{4})$, which we think of being small. 
Let $\xi$ be distributed like white noise on the torus $[0,1)^2$ under the expectation $\langle\cdot\rangle$.
Then there exists a deterministic constant $C<\infty$ and a random constant $\sigma_0>0$
with $\langle\sigma_0^{-p}\rangle<\infty$ for all $1\leq p<\infty$ which is a threshold
in the following sense: Provided $\sigma\in[0,\sigma_0]$ and for every $0<\ell\le 1$, there exists a unique smooth and periodic $u^\ell$ 
of vanishing average in $x_1$ such that 
\begin{align*}
\LL u^\ell
+P(u^\ell \partial_2 Ru^\ell)+\frac{1}{2}\partial_2R(u^\ell)^2-\frac{1}{2}P(u^\ell \partial_1R(u^\ell)^2)=\sigma P\xi_\ell
\end{align*}
and that is small enough in the sense of
\begin{align*}
[u^\ell-\sigma v_\ell]_{\frac{5}{4}-\eps}\le C\left(\frac\sigma{\sigma_0}\right)^2.
\end{align*}
Moreover, as $\ell\downarrow 0$, $u^\ell$ converges {in $C^{3/4-\eps}$} to a limit that is independent of the choice of the 
(symmetric) convolution kernel $\phi$.
\end{thm}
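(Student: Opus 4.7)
The plan is to recast the equation in the remainder $w^\ell:=u^\ell-\sigma v_\ell$ and solve it by a deterministic contraction mapping in the anisotropic H\"older space $C^{5/4-\eps}$. Since $\LL(\sigma v_\ell)=\sigma P\xi_\ell$, substituting $u^\ell=\sigma v_\ell+w^\ell$ makes the two $\sigma P\xi_\ell$ terms cancel, leaving the deterministic equation
\begin{align*}
\LL w^\ell=-P(u^\ell\partial_2 Ru^\ell)-\tfrac{1}{2}\partial_2 R(u^\ell)^2+\tfrac{1}{2}P(u^\ell\partial_1R(u^\ell)^2)\Big|_{u^\ell=\sigma v_\ell+w^\ell}.
\end{align*}
Expanding in $(\sigma v_\ell,w^\ell)$, every resulting product has a strictly positive exponent sum and is covered by the regular product rule 2), with one exception: the critical quadratic $\sigma^2 P(v_\ell\partial_2 Rv_\ell)$, whose factors sit at the borderline pair $C^{3/4-\eps}\cdot C^{-3/4-\eps}$. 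Here I invoke the off-line analysis of Section~\ref{Off} to supply an almost-sure, kernel-independent distribution $F\in C^{-3/4-\eps}$ with $P(v_\ell\partial_2 Rv_\ell)\to F$ in $C^{-3/4-\eps}$, together with a random constant $\sigma_0>0$ satisfying $\langle\sigma_0^{-p}\rangle<\infty$ for every $p$ and controlling both $[P(v_\ell\partial_2 Rv_\ell)]_{C^{-3/4-\eps}}$ and $[v_\ell]_{3/4-\eps}$ by $\sigma_0^{-1}$ uniformly in $\ell$.

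I then define $T^\ell(w):=\LL^{-1}\Psi^\ell(w)$, where $\Psi^\ell(w)$ is the above RHS seen as a polynomial in $w$ with $v_\ell$-dependent coefficients. By the Schauder estimate 1) it suffices to land $\Psi^\ell(w)$ in $C^{-3/4-\eps}$. Chasing exponents in the anisotropic scale (in which $\partial_1$ loses one derivative and $\partial_2$ loses $3/2$): $P(w\partial_2 Rv_\ell)$ has exponent sum $5/4+(-3/4)-2\eps>0$ and lies in $C^{-3/4-\eps}$ by rule 2); $P(v_\ell\partial_2 Rw)$ lies in $C^{-1/4-\eps}$; the terms $\partial_2 Rv_\ell^2$, $\partial_2 R(v_\ell w)$, $\partial_2 Rw^2$ come from pointwise products of H\"older functions followed by the $3/2$-loss operator $\partial_2 R$ and lie in $C^{-3/4-\eps}$ or better; the cubic $\partial_1 R$-terms lose only one derivative and are strictly more regular. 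Combining yields
\begin{align*}
[T^\ell(w)]_{5/4-\eps}\lesssim \sigma^2\sigma_0^{-2}+\sigma\sigma_0^{-1}[w]_{5/4-\eps}+[w]_{5/4-\eps}^2+\text{higher order},
\end{align*}
with a matching Lipschitz estimate for $T^\ell(w_1)-T^\ell(w_2)$. On the event $\sigma\le\sigma_0$, the factor $\sigma\sigma_0^{-1}\le 1$ is small after absorbing a harmless deterministic constant into the threshold, making $T^\ell$ a contraction on the ball $\{[w]_{5/4-\eps}\le C(\sigma/\sigma_0)^2\}$. The unique fixed point is $w^\ell$; smoothness of $u^\ell$ for fixed $\ell>0$ then follows by bootstrapping on the smoothness of $\xi_\ell$.

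For the limit $\ell\downarrow 0$, the map $T^\ell$ depends continuously on the data $(v_\ell,P(v_\ell\partial_2 Rv_\ell))$, uniformly in $w$ on the contraction ball. Since $v_\ell\to v$ in $C^{3/4-\eps}$ and $P(v_\ell\partial_2 Rv_\ell)\to F$ in $C^{-3/4-\eps}$ almost surely, a standard contraction-with-parameters argument yields $w^\ell\to w$ in $C^{5/4-\eps}$, hence $u^\ell\to\sigma v+w$ in $C^{3/4-\eps}$; kernel-independence of the limit is inherited from that of $F$. The main obstacle I anticipate is step 1): proving $\LL^{-1}\colon C^{\alpha-2}\to C^\alpha$ for the non-standard nonlocal anisotropic symbol $|k_1|^2+|k_1|^{-1}k_2^2$ within the Carnot--Carath\'eodory H\"older scale of Definition~\ref{definitionCalpha} is exactly the Schauder theory the abstract promises. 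It requires constructing an associated semigroup for $\LL$ and establishing Gaussian-type decay of its kernel in the intrinsic metric $d$, which is where the tools of \cite{OttoWeber} are deployed.
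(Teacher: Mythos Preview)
Your proposal is correct and follows essentially the same route as the paper: the decomposition $u^\ell=\sigma v_\ell+w^\ell$, the off-line stochastic input $F^\ell=P(v_\ell\partial_2 Rv_\ell)$ from Lemma~\ref{L5}, and a deterministic contraction in $C^{5/4-\eps}$ built on the Schauder estimate of Lemma~\ref{L2} and the regular product of Lemma~\ref{L3} are exactly what the paper packages into Proposition~\ref{Pfix}. The only cosmetic difference is that the paper works in $C^{5/4-2\eps}$ rather than $C^{5/4-\eps}$ to cleanly absorb the $\eps$-loss from the Hilbert transform (Lemma~\ref{L4}), and it makes the definition of $\sigma_0$ explicit via the inequality $\sigma_0[v_\ell]_{3/4-\eps}+\sigma_0^2[F^\ell]_{-3/4-\eps}\le 1/C$, which is precisely your ``absorbing a harmless deterministic constant into the threshold.''
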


The last sentence of Theorem \ref{T1} is the most important: The limit of (small) solutions with regularized noise
is independent of the regularization. In fact, the limit can be characterized as a solution of the 
limiting Euler-Lagrange equation as we shall do in Theorem \ref{T2}. However, giving a rigorous meaning to this equation requires some effort
as we shall explain now, also putting light behind the exponents $\frac{3}{4}$ and $\frac{5}{4}$. 
As is usual, the amount of irregularity of white noise as a distribution depends on the space dimension; the rule of
thumb is that the order is just below $-\frac{1}{2}\times\mbox{effective dimension}$. In view of our anisotropic
metric $d$, the effective dimension is $1+\frac{3}{2}=\frac{5}{2}$. Hence we expect $\xi$ to be a distribution
of order just below $-\frac{5}{4}$, which is indeed true on the level of the following H\"older spaces of negative exponents;  
we will restrict to the range we require in this
work, namely $\beta\in(-\frac{3}{2},0)$.

\smallskip

\begin{defn}\label{definitionCbeta}
Let $f$ be a periodic distribution. In case of $\beta\in(-1,0)$ we set
\begin{align*}
[f]_\beta:=\inf\{|c|+[g]_{\beta+1}+[h]_{\beta+\frac{3}{2}}\, :\, f=c+\partial_1g+\partial_2h\}
\end{align*}
and in case of $\beta\in(-\frac{3}{2},-1]$ we set
\begin{align*}
[f]_\beta:=\inf\{|c|+[g]_{\beta+2}+[h]_{\beta+\frac{3}{2}}\, :\, f=c+\partial_1^2g+\partial_2h\}.
\end{align*}
\end{defn}
In both cases, the expressions are interpreted as $+\infty$ if the distribution $f$ does not allow
for a representation in terms of two periodic functions $g$ and $h$ and a constant $c$. We denote by $C^\beta$ the space of periodic distributions $f$ with $[f]_\beta<\infty$. 
We now state the regularity of the white noise:

\smallskip

\begin{lem}\label{L1}
We have for all $1\leq p<\infty$ and $0<\eps<\frac14$, \footnote{Here and in the sequel, $a \lessim b$ means $a\leq Cb$ with a generic constant $C>0$ that depends on the exponents in the statement of the respective result, e.g., $p$ and $\eps$ (but not $\ell_0$) in case of Lemma \ref{L1}. }
\begin{align*}
 \langle\sup_{0<\ell\le 1}[P\xi_\ell]_{-\frac{5}{4}-\eps}^p\rangle\lesssim 1\quad\mbox{and}\quad
  \langle\sup_{0<\ell\le\ell_0}[\xi_\ell-\xi]_{-\frac{5}{4}-\eps}^p\rangle^{1/p}\lesssim {\ell_0^{\eps/2}} \quad \textrm{ for } \ell_0\leq 1.
\end{align*}
\end{lem}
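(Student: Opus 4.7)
\emph{Representation.} Since $0 < \eps < \tfrac14$ implies $-\tfrac54 - \eps \in (-\tfrac32, -1)$, Definition~\ref{definitionCbeta} calls for a decomposition $f = c + \partial_1^2 g + \partial_2 h$. The natural choice arises from the linear equation itself: as Fourier multipliers commute with convolution, applying $\phi_\ell *$ to $\LL v = P\xi$ yields $\LL v_\ell = P\xi_\ell$ with $v_\ell := \phi_\ell * v$, and hence
\[
P\xi_\ell \;=\; \partial_1^2(-v_\ell) \;+\; \partial_2\bigl(-|\partial_1|^{-1}\partial_2 v_\ell\bigr).
\]
This reduces the first statement to uniform-in-$\ell$ bounds $\langle [v_\ell]_{3/4-\eps}^p\rangle \lesssim 1$ and $\langle [|\partial_1|^{-1}\partial_2 v_\ell]_{1/4-\eps}^p\rangle \lesssim 1$. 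The pathwise estimate $[\phi_\ell * u]_\alpha \le \|\phi\|_{L^1}[u]_\alpha$ disposes of the $\sup_\ell$ and reduces matters to the limiting fields $v$ and $|\partial_1|^{-1}\partial_2 v$. The second statement is handled by the analogous decomposition for $\xi_\ell - \xi$; the $k_1 = 0$ modes of $\xi$ (suppressed by $P$ but present in $\xi$) reduce to a one-dimensional white noise in $x_2$ whose primitive is considerably smoother in $d$ than the threshold $\tfrac14 - \eps$ and is harmless.

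\emph{Gaussian moments via Fourier scaling.} Both $v$ and $|\partial_1|^{-1}\partial_2 v$ are centered stationary Gaussian fields; by hypercontractivity all moments of an increment are governed by the second moment, and Kolmogorov's continuity theorem in the metric $d$ (of effective dimension $1 + \tfrac32 = \tfrac52$) reduces the task to
\[
\langle |v(y) - v(x)|^2\rangle \lesssim d(y,x)^{3/2}, \qquad \langle |(|\partial_1|^{-1}\partial_2 v)(y) - (|\partial_1|^{-1}\partial_2 v)(x)|^2\rangle \lesssim d(y,x)^{1/2}.
\]
These are verified by scaling in Fourier: for $y - x = (h, 0)$, the substitution $k_1 = p_1/|h|$, $k_2 = p_2/|h|^{3/2}$ scales the weight $k_1^2/(|k_1|^3 + k_2^2)^2$ by $|h|^4$ against the Jacobian $|h|^{-5/2}$, producing the factor $|h|^{3/2} = d(y,x)^{3/2}$; the residual integral is uniformly finite thanks to $|e^{ip_1} - 1|^2 \lesssim \min(p_1^2, 1)$. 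The $(0,h)$ direction and the $|\partial_1|^{-1}\partial_2 v$ component are handled identically, yielding $|h| = d(y,x)^{3/2}$ and exponent $\tfrac12$ respectively.

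\emph{Rate of convergence.} For $v_\ell - v$, the Fourier representation now carries the additional factor $|\hat\phi(\ell k_1, \ell^{3/2} k_2) - 1|^2$. Symmetry and unit mass of $\phi$ force $\nabla \hat\phi(0) = 0$, hence $|\hat\phi(\tilde k) - 1|^2 \lesssim \min(|\tilde k|^4, 1)$. Splitting the Fourier sum at $|(\ell k_1, \ell^{3/2} k_2)| \sim 1$ and inserting this bound produces the pointwise variance estimate $\langle (v_\ell - v)^2\rangle \lesssim \ell^{3/2}$. Combined with the trivial $\langle |(v_\ell - v)(y) - (v_\ell - v)(x)|^2\rangle \lesssim d(y,x)^{3/2}$ (from the individual bounds on $v$ and $v_\ell$), geometric interpolation with weight $\theta = 2\eps/3$ yields
\[
\langle |(v_\ell - v)(y) - (v_\ell - v)(x)|^2\rangle \;\lesssim\; \ell^{\eps}\, d(y,x)^{3/2 - \eps},
\]
whence Kolmogorov (with $p \gtrsim 1/\eps$) delivers $\langle [v_\ell - v]_{3/4 - \eps}^p\rangle^{1/p} \lesssim \ell^{\eps/2}$; the $|\partial_1|^{-1}\partial_2$-component is handled identically. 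A dyadic chaining in $\ell$, exploiting the smoothness of $\ell \mapsto v_\ell$ in $C^{3/4-\eps}$, upgrades this to the $\sup_{\ell \le \ell_0}$ estimate. I expect the main obstacle to lie in the sharp interpolation --- the pointwise exponent $\tfrac32$ is exactly the minimum for which the argument closes --- together with the uniform bookkeeping needed to pass from pointwise-in-$\ell$ to $\sup_\ell$ moment bounds; the Fourier scalings themselves are routine.
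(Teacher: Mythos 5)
Your proposal is correct in outline and takes a genuinely different route than the paper. The paper works \emph{directly} at the level of the distribution: it computes second moments of the Fourier coefficients $\xi_\ell(k)$ and of the logarithmic derivative $\ell\partial_\ell\xi_\ell(k)$, passes via Plancherel and a dyadic-annulus count (effective dimension $\tfrac52$) to a pointwise bound $\langle|\xi_{\ell,T}(x)|^2\rangle\lesssim (T^{1/3})^{-5/2}$, upgrades this to $L^p$ and then to sup-norms of $\xi_{\ell,T}$ via Gaussian hypercontractivity and an ``inverse estimate'' from the semigroup, handles $\sup_\ell$ by integrating $\|\partial_\ell\xi_{\ell,T}\|$, handles $\sup_T$ by chaining over dyadic $T$, and finally converts to $[\cdot]_{-5/4-\eps}$ by the equivalence of Lemma~\ref{Lequiv}. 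You instead push the negative exponent onto a \emph{positive}-exponent field by feeding the linear equation back in: writing $P\xi_\ell=\partial_1^2(-v_\ell)+\partial_2(-|\partial_1|^{-1}\partial_2 v_\ell)$ and invoking Definition~\ref{definitionCbeta}, the task becomes a pathwise H\"older bound on $v_\ell$ and $|\partial_1|^{-1}\partial_2 v_\ell$, which you establish by a Kolmogorov/Garsia--Rodemich--Rumsey argument in the metric $d$, after verifying the variance bounds $\langle|v(y)-v(x)|^2\rangle\lesssim d(y,x)^{3/2}$ and $\langle|(|\partial_1|^{-1}\partial_2 v)(y)-\cdots|^2\rangle\lesssim d(y,x)^{1/2}$ by Fourier scaling. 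This is a perfectly valid and arguably more classical route, and it has a tidy bonus: since $[\phi_\ell*u]_\alpha\le\|\phi\|_{L^1}[u]_\alpha$ for $\alpha\le 1$, the $\sup_{\ell\le 1}$ in the \emph{first} estimate comes for free pathwise, avoiding the paper's integration of the $\ell$-derivative there.

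Two places deserve more care than your sketch gives them. First, the $\sup_{\ell\le\ell_0}$ in the \emph{second} estimate does not come for free (you are subtracting $\xi$), and ``dyadic chaining in $\ell$, exploiting the smoothness of $\ell\mapsto v_\ell$'' must be made quantitative; the cleanest way is exactly the paper's device of controlling $\langle\|\partial_\ell v_{\ell}\|^p\rangle^{1/p}$ or its logarithmic analogue and integrating in $\ell$ (your Fourier-multiplier representation, with $\ell\partial_\ell\hat\phi(\ell k_1,\ell^{3/2}k_2)$ in place of $\hat\phi-1$, reproduces the paper's Step~1 almost verbatim). Second, the residual Fourier integrals $\int p_1^2/(|p_1|^3+p_2^2)^2|e^{ip_j}-1|^2\,dp$ and their $\tilde G$-analogues are indeed finite, but this is a genuinely borderline check (the exponents $-\tfrac12$ near zero and $-\tfrac52$ at infinity are not far from divergent), and the torus sum approximates the integral only at small increments; one should either estimate the lattice sum directly (as the paper does with \eqref{f16old}) or argue that the $d(y,x)\sim 1$ range is trivially bounded. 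The main structural disadvantage of your route is that it does not generalize: the reduction via the linear equation cannot be used for Lemma~\ref{L5}, where $F^\ell=P(v_\ell\partial_2 R v_\ell)$ is precisely the object one cannot control by elementary H\"older bounds on $v$, and the paper's Fourier-coefficient/semigroup machinery is set up so that both Lemma~\ref{L1} and Lemma~\ref{L5} go through the same Steps~1--5.
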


The above estimate holds also true for the white noise $\xi$ (instead of the projected distribution $P\xi$); since we only need $P\xi$ in the sequel, 
we will restrict to the estimate in Lemma \ref{L1}.

Another rule of thumb is that the elliptic operator ${\cal L}=-\partial_1^2-|\partial_1|^{-1}\partial_2^2$ increases
regularity by two increments (in our anisotropic metric where the first component in the ``numeraire'' or unit of reference).
While this does not fall into the realm of standard Schauder theory because of the non-locality of the elliptic
operator, it is indeed true on our H\"older scale:

\begin{lem}\label{L2}
Let $\alpha\in({\frac12},\frac{3}{2}){\setminus\{1\} }$. For any periodic function $f$ with vanishing average in $x_1$ we have

\begin{align*}
{[Rf]_\alpha}, [f]_\alpha\lesssim[(-\partial_1^2-|\partial_1|^{-1}\partial_2^2)f]_{\alpha-2}.
\end{align*}
\end{lem}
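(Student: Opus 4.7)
My plan is to run a semigroup-based Schauder argument tailored to the nonstandard symbol of $\LL$, in the spirit of the tools from \cite[Section 2]{OttoWeber}. Set $F:=\LL f$. Introduce the semigroup $\Psi_T:=e^{-T\LL}$ on the subspace of periodic functions with vanishing $x_1$-average (i.e.~$k_1\ne 0$), where $\LL$ has the Fourier multiplier $k_1^2+k_2^2/|k_1|$ and is invertible via $\LL^{-1}=\int_0^\infty\Psi_T\,dT$. Since the Hilbert transform $R$ acts only in the $k_1$-variable, it commutes with $\LL$ and with $\Psi_T$.

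\textbf{Step 1 (kernel bounds).} I would first establish the key technical input: the rescaling $(k_1,k_2)=(T^{-1/2}\hat k_1,T^{-3/4}\hat k_2)$ turns $e^{-T(k_1^2+k_2^2/|k_1|)}$ into the universal Schwartz-type profile $e^{-(\hat k_1^2+\hat k_2^2/|\hat k_1|)}$, so inverse Fourier transform realizes both $\Psi_T$ and $R\Psi_T$ as convolutions against smooth periodic kernels concentrated on the anisotropic scale $(x_1,x_2)\sim(T^{1/2},T^{3/4})$. This should yield weighted $L^1$-bounds of the form
\[
\int|\partial_1^j\partial_2^k\Psi_T(z)|\,d(0,z)^\gamma\,dz\ \lesssim\ T^{\gamma/2-j/2-3k/4}\qquad(\gamma\ge 0,\ j,k\ge 0),
\]
and likewise for $R\Psi_T$.

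\textbf{Step 2 (Schauder estimate).} With the kernel bounds in hand, I pick a near-optimal decomposition of $F$ from Definition \ref{definitionCbeta}: $F=c+\partial_1g+\partial_2h$ in the range $\alpha\in(1,\frac{3}{2})$, and $F=c+\partial_1^2g+\partial_2h$ in the range $\alpha\in(\frac{1}{2},1)$. Since $\Psi_T$ annihilates $c$, I would represent
\[
f=\int_0^\infty\Psi_T F\,dT=\int_0^\infty (\partial_1\Psi_T)g+(\partial_2\Psi_T)h\,dT,
\]
transferring derivatives onto the kernel (with $\partial_1^2$ in place of $\partial_1$ in the range $\alpha<1$). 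To bound the H\"older increment, with the first-order Taylor subtraction of Definition \ref{definitionCalpha} when $\alpha>1$, I split the $T$-integral at $T_0:=d(x,y)^2$: for $T\le T_0$ use the mean-zero property of derivative kernels together with Step 1 (taking $\gamma=\alpha-1$) to bound the integrand pointwise; for $T\ge T_0$ take one more (respectively two more for $\alpha>1$) spatial derivatives so the increment becomes $|y-x|$-factors times kernel norms. Matching exponents against the weighted bounds of Step 1 should produce exactly $d(x,y)^\alpha$, with the condition $\alpha<\frac{3}{2}$ entering as integrability at $T=\infty$. The estimate for $[Rf]_\alpha$ then follows by running the same computation with $\Psi_T$ replaced by $R\Psi_T$.

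\textbf{Main obstacle.} The technical heart of the argument is the kernel analysis in Step 1. Because $\LL$ has the nonlocal symbol $k_2^2/|k_1|$ rather than a polynomial, its semigroup kernel admits no closed form, and the rapid anisotropic spatial decay must be extracted from the Fourier integral by repeated integration by parts in $(k_1,k_2)$. Particular care is required near $k_1=0$, where $k_2^2/|k_1|$ is singular and the factor $e^{-Tk_1^2}$ alone fails to yield a smooth multiplier at the origin; the saving grace is that the $k_1=0$ frequencies are simply excluded by working on the $P$-projected subspace. Once the kernel bounds are in place, Step 2 is a routine implementation of the anisotropic semigroup template dictated by the Carnot--Carath\'eodory metric $d$.
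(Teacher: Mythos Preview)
Your Step 1 contains a genuine gap. The ``universal profile'' $m(\hat k)=e^{-(\hat k_1^2+\hat k_2^2/|\hat k_1|)}$ is \emph{not} Schwartz-type: along $\hat k_2=0$ it tends to $1$ as $\hat k_1\to 0$, while for any fixed $\hat k_2\ne 0$ it tends to $0$. Hence $m$ is discontinuous at the origin, and by Riemann--Lebesgue its inverse Fourier transform cannot lie in $L^1(\mathbb{R}^2)$. Your weighted $L^1$ bound therefore fails already for $\gamma=j=k=0$, and the whole Schauder machinery of Step 2 (which needs uniform $L^1$ control of the kernel against powers of $d$) breaks down. Your proposed fix---excluding $k_1=0$ via the projection $P$---does not help: on $\mathbb{R}^2$ the set $\{k_1=0\}$ has measure zero, so removing it changes nothing about the kernel; what governs spatial decay is the \emph{smoothness} of the multiplier near $k_1=0$, and that cannot be repaired by a projection. (In the periodic setting one has $|k_1|\ge 2\pi$, but then there is no continuous rescaling and no universal profile to speak of, so the scaling argument collapses.)

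The paper sidesteps this by not using the semigroup of $\LL$ at all. It introduces $\mathcal A:=|\partial_1|\LL=|\partial_1|^3-\partial_2^2$, whose heat kernel $\psi_T(k)=e^{-T(|k_1|^3+k_2^2)}$ \emph{tensorizes} into a Gaussian in $x_2$ and a one-variable kernel with symbol $e^{-|k_1|^3}$. The latter is $C^{2,1}$ at $k_1=0$, which gives just enough polynomial $x_1$-decay for the needed moment bounds. The Schauder estimate then runs as: (i) Definition \ref{definitionCbeta} gives $\sup_T(T^{1/3})^{2-\alpha}\|f_T\|\lesssim[f]_{\alpha-2}$; (ii) semigroup smoothing upgrades this to $\sup_T(T^{1/3})^{3-\alpha}\||\partial_1|f_T\|\lesssim[f]_{\alpha-2}$; (iii) the identity $\LL u=f\iff \mathcal A u=|\partial_1|f$ turns this into $\sup_T(T^{1/3})^{-\alpha}\|T\mathcal A u_T\|\lesssim[f]_{\alpha-2}$; (iv) a positive-exponent characterization $[u]_\alpha\lesssim\sup_T(T^{1/3})^{-\alpha}\|T\mathcal A u_T\|$ closes the loop. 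The bound on $[Ru]_\alpha$ comes for free by replacing $|\partial_1|$ with $\partial_1$ in step (ii). The passage from $\LL$ to $\mathcal A$ is precisely what makes the kernel analysis tractable, and is the idea your proposal is missing.
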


Hence for the solution $v$ of the linearized equation (\ref{w07}), we obtain from Lemmas \ref{L1} and \ref{L2} that
\begin{align}\label{w07bis}
\langle\sup_{0<\ell\le 1}[v_\ell]_{\frac{3}{4}-\eps}^p\rangle<\infty\quad\mbox{for all}\;1\leq p<\infty.
\end{align}
In particular, we have almost surely $[v]_{\frac{3}{4}-\eps}<\infty$, but expect no regularity beyond
$\frac{3}{4}$. This is reminiscent of Brownian motion, which almost surely is H\"older continuous
with exponent $\frac{1}{2}-\eps$, but almost surely not H\"older continuous with exponent $\frac{1}{2}$ 
(even not on small intervals). We are now in the position to explain the difficulty with (\ref{w01}):
One of the three non-linear terms cannot be given a sense when $u$ is substituted by $v$ (and thus
we expect the same problem for $u$ itself). At first sight, both non-divergence form terms
\begin{align}\label{w02}
v\partial_2Rv,\quad \frac{1}{2}v\partial_1Rv^2
\end{align}
look difficult, since neither $\partial_2{R}v$ nor even $\partial_1 {R}v$ exist classically. However,
like is trivially the case for the third term $\frac{1}{2} \partial_2 Ru^2$,
one of the terms in (\ref{w02}) can be given a distributional sense thanks to the following result on the
product of a function $u$ and a distribution $f$. Loosely speaking, $uf$ can be given a canonical sense
as a distribution (not better than $f$) if $u$ is more regular than $f$ is irregular. Results like these are classical and we need
a variant compatible with the anisotropic scaling dictated by the anisotropic metric $d$. 

\smallskip

\begin{lem}\label{L3}
Let $\alpha\in (0,\frac{3}{2})$ and $\beta\in(-\frac{3}{2},0){\setminus \{-1, -\frac12\}}$
with $\alpha+\beta>0$. If $u$ is a periodic function with $[u]_\alpha<\infty$ and
$f$ is a periodic distribution with $[f]_\beta<\infty$, then there exists a distribution denoted by
$u f$ such that for all convolution scales $\ell\le 1$: 
\begin{align}\label{w23}
\ell^{\alpha+\beta}[u]_\alpha[f]_\beta \gtrsim
\begin{cases} \|\lceil u, (\cdot)_\ell\rceil f\| &\quad \textrm{ if } \alpha \in (0,1],\\
\| \big(  \lceil u, (\cdot)_\ell\rceil-\partial_1 u \lceil x_1, (\cdot)_\ell\rceil \big) f\| &\quad \textrm{ if } \alpha \in (1, \frac32),
\end{cases}
\end{align}
where we denoted the commutator-convolution:
$$\lceil g, (\cdot)_\ell\rceil f:=gf_\ell-(gf)_\ell$$ for the function $g\in \{u, x_1\}$. \footnote{If $g(x)=x_1$, then $gf$ has the (standard) meaning of a product between a $C^\infty$ function and a distribution.}
This property characterizes $u f$ uniquely, even independently of the (Schwartz symmetric) convolution kernel $\phi$.
Moreover, provided $u$ has vanishing average in $x_1$,
\begin{align}\label{w03}
[u f]_\beta\lesssim[u]_\alpha[f]_\beta.
\end{align}
\end{lem}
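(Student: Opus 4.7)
I would prove Lemma \ref{L3} in three stages: (a) establish the commutator estimate (\ref{w23}) for smooth $u,f$; (b) establish the H\"older estimate (\ref{w03}) for smooth data by explicit construction of a representation $uf=\tilde c+\partial_1\tilde g+\partial_2\tilde h$ fitting Definition \ref{definitionCbeta}; (c) pass to general $u\in C^\alpha$, $f\in C^\beta$ by mollification, deriving uniqueness and kernel-independence from (\ref{w23}) itself.

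\textbf{Stage (a).} For smooth data I use the pointwise representation
\begin{align*}
\lceil u,(\cdot)_\ell\rceil f(x)=\int(u(x)-u(y))\,f(y)\,\phi_\ell(x-y)\,dy,
\end{align*}
replacing $u(x)-u(y)$ by the first-order Taylor remainder in $x_1$ when $\alpha>1$. Viewing this as the pairing $f(\psi_x)$ with test function $\psi_x(y):=(u(x)-u(y))\phi_\ell(x-y)$, I substitute the decomposition of $f$ from Definition \ref{definitionCbeta} and integrate by parts to shift derivatives onto $\psi_x$. The cancellation $\int\partial_i\psi_x\,dy=0$ on the torus lets me subtract the value of $g$ or $h$ at the center point $x$ and bring in $[g]_{\beta+1}$ or $[h]_{\beta+3/2}$. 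The factor $u(x)-u(y)$ yields the gain $[u]_\alpha\ell^\alpha$ on the effective support of $\phi_\ell$ (where $d(x,y)\lesssim\ell$), each $\partial_1$ hitting $\phi_\ell$ costs $\ell^{-1}$, each $\partial_2$ costs $\ell^{-3/2}$, and the residual factor $g$ or $h$ contributes $\ell^{\beta+1}$ or $\ell^{\beta+3/2}$. The exponents assemble to exactly $\ell^{\alpha+\beta}$. The excluded endpoints $\beta\in\{-\tfrac12,-1\}$ are precisely those where Definition \ref{definitionCbeta} switches form and where this scheme would have to be reworked.

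\textbf{Stage (b) and main obstacle.} The principal difficulty is that the naive Leibniz identity $u\partial_ig=\partial_i(ug)-(\partial_iu)g$ --- which would suggest $\tilde g:=ug$, $\tilde h:=uh$ --- involves the remainder $(\partial_iu)g$ that is not well-defined when $u$ is only $C^\alpha$ with $\alpha\le 1$. I plan to construct $\tilde g,\tilde h$ analytically via a telescoping over dyadic mollifications of $u$: write $u=u_1+\sum_{j\ge 0}(u_{2^{-j-1}}-u_{2^{-j}})$, apply the smooth Leibniz rule to each increment, and control the resulting commutator-type residuals by stage (a). The geometric series converges precisely because $\alpha+\beta>0$, giving $[\tilde g]_{\beta+1}+[\tilde h]_{\beta+3/2}\lesssim[u]_\alpha[f]_\beta$. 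The vanishing-average hypothesis on $u$ is used to absorb the constant $\tilde c$ into the $\partial_2$-term by a suitable choice of antiderivative. The case $\beta\in(-\tfrac32,-1]$, where the decomposition uses $\partial_1^2$ instead of $\partial_1$, is handled by an analogous telescoping with one extra derivative in $x_1$.

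\textbf{Stage (c).} For general $u\in C^\alpha$, $f\in C^\beta$, set $f^{(n)}:=f_{1/n}$ and consider the smooth products $uf^{(n)}$. Stage (b) gives a uniform bound $[uf^{(n)}]_\beta\lesssim[u]_\alpha[f]_\beta$, while stage (a) applied to $(u,f^{(n)}-f^{(m)})$ shows $\{uf^{(n)}\}_n$ is Cauchy in a slightly weaker H\"older norm; the uniform bound upgrades the limit back to $C^\beta$. Both (\ref{w23}) and (\ref{w03}) pass to the limit, defining $uf$ with the stated estimates. Uniqueness follows immediately from (\ref{w23}): if two candidate distributions $F_1,F_2$ both satisfy that estimate then $\|(F_1)_\ell-(F_2)_\ell\|=O(\ell^{\alpha+\beta})\to 0$, so $F_1=F_2$ as distributions. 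Independence from the (symmetric Schwartz) convolution kernel $\phi$ follows by the same comparison applied to two different kernels.
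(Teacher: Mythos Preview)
Your stage (a) contains a genuine gap. When you integrate by parts to move the derivative from $f=c+\partial_1 g+\partial_2 h$ onto $\psi_x(y)=(u(x)-u(y))\phi_\ell(x-y)$, the product rule produces a term $-(\partial_i u)(y)\,\phi_\ell(x-y)$ inside $\partial_{y_i}\psi_x$. After subtracting $g(x)$ (resp.\ $h(x)$) this forces you to control $\|\partial_1 u\|\,\ell^{\beta+1}$ from the $\partial_1 g$ piece and $\|\partial_2 u\|\,\ell^{\beta+3/2}$ from the $\partial_2 h$ piece. The first is not bounded by $[u]_\alpha$ when $\alpha\le 1$, and the second is not bounded by $[u]_\alpha$ for any $\alpha<\tfrac32$; so the estimate you obtain is not uniform in the smoothness of $u$ and cannot pass to the limit in stage (c). The cancellation $\int\partial_i\psi_x\,dy=0$ lets you bring in $[g]_{\beta+1}$, $[h]_{\beta+3/2}$, but it does nothing to tame the $\partial_i u$ factor itself. (For $\alpha>1$ the Taylor correction replaces $\partial_1 u(y)$ by $\partial_1 u(y)-\partial_1 u(x)$, which is fine, but the $\partial_2 u$ term from the $\partial_2 h$ piece is still uncontrolled.)

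The paper avoids this by decoupling the two H\"older gains. It never touches the decomposition of $f$ in the commutator estimate; instead it first proves the elementary bound $\|\lceil u,(\cdot)_T\rceil f\|\lesssim (T^{1/3})^\alpha[u]_\alpha\|f\|$ directly from the kernel representation (no integration by parts, hence no derivative lands on $u$), and then accesses $[f]_\beta$ through the equivalent norm $[f]_\beta\sim\sup_T(T^{1/3})^{-\beta}\|f_T\|$ of Lemma~\ref{Lequiv}. A dyadic telescoping in the \emph{semigroup time} (writing $\lceil u,(\cdot)_{T-t}\rceil f_t$ as a sum of $(\lceil u,(\cdot)_\tau\rceil f_\tau)_{T-2\tau}$ over dyadic $\tau$) upgrades the estimate from $f_\tau$ to $f$ and produces the factor $(T^{1/3})^{\alpha+\beta}$, the geometric series converging precisely because $\alpha+\beta>0$. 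The $C^\beta$ bound on $uf$ is then read off from the same Lemma~\ref{Lequiv} by estimating $\|(uf_t)_T\|$, not by exhibiting an explicit $(\tilde g,\tilde h)$. Finally the passage from the semigroup kernel $\psi_T$ to a general symmetric Schwartz mask $\phi_\ell$ is handled a posteriori via the representation $\phi=\int_0^1\omega^t*\psi_t\,dt$. Your telescoping instinct in stage (b) is in the right spirit, but the paper telescopes in the mollification of $f$ rather than of $u$, and this is exactly what sidesteps the derivative-of-$u$ problem.
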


Finally, while the usual Hilbert transform is bounded on H\"older spaces, our one-dimensional
Hilbert transform $R$ is not; there is a logarithmic loss in the order:

\smallskip

\begin{lem}\label{L4}
Let $\alpha$ and $\eps>0$ with $\alpha,\alpha-\eps\in{(0, \frac{3}{2})}$. 
Then for any function $f$ of vanishing average in $x_1$:
\begin{align}\label{w04}
[Rf]_{\alpha-\eps}\lesssim[f]_\alpha.
\end{align}
\end{lem}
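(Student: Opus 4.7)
The strategy is to exploit the mollification characterization of the anisotropic H\"older norm, together with the fact that $R$ is a Fourier multiplier in $x_1$ and hence commutes with the convolution $\phi_\ell*$ defined via \eqref{f37}. Concretely, for $\alpha'\in(0,\frac{3}{2})$ the semi-norm $[g]_{\alpha'}$ is equivalent (away from the integer value $\alpha'=1$) to a decay bound of the form $\sup_{0<\ell\leq 1}\ell^{-\alpha'}\|g-g_\ell\|$, with an obvious Taylor-subtraction adjustment in the range $(1,\frac{3}{2})$; this is precisely the kind of characterization that the authors import from \cite[Section 2]{OttoWeber}. Since $(Rf)_\ell=R(f_\ell)$, we have
\[
Rf-(Rf)_\ell = R(f-f_\ell),
\]
and the task reduces to bounding $\|R(f-f_\ell)\|\lesssim \ell^{\alpha-\eps}[f]_\alpha$.

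The natural way to proceed is a dyadic telescoping in scale. Setting $\ell_j:=2^{-j}\ell$, write
\[
f-f_\ell = \sum_{j\ge 0}(f_{\ell_{j+1}}-f_{\ell_j}).
\]
Each piece $g_j:=f_{\ell_{j+1}}-f_{\ell_j}$ satisfies $\|g_j\|\lesssim \ell_j^\alpha[f]_\alpha$ by the mollifier characterization of $[f]_\alpha$, and is essentially Fourier-localized at scale $\ell_j^{-1}$ in the $x_1$-direction. The heart of the argument is a per-scale bound of the form
\[
\|Rg_j\|\lesssim \ell_j^{-\eps}\|g_j\|,
\]
which is the mechanism producing the loss advertised in the lemma: $R$ is $L^2$-isometric but fails to be bounded on $L^\infty$, and recovering a supremum bound from an $L^p$ bound by exploiting the approximate scale-$\ell_j^{-1}$ smoothness in $x_1$ of $g_j$ costs an arbitrarily small $\ell_j^{-\eps}$. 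Summing the resulting geometric series (convergent since $\alpha-\eps>0$) yields
\[
\|R(f-f_\ell)\|\lesssim \sum_{j\ge 0}\ell_j^{\alpha-\eps}[f]_\alpha \lesssim \ell^{\alpha-\eps}[f]_\alpha,
\]
as required.

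For the range $\alpha\in(1,\frac{3}{2})$, where the Taylor-subtracted definition of $[\cdot]_\alpha$ is in force, I would bootstrap: since $R$ commutes with $\partial_1$ and $\partial_1 f$ still has vanishing average in $x_1$, applying the previous argument to $\partial_1 f\in C^{\alpha-1}$ yields the needed control on $\partial_1 Rf = R\partial_1 f$, which (together with the $\alpha\in(0,1]$ case applied to $Rf$ itself to handle the $x_2$-differences) recovers the modified semi-norm.

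The main obstacle is the per-scale bound $\|Rg_j\|\lesssim \ell_j^{-\eps}\|g_j\|$. Unlike a genuine Littlewood--Paley projector, $\phi_\ell$ is only \emph{approximately} frequency-localized, so one cannot simply invoke a bounded-$L^1$-kernel argument for $R$ restricted to a dyadic annulus; the decay of $\hat\phi_\ell$ away from scale $\ell_j^{-1}$ must be quantified. I expect the cleanest route is a Bernstein-type inequality paired with the $L^p$-boundedness of $R$ for $p<\infty$: embed $L^p\hookrightarrow L^\infty$ using that $g_j$ has controlled derivatives in $x_1$ at scale $\ell_j^{-1}$ (coming from $\phi_\ell$), picking $p$ just large enough to pay a factor $\ell_j^{-\eps}$ with $\eps$ proportional to $1/p$.
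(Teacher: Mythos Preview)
Your approach is workable but genuinely different from the paper's. The paper does not touch the 2D mollifier characterization here. Instead it first proves a \emph{sharp} one-dimensional bound $[Ru]_\alpha \lesssim [u]_\alpha$ (no $\eps$-loss) for periodic $u=u(x_1)$ of vanishing average, using the one-dimensional Poisson-type semigroup $e^{-T|\partial_1|}$ in place of $\psi_T$. It then applies this fiber-wise in $x_2$; the $\eps$-loss enters only through an elementary interpolation $\min\{|x_1-y_1|^\alpha,\,|x_2-y_2|^{2\alpha/3}\}\le |x_1-y_1|^{\alpha-\beta}|x_2-y_2|^{2\beta/3}$, which trades $x_1$-regularity for $x_2$-regularity when passing from the 1D fibers back to the coupled anisotropic norm. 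So in the paper's bookkeeping, the loss reflects the mismatch between the 1D operator $R$ and the 2D norm, not any defect of $R$ on H\"older spaces.

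Your Bernstein-plus-$L^p$ route attributes the loss instead to the failure of $L^\infty$-boundedness of $R$, which is an equally valid accounting. A cleaner execution of your idea avoids both the dyadic telescope and your approximate-localization worry: use the characterization $[g]_{\alpha'}\sim\sup_{T>0}(T^{1/3})^{-\alpha'}\|T\mathcal{A}g_T\|$ (valid for all $\alpha'\in(0,\tfrac32)\setminus\{1\}$), write $T\mathcal{A}(Rf)_T=\psi_{T/2}*R(T\mathcal{A}f_{T/2})$ via the semigroup, and combine the paper's inverse estimate $\|\psi_{T/2}*h\|_\infty\lesssim(T^{1/3})^{-5/(2p)}\|h\|_{L^p}$ with $\|Rg\|_{L^p}\lesssim_p\|g\|_{L^p}$. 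This also handles $\alpha>1$ directly and sidesteps your bootstrap, which as written has a gap: applying the $\alpha\le 1$ case to $f$ only yields $x_2$-H\"older exponent at most $\tfrac{2}{3}(1-\eps')$, which does not reach the required $\tfrac{2}{3}(\alpha-\eps)$ once $\alpha-\eps>1$.
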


From Lemma \ref{L3} and Lemma \ref{L4} we learn that the cubic term in (\ref{w02}) poses no
fundamental problem  (because of $(\frac{3}{4}-\eps)-(\frac{1}{4}-2\eps)>0$
for $\eps\ll 1$). It can be given a sense and estimated as a distribution of order 
slightly below $-\frac{1}{4}$:
\begin{align*}
[\frac{1}{2}v \partial_1 R v^2]_{-\frac{1}{4}-2\eps}
&\stackrel{(\ref{w03})}{\lesssim} [v]_{\frac{3}{4}-\eps}[\partial_1Rv^2]_{-\frac{1}{4}-2\eps}
\stackrel{Def\, \ref{definitionCbeta}}{\le} [v]_{\frac{3}{4}-\eps}[Rv^2]_{\frac{3}{4}-2\eps}
\stackrel{(\ref{w04})}{\lesssim} [v]_{\frac{3}{4}-\eps}[v^2]_{\frac{3}{4}-\eps}
{\lesssim} [v]_{\frac{3}{4}-\eps}^3
\end{align*}
and thus by (\ref{w07bis})
\begin{align*}
\langle[\frac{1}{2} v \partial_1 Rv^2]_{-\frac{1}{4}-2\eps}^p\rangle<\infty\quad\mbox{for all}\;p<\infty.
\end{align*}
The real issue comes from the quadratic term in (\ref{w02}): Since 
we expect $v$ to have regularity $\alpha$ slightly below $\frac{3}{4}$ and thus,
$\partial_2 v$ (also, $\partial_2 Rv$) to have regularity $\beta$ slightly below $\frac{3}{4}-\frac{3}{2}=-\frac{3}{4}$,
we just miss the condition $\alpha+\beta>0$ required by Lemma \ref{L3}. Hence
we need an ``off-line'' stochastic treatment of the term $v_\ell\partial_2Rv_\ell$. 

\begin{lem}\label{L5}
Consider $F^\ell:={P}(v_\ell\partial_2Rv_\ell)$. 
We have for all $1\leq p<\infty$ and $0<\eps<\frac14$:
\begin{align*}
 \langle\sup_{\ell\le 1}[F^\ell]_{-\frac{3}{4}-\eps}^p\rangle&\lesssim 1,\\
\langle\sup_{\ell,\ell'\le \ell_0}[F^\ell-F^{\ell'}]_{-\frac{3}{4}-\eps}^p\rangle^{1/p}&\lesssim { \ell_0^{\eps/2}} \quad \textrm{ for } \ell_0\leq  1.
\end{align*}
In particular, almost surely, $\{F^\ell\}_{\ell\downarrow0}$ is a Cauchy ``sequence'' in the Banach space defined through $[\cdot]_{-\frac{3}{4}-\eps}$ and thus has a limit $F$ such that $F=PF$. Moreover, almost surely,
$F$ does not depend on the (Schwartz symmetric) convolution kernel $\phi$.
\end{lem}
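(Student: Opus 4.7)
The strategy is the standard ``off-line'' Gaussian renormalization of a border-line singular product, in the spirit of the rough-paths / regularity-structures program and of the toolbox developed in \cite[Section 2]{OttoWeber}. Since $v$ depends linearly on $\xi$, both $v_\ell$ and $\partial_2 R v_\ell$ are Gaussian fields, and the product $v_\ell \partial_2 R v_\ell$ sits in the sum of the zeroth and second homogeneous Wiener chaos. By stationarity its mean is a constant; a direct Fourier check using
\[
v_\ell(k)=\phi_\ell(k)\tfrac{|k_1|}{|k_1|^3+k_2^2}\xi_k\,\mathbf{1}_{k_1\neq0},\qquad \partial_2 Rv_\ell(k)=-k_2\sgn(k_1)\,v_\ell(k)/\xi_k\cdot \xi_k,
\]
shows this constant vanishes, the summand being odd in $k_1$ through the factor $\sgn(k_1)$ coming from $R$. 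In any case $P$ annihilates constants. Hence $F^\ell$ lies in the second Wiener chaos, and Nelson's hypercontractivity reduces the proof of all $L^p(\langle\cdot\rangle)$ estimates to their $L^2$ analogues at the price of a $p$-dependent prefactor.

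For the $L^2$ bound I would compute, via Wick's formula, the covariance of $(F^\ell)_L(x)$ and $(F^{\ell'})_L(y)$, where $(\cdot)_L$ is a further mollification at scale $L$. This reduces to an explicit double sum over $k+k'\neq 0$ weighted by $|\phi_L(k+k')|^2$ and by the symbols above. The anisotropic concentration of $\phi_\ell$ on $|k_1|\lesssim\ell^{-1}$ and $|k_2|\lesssim\ell^{-3/2}$ then yields, after book-keeping,
\[
\langle |(F^\ell)_L(x)|^2\rangle\lesssim L^{-3/2-2\eps}\quad\text{uniformly in }\ell\le 1,
\]
while for the difference the additional vanishing of $\phi_\ell-\phi_{\ell'}$ at $k=0$, interpolated against the uniform bound, produces a gain of $\ell_0^{\eps}$, i.e.\ $\langle |(F^\ell-F^{\ell'})_L(x)|^2\rangle\lesssim \ell_0^{\eps} L^{-3/2-2\eps}$ for $\ell,\ell'\le\ell_0$.

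Next I would upgrade these pointwise moment bounds to bounds on $[\cdot]_{-\frac34-\eps}$ via the semigroup/mollification characterization of this negative H\"older norm developed in \cite[Section 2]{OttoWeber} (essentially $[f]_{-\frac34-\eps}\sim\sup_{L\le 1}L^{\frac34+\eps}\|f_L\|$), combined with a Kolmogorov-type continuity argument and a union bound over dyadic scales $L=2^{-j}$ and a dyadic grid of base points in the torus. Hypercontractivity makes $p$ as large as needed for Kolmogorov to apply. This yields both displayed inequalities. The second estimate then gives, along dyadic $\ell=2^{-j}$, a geometric Cauchy property via Borel--Cantelli, which extends to the continuous family by telescoping; the limit $F$ inherits $F=PF$ by passage to the limit. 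Running the same scheme on $F^{\ell,\phi}-F^{\ell,\tilde\phi}$ for two symmetric Schwartz regularizations, using that both $\phi_\ell(k)$ and $\tilde\phi_\ell(k)$ agree with $1$ at $k=0$, yields kernel-independence of the limit.

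\textbf{Main obstacle.} The genuine difficulty is the border-line singularity of the product: $v_\ell$ has regularity just below $\tfrac34$ while $\partial_2 Rv_\ell$ has regularity just below $-\tfrac34$, so $\alpha+\beta=0$ and the deterministic Lemma \ref{L3} just fails. Only the Wick cancellations arising from the second-chaos structure (together with the removal by $P$) make the otherwise logarithmically divergent Fourier sums convergent, and extracting the precise $\ell_0^{\eps/2}$ rate requires a careful low-frequency interpolation between the uniform moment bound and the vanishing of $\phi_\ell-\phi_{\ell'}$ at $k=0$.
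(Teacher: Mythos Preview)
Your plan is correct and follows essentially the same route as the paper: second Wiener chaos plus Nelson's hypercontractivity, a Fourier-side second-moment computation via the Wick formula, and the semigroup characterization of $[\cdot]_{-\frac34-\eps}$ from Lemma~\ref{Lequiv}. Two technical choices differ. First, to obtain the supremum over $\ell$ and the Cauchy rate, the paper does not estimate differences $F^\ell-F^{\ell'}$ directly but rather the logarithmic derivative $\ell\partial_\ell F^\ell$; since $\ell\partial_\ell\phi_\ell(k)=\delta\phi(\ell k_1,\ell^{3/2}k_2)$ with $\delta\phi(0)=0$, this immediately gives the extra factor $\min\{1,\ell\,d(k,0)\}$ on the Fourier side, and the sup over $\ell,\ell'\le\ell_0$ is then recovered by $\int_0^{\ell_0}\|\partial_\ell F^\ell_T\|\,d\ell$. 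This is a bit cleaner than your interpolation on $\phi_\ell-\phi_{\ell'}$, but both work. Second, to pass from pointwise moments to the sup norm the paper uses stationarity (so $\langle|F^\ell_T(x)|^2\rangle$ is independent of $x$, hence equals its spatial average, to which Plancherel applies) and then the inverse estimate $\|f_T\|\lesssim (T^{1/3})^{-\frac{5}{2p}}(\int_{[0,1)^2}|f_{T/2}|^p)^{1/p}$; only a dyadic argument in $T$ is needed, not a spatial Kolmogorov grid. Finally, for kernel-independence the paper argues by double mollification $F^{\ell\tilde\ell}:=P(v_{\ell\tilde\ell}\partial_2 R v_{\ell\tilde\ell})$ and interchange of limits, rather than by estimating $F^{\ell,\phi}-F^{\ell,\tilde\phi}$ directly.

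One small correction to your ``main obstacle'' paragraph: the relevant Fourier sum $\sum_{k'+k''=k}d^{-4}(k',0)d^{-1}(k'',0)$ is not logarithmically divergent but absolutely convergent (the effective dimension is $\tfrac52<4$); the role of the stochastic structure is not to cure a divergence in this sum but to give access to second moments at all, and the role of $P$ is to kill the $k=0$ mode so that the ``diagonal'' contraction in Wick's formula (the case $k'+k''=0$) never contributes to $\langle|F^\ell(k)|^2\rangle$.
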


Equipped with $v$ and $F$, we now may characterize the limit $\lim_{\ell\downarrow 0}u^\ell$ in Theorem \ref{T1}.
To this purpose, we (formally) rewrite (\ref{w01}) in terms of $w=u-\sigma v$ and substitute
${P}(v\partial_2Rv)$ by $F$.

\begin{thm}\label{T2}
This is a continuation of Theorem \ref{T1}. We have 
\begin{align}
\label{fi18}
\lim_{\ell\downarrow 0}[u^\ell-\sigma v_\ell-w]_{\frac{5}{4}-\eps}=0,\quad
\lim_{\ell\downarrow 0}[v_\ell-v              ]_{\frac{3}{4}-\eps}=0,
\end{align}
where $w$ is the unique periodic function with vanishing average in $x_1$ that satisfies (in a distributional sense)
\begin{align}
\lefteqn{(-\partial_1^2-|\partial_1|^{-1}\partial_2^2)w}\nonumber\\
&+P\big(\sigma^2F+\sigma v\partial_2 Rw+\sigma w\partial_2 Rv+w\partial_2Rw\big)\nonumber\\
&+\frac{1}{2} \partial_2 R(w+\sigma v)^2-
\frac{1}{2} P((w+\sigma v) \partial_1R(w+\sigma v)^2)=0\label{w08}
\end{align}
and that is small in the sense of
\begin{align}\label{w09}
[w]_{\frac{5}{4}-\eps}\le C\left(\frac\sigma{\sigma_0}\right)^2.
\end{align}
\end{thm}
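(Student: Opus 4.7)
I would set up a Banach fixed-point argument for \eqref{w08} and then deduce the convergence \eqref{fi18} by a stability-in-the-data argument. Rewrite \eqref{w08} as $w = \mathcal{T}(w)$ with
\[
\mathcal{T}(w) := -\mathcal{L}^{-1}\Big\{\sigma^2 F + \sigma P(v\partial_2 Rw) + \sigma P(w\partial_2 Rv) + P(w\partial_2 Rw) + \tfrac{1}{2}\partial_2 R(w+\sigma v)^2 - \tfrac{1}{2}P((w+\sigma v)\partial_1 R(w+\sigma v)^2)\Big\},
\]
every function-times-distribution product being interpreted via Lemma \ref{L3}. Observe that the single product $v\partial_2 Rv$, which falls outside the reach of Lemma \ref{L3}, has been absorbed into the off-line object $F$ in the very definition of \eqref{w08}; all remaining products lie in the admissible range of Lemma \ref{L3} once one tracks exponents under $\partial_1, \partial_2, R$.

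\textbf{Estimates and fixed point.} Applying Lemma \ref{L4}, Definition \ref{definitionCbeta}, and Lemma \ref{L3} in succession, one finds e.g. that $\partial_2 Rv \in C^{-3/4-\eps}$, so that with $w \in C^{5/4-\eps}$ (giving $\alpha+\beta = 1/2 - 2\eps > 0$) one has $[w\partial_2 Rv]_{-3/4-\eps} \lesssim [w]_{5/4-\eps}[v]_{3/4-\eps}$; the remaining four nonlinear terms are treated identically or with strictly more slack. Setting $M := [v]_{3/4-\eps}$ and $N := [F]_{-3/4-\eps}$, Lemma \ref{L2} applied to $\mathcal{L}^{-1}$ then yields
\[
[\mathcal{T}(w)]_{5/4-\eps} \leq C_0\Big(\sigma^2 N + \sigma M [w]_{5/4-\eps} + [w]_{5/4-\eps}^2 + \big([w]_{5/4-\eps} + \sigma M\big)^3\Big),
\]
for a universal constant $C_0$, with an analogous Lipschitz bound for $\mathcal{T}(w_1)-\mathcal{T}(w_2)$. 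Choosing $\sigma_0 := c/(1 + M + N^{1/2})$ (whose negative moments are finite by Lemmas \ref{L1}, \ref{L2}, \ref{L5}) and $K := C(\sigma/\sigma_0)^2$ for a small universal $c$ and a large universal $C$, elementary algebra shows that $\mathcal{T}$ sends the closed ball $B_K := \{w\in C^{5/4-\eps} : \int_0^1 w(\cdot,x_2)\,dx_1 = 0,\, [w]_{5/4-\eps}\leq K\}$ into itself and is a contraction there; Banach's theorem supplies the unique $w \in B_K$ solving \eqref{w08} with bound \eqref{w09}.

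\textbf{Convergence.} Setting $w^\ell := u^\ell - \sigma v_\ell$, plugging $u^\ell = w^\ell + \sigma v_\ell$ into the equation of Theorem \ref{T1}, and using $\mathcal{L} v_\ell = P\xi_\ell$ together with the identity $F^\ell = P(v_\ell\partial_2 R v_\ell)$, one checks by direct expansion that $w^\ell$ solves \eqref{w08} with the data $(v,F)$ replaced by the mollified data $(v_\ell, F^\ell)$; moreover $[w^\ell]_{5/4-\eps} \leq C(\sigma/\sigma_0)^2$ by Theorem \ref{T1}. Since $[v_\ell - v]_{3/4-\eps} \to 0$ (from Lemmas \ref{L1}, \ref{L2}) and $[F^\ell - F]_{-3/4-\eps} \to 0$ (Lemma \ref{L5}), applying the same chain of estimates to the difference of the two fixed-point identities for $w^\ell$ and $w$ gives $[w^\ell - w]_{5/4-\eps} \to 0$, which is the first limit in \eqref{fi18}; the second is the convergence of $v_\ell$ itself. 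The main obstacle, though routine, is the exponent bookkeeping in the estimate step: one must verify in each of the five nonlinear terms that the hypothesis $\alpha+\beta>0$ of Lemma \ref{L3} and the admissibility ranges of Lemmas \ref{L2} and \ref{L4} are met with strictly positive $\eps$-slack after each application of $\partial_1, \partial_2, R$.
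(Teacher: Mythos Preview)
Your argument is essentially the paper's: the paper isolates the deterministic contraction argument as a separate proposition (existence, a priori estimate, and Lipschitz dependence on the data pair $(F,v)$), and then derives both Theorem~\ref{T1} and Theorem~\ref{T2} by applying it to $(\sigma^2 F^\ell,\sigma v_\ell)$ and $(\sigma^2 F,\sigma v)$ respectively and invoking Lemmas~\ref{L1} and~\ref{L5} for the convergence of the data. One correction to your bookkeeping: Lemma~\ref{L4} costs an $\eps$, so from $v\in C^{3/4-\eps}$ you only get $Rv\in C^{3/4-2\eps}$ and hence $\partial_2Rv\in C^{-3/4-2\eps}$, not $C^{-3/4-\eps}$; consequently the fixed point naturally lives in $C^{5/4-2\eps}$, and the paper reconciles this with the statement by running the contraction with a parameter $\epsilon\in(0,\tfrac18)$ equal to half the theorem's $\eps\in(0,\tfrac14)$.
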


Let us comment on the distributional interpretation of the non-linear terms in the equation (\ref{w08}):
There is no issue with $\frac{1}{2} \partial_2R(w+\sigma v)^2$ and of course none with $\sigma^2 PF$.
Based on Lemmas~\ref{L3} and \ref{L4}, we have given the argument for 
the cubic term $\frac{1}{2} P((w+\sigma v) \partial_1R(w+\sigma v)^2)$, which just relied
on $[w+\sigma v]_{\frac{3}{4}-\eps}<\infty$. The three terms
$v\partial_2 Rw$, $w\partial_2 Rv$, and (in particular) $w\partial_2 Rw$ are regular in the sense of Lemma \ref{L3}:
$v$ has regularity $\frac{3}{4}-\eps$, $w$ has regularity $\frac{5}{4}-\eps$ and $\partial_2$
reduces the regularity by $\frac{3}{2}$, which still gives a positive sum $\frac{3}{4}-\eps+\frac{5}{4}-\eps-\frac{3}{2}>0$
for $\eps\ll 1$. In fact, we can motivate the exponent $\frac{5}{4}-\eps$ in (\ref{w09}) as follows:
The worst distributions in the two last lines of (\ref{w08}) are of the order
$-\frac{3}{4}-\eps$; hence by Lemma \ref{L2}, $w$ is expected to be of order $-\frac{3}{4}-\eps+2$
$=\frac{5}{4}-\eps$.

\medskip

We will establish Theorems \ref{T1} and \ref{T2} by formulating (\ref{w08}) as a
fixed point problem in the ball described by (\ref{w09}). Alongside the limiting
fixed point problem, we will also consider the one where $F$ and $v$ in (\ref{w08}) are replaced
by $F^\ell$ and $v_\ell$, respectively. The convergence of the fixed points will then
follow from the convergences in Lemmas \ref{L1} and \ref{L5}.

{\it Outline of the paper}. In Section \ref{sec:holder_space}, we characterize 
the H\"older spaces $C^\beta$, $\beta<0$
introduced in Definition \ref{definitionCbeta}; more precisely, we give an equivalent norm for distributions in $C^\beta$ with $\beta\in (-\frac32, 0)\setminus\{-1, -\frac12\}$ in Lemma \ref{Lequiv} and we prove Lemmas \ref{L2}, \ref{L3} and \ref{L4}. In Section \ref{sec:main}, we prove our main results in Theorems \ref{T1} and \ref{T2}. In Section \ref{sec:sto}, we prove the estimates of the stochastic terms in Lemmas \ref{L1} and \ref{L5}. Finally, in the appendix, we prove in Section \ref{sec:lin_en} that the linearized energy functional does not admit (with a positive probability) critical points of finite energy, while in Section \ref{sec:stand} we recall some standard results for the anisotropic H\"older spaces.

%%%%%%%%%%%%%%%%%%%%%%%%%%%%%%%%%%%%%
%%%%%%%%%%%%%%%%%%%%%%%%%%%%%%%%%%%%%%
%%%%%%%%%%%%%%%%%%%%%%%%%%%%%%%%%%%%

\section{Anisotropic H\"older spaces. Proof of Lemmas \ref{L2}, \ref{L3}, and \ref{L4}}
\label{sec:holder_space}

The proof of Lemma \ref{L3} on products of functions and distributions relies
on an equivalent characterization of the {H\"older norms for a negative exponent} defined in Definition
\ref{definitionCbeta}, which is stated in Lemma \ref{Lequiv} below and the proof of which
relies on Schauder theory for the nonlocal elliptic operator
$${\mathcal A}:=|\partial_1|^3-\partial_2^2,$$ an operator
which clearly is in line with the scaling properties of the distance $d$.
Hence we prove Lemma \ref{L2} on Schauder theory for 
$${\cal L}=-\partial_1^2-|\partial_1|^{-1}\partial_2=|\partial_1|^{-1}{\mathcal A}$$ alongside with 
Lemma \ref{Lequiv}. The equivalent characterization of the negative {exponent} H\"older
norms relies on the ``heat kernel'' of ${\mathcal A}$ used as a convolution family; 
the Fourier transform of $\{\psi_T\}_{T>0}$ is evidently given by
\begin{align}\label{eq14bis}
\psi_T(k)=\exp(-T(|k_1|^3+k_2^2)), \quad \forall k\in \R^2,
\end{align}
and has scaling properties in line with $d$, namely
\begin{align}
\label{f25}
\psi_T(x_1,x_2)=\frac{1}{(T^{1/3})^{1+\frac{3}{2}}}
\psi(\frac{x_1}{T^{1/3}},\frac{x_2}{(T^{1/3})^\frac{3}{2}}), \quad \forall x\in \R^2,
\end{align}
where for simplicity we write $\psi:=\psi_1$. For a periodic distribution $f$, we denote by
$f_T$ its convolution with $\psi_T$, i.e., $f_T=\psi_T*f$, which yields a smooth periodic function; the semi-group property
\begin{align}\label{f24}
(f_t)_T=f_{t+T}\quad\mbox{for all}\;t,T>0
\end{align}
will be very convenient. 

Before embarking on the proofs, a remark on periodic distributions is in place (``periodic'' always
means periodic of period 1 in the two variables $x_1$ and $x_2$). By the space of periodic distributions $f$
we understand the (topological) dual of the space of $C^\infty$ functions $u$ on the torus (endowed with the
family of semi norms $\{\|\partial_1^j\partial_2^\ell u\|\}_{j,\ell\ge 0}$). As such, the spatial average
$\int_{[0,1)^2}f$ and, more generally, the Fourier coefficients $f(k)$ $=\int_{[0,1)^2}\exp({ -} ik\cdot x)f(x)dx$ for $k\in(2\pi\mathbb{Z})^2$ are well-defined. For a $C^\infty$-function $\psi$ with integrable derivatives,
i.\ e.\ $\int_{\mathbb{R}^2}|\partial_1^j\partial_2^\ell\psi|$ $<\infty$ for all $j,k\ge 0$, we can also
give a sense to $\int_{\mathbb{R}^2} f \psi$ that is consistent with the classical case, 
and which is needed to give a sense to the convolution $\psi*f$ as a periodic $C^\infty$ function
(``convolution'' always means convolution on $\mathbb{R}^2$): 
Indeed, for such functions $\psi$, the periodization $u:=\sum_{z\in\mathbb{Z}^2}\psi(\cdot-z)$ is
well-defined and in $C^\infty$, so that we may set $\int_{\mathbb{R}^2} f \psi$ 
$:=\int_{[0,1)^2}f u$. We note that this definition implies on the level of Fourier coefficients
$(\psi*f)(k)=\psi(k) f(k)$ for all $k\in(2\pi\mathbb{Z})^2$, 
where $\psi(k):=\int_{\mathbb{R}^2}\exp({-} ik\cdot x)\psi(x)dx$ is the Fourier transform of $\psi$.
Indeed, with the above periodization $u$ of $\psi$ we have $(\psi*f)(k)$ $=u(k)f(k)$, so that
the statement reduces to the elementary relation $u(k)=\psi(k)$ between the Fourier series of the periodization $u$
and the Fourier transform of the original function $\psi$.

Note that if $f$ is a periodic distribution with $[f]_\beta<\infty$ for some $\beta\in (-\frac32, 0)$, then the constant $c$ in the decomposition of $f$ in Definition \ref{definitionCbeta} is unique and represents the average $\int_{[0,1)^2}f\, dx$ of the periodic distribution $f$.
Therefore, $[f]_\beta=\left|\int_{[0,1)^2}f\, dx \right|+\left[f-\int_{[0,1)^2}f\, dx\right]_\beta$ if $\beta\in (-3/2, 0)$.

\medskip

\subsection{An equivalent $C^\beta$-norm, $\beta<0$. Proof of Lemma \ref{L2}}

We are now in the position to state Lemma \ref{Lequiv}:
\medskip

\begin{lem}\label{Lequiv}
Let $f$ be a periodic distribution in ${[0,1)^2}$.
\footnote{In the case of periodic distributions $f$ {\it of vanishing average on $[0,1)^2$}, one can consider the $\sup$ over all $T>0$ in \eqref{eq25} (respectively, over all $\ell>0$ in \eqref{supl_numa}).}

i) For $\beta\in(-\frac{3}{2},-1)\cup(-1,-\frac{1}{2})\cup(-\frac{1}{2},0)$, we have
\begin{align}\label{eq25}
[f]_\beta\sim\sup_{T\in(0,1]} (T^{1/3})^{-\beta}\|f_T\|,
\end{align}
where we recall that $\|\cdot\|$ denotes the supremum norm, while $a\sim b$ means $a\lesssim b$ and $b\lesssim a$.

ii) For $\beta\in (-\frac 32, 0)$, 
then 
\be
\label{supl_numa}
\sup_{\ell\in (0,1]} \ell^{-\beta} \|f_\ell\| \lesssim [f]_\beta.
\ee
\end{lem}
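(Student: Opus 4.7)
The overall strategy is to prove part (ii) first by a direct estimate on an almost-optimal decomposition, then to deduce the easy direction of (i) by observing that $\psi_T$ is a rescaling of a fixed symmetric Schwartz kernel, and finally to attack the hard direction of (i) by constructing an explicit decomposition of $f - c$ from the heat semigroup of $\mathcal{A}$. For part (ii), pick a near-optimal representation $f = c + \partial_1 g + \partial_2 h$ from Definition \ref{definitionCbeta} (and analogously with $\partial_1^2 g$ when $\beta \in (-\frac{3}{2}, -1]$). Since $\int \phi_\ell = 1$, convolution isolates the constant; since $\int \partial_i \phi_\ell\, dy = 0$ and (when $\beta + \frac{3}{2} > 1$) $\int y_1\, \partial_2 \phi_\ell \,dy = 0$ by the symmetry of $\phi$, one may write
\[
((\partial_1 \phi_\ell) * g)(x) = \int \partial_1 \phi_\ell(y)\,(g(x-y) - g(x))\, dy
\]
and analogously for the $h$-term with a first-order Taylor subtraction when needed. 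Bounding the integrands by the Hölder seminorm times $d(y,0)^{\beta+1}$ (resp.\ $d(y,0)^{\beta+\frac{3}{2}}$) and computing the moment integral via the anisotropic substitution $y_1 = \ell u_1$, $y_2 = \ell^{3/2} u_2$ shows $\int |\partial_1 \phi_\ell(y)|\,d(y,0)^{\beta+1}\,dy \sim \ell^\beta$ and similarly for the other terms. Summing and using $\ell^\beta \geq 1$ (for $\ell \leq 1$) to absorb $|c|$, then taking the infimum over representations, yields \eqref{supl_numa}. The easy direction of (i) follows from (ii) applied with $\phi := \psi$ and $\ell := T^{1/3}$: indeed $\psi$ is a symmetric Schwartz function by \eqref{eq14bis}, and \eqref{f25} shows $\psi_T$ is precisely the rescaling \eqref{f37} of $\psi$.

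For the hard direction of (i), set $M := \sup_{T \in (0, 1]} (T^{1/3})^{-\beta}\|f_T\|$ and $c := \int_{[0, 1)^2} f\, dx$. Starting from
\[
f - c = \int_0^\infty \mathcal{A} f_T\, dT = \int_0^\infty \bigl(|\partial_1|^3 - \partial_2^2\bigr) f_T\, dT
\]
(which makes sense because $f_T \to c$ as $T \to \infty$ by the spectral gap of $\mathcal{A}$ on the torus), I split the $|\partial_1|^3$-term using the operator identity best adapted to $\beta$: for $\beta \in (-1, 0) \setminus \{-\tfrac{1}{2}\}$, using $|\partial_1|^3 = \partial_1(R\partial_1^2)$, I set $g := \int_0^\infty R\partial_1^2 f_T\, dT$ and $h := -\int_0^\infty \partial_2 f_T\, dT$, yielding $f - c = \partial_1 g + \partial_2 h$; for $\beta \in (-\tfrac{3}{2}, -1)$, using $|\partial_1|^3 = -\partial_1^2|\partial_1|$, I set $g := -\int_0^\infty |\partial_1| f_T\, dT$, yielding $f - c = \partial_1^2 g + \partial_2 h$. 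The estimates rest on the scaling-invariant $L^1$-bounds $\|R\partial_1^2 \psi_T\|_{L^1} \lesssim T^{-2/3}$, $\||\partial_1|\psi_T\|_{L^1} \lesssim T^{-1/3}$, $\|\partial_2\psi_T\|_{L^1} \lesssim T^{-1/2}$, which, combined with Young's inequality, the semigroup identity $f_{s+T} = \psi_T * f_s$, and $\|f_{s/2}\| \lesssim M s^{\beta/3}$, give $\|R\partial_1^2 f_s\| \lesssim M s^{\beta/3 - 2/3}$ (integrable near $s=0$ precisely when $\beta > -1$, whence the case split) and $\|\partial_2 f_s\| \lesssim M s^{\beta/3 - 1/2}$. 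The telescoping identity $g - g_T = \int_0^T R\partial_1^2 f_s\, ds$ then yields $\|g - g_T\| \lesssim M T^{(\beta+1)/3}$, which via the standard characterization of positive-exponent anisotropic Hölder spaces recalled in Section \ref{sec:stand} gives $[g]_{\beta+1} \lesssim M$; the estimate $[h]_{\beta+\frac{3}{2}} \lesssim M$ is analogous (in the regime $\beta + \frac{3}{2} > 1$, one first differentiates once in $x_1$ to reduce to an exponent below $1$ before applying the kernel characterization), and the $\beta \in (-\frac{3}{2}, -1)$ case is handled in the same way.

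The main obstacle is the $L^1$-bound $\|R\partial_1^2 \psi_T\|_{L^1} \lesssim T^{-2/3}$ (and its $|\partial_1|$-analogue), since the one-dimensional Hilbert transform $R$ is \emph{not} bounded on $L^1$. I would handle this directly in Fourier space: the multiplier $k_1|k_1|\,e^{-T(|k_1|^3+k_2^2)}$ is $C^1$ in $k$ at $k_1 = 0$ (the singularity of $|k_1|$ being tamed after one $\partial_1$), with only a bounded jump in its second $k_1$-derivative, and is Schwartz away from the origin; integration by parts in the inverse Fourier transform then yields sufficient spatial decay ($\sim |x_1|^{-3}$ at infinity) to be integrable, with the scaling $T^{-2/3}$ dictated by the anisotropic homogeneity \eqref{f25}. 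A second delicate point is the exclusion of the borderline exponents $\beta \in \{-1, -\tfrac{1}{2}\}$, forced by the corresponding integer values of the positive Hölder indices $\beta+1$ or $\beta+\tfrac{3}{2}$, at which the heat-kernel characterization of $C^\alpha$ in Section \ref{sec:stand} degenerates.
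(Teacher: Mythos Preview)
Your approach is essentially the paper's: the time-integral formulas $g = \int_0^\infty R\partial_1^2 f_T\,dT$ and $h = -\int_0^\infty \partial_2 f_T\,dT$ are exactly $\partial_1|\partial_1|u$ and $-\partial_2 u$ for the potential $u := \mathcal{A}^{-1}(f-c)$, which is precisely the paper's decomposition $f-c = \partial_1(\partial_1|\partial_1|u)+\partial_2(-\partial_2 u)$, and the kernel moment bounds you identify (for $|\partial_1|\psi_T$, $R\partial_1^2\psi_T = \partial_1|\partial_1|\psi_T$, and $\partial_2\psi_T$) coincide with what the paper establishes in its Step~\ref{moments}.

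One caveat: Section~\ref{sec:stand} contains no heat-kernel characterization of $C^\alpha$, and the implication $\|g-g_T\|\lesssim M\,T^{(\beta+1)/3}\Rightarrow[g]_{\beta+1}\lesssim M$ does not follow from that bound alone---you also need the gradient decay $\|\partial_1 g_T\|\lesssim M\,T^{\beta/3}$ and $\|\partial_2 g_T\|\lesssim M\,T^{(\beta-1/2)/3}$ to close via an inequality of the type~\eqref{eq16}. These bounds do follow from your integral representation $g_T=\int_T^\infty R\partial_1^2 f_s\,ds$ (differentiate under the integral and use the same kernel estimates), so the argument is salvageable, but the step as written is incomplete. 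The paper sidesteps this by proving and using instead the characterization $[g]_\alpha\lesssim\sup_T(T^{1/3})^{-\alpha}\|T\mathcal{A}g_T\|$, which feeds directly from $\mathcal{A}g_T=\partial_1|\partial_1|f_T$ without the detour.
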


Compared to \cite{OttoWeber}, where similar tools are used, the main difference
is that the mask $\psi=\psi(x)$ defining the semi-group convolution family is not a Schwartz function:
While being smooth, it only has mild decay due to the limited smoothness of $\psi(k)$ in $k_1=0$, cf. \eqref{eq14bis}.

\begin{proof}[Proofs of Lemmas \ref{L2} and \ref{Lequiv}]

\newcounter{L2} % proofstep = 0
\refstepcounter{L2} % increases value by 1

{\it Step} \arabic{L2}.\label{moments}\refstepcounter{L2} 
{\it Moment bounds on the kernel.} We claim that for all orders of derivative $j,l\ge 0$ and
exponents $\alpha\ge 0$ 
\begin{align*}
\int_{\mathbb{R}^2}|\partial_1^j\partial_2^l\psi|d(x,0)^\alpha dx<\infty\quad\mbox{provided}\;\alpha\le j+2,\\
\int_{\mathbb{R}^2}\big|\partial_1^j\partial_2^l|\partial_1|\psi\big|d(x,0)^\alpha dx<\infty\quad\mbox{provided}\;\alpha\le j.
\end{align*}
In view of its definition on the Fourier level $\psi(k)=\exp(-|k_1|^3-k_2^2)$, cf. (\ref{eq14bis}),
the kernel tensorizes into a Gaussian in $x_2$ and a kernel $\varphi(x_1)$. Hence the above statements reduce to
\begin{align*}
\int_{\mathbb{R}}|\partial_1^j\varphi|(|x_1|+1)^{j+2} dx_1<\infty\quad\mbox{and}\quad
\int_{\mathbb{R}}\big|\partial_1^j|\partial_1|\varphi\big|(|x_1|+1)^j dx_1<\infty.
\end{align*}
By Cauchy-Schwarz and $\int_{\mathbb{R}}(|x_1|+1)^{-2}dx_1<\infty$, these statements in turn reduce to
\begin{align*}
\int_{\mathbb{R}}|\partial_1^j\varphi|^2(|x_1|+1)^{2(j+3)} dx_1,\;\int_{\mathbb{R}}\big|\partial_1^j|\partial_1|\varphi\big|^2(|x_1|+1)^{2(j+1)} dx<\infty.
\end{align*}
By Plancherel, this can be expressed as
\begin{align*}
\int_{\mathbb{R}}\big|\partial_{k_1}^{j+3}(k_1^j\varphi)\big|^2dk_1,\;
\int_{\mathbb{R}}\big|\partial_{k_1}^{j+1}(k_1^j|k_1|\varphi)\big|^2 dk_1<\infty.
\end{align*}
These statements hold since near $k_1=0$, $k_1^j\varphi=k_1^j\exp(-|k_1|^3)$ has a bounded $(j+3)$-th derivative 
and $k_1^j|k_1|\varphi=k_1^j|k_1|\exp(-|k_1|^3)$ has a bounded $(j+1)$-th derivative.

\medskip
%%%%%%%%%%%%%%%%%%%%%%%%%%%%%%%%%%%%%%%%%%%%%%%%%%%%%%%%%%%%%%%%%%%%%%%%%%%%%%%%%%%%%%%%%%%%%%%%%%%%%%%%%%%%%

{\it Step} \arabic{L2}.\label{scaling}\refstepcounter{L2}
{\it Scaling}. We claim that for all orders of derivative $j,l\ge 0$,
exponents $\alpha\ge0$ and convolution parameters $T>0$,
\begin{align}
\int_{\mathbb{R}^2}|\partial_1^j\partial_2^l\psi_T|d(x,0)^\alpha dx
&\lesssim(T^{1/3})^{-j-\frac{3}{2}l+\alpha}, \, \quad\mbox{provided}\;\alpha\le j+2,\label{e03}\\
\int_{\mathbb{R}^2}\big|\partial_1^j\partial_2^l|\partial_1|\psi_T\big|d(x,0)^\alpha dx
&\lesssim(T^{1/3})^{-j-\frac{3}{2}l-1+\alpha},  \, \quad\mbox{provided}\;\alpha\le j.
\end{align}
This follows from Step \ref{moments} via the (anisotropic) change of variables 
$x_1=T^{1/3}\hat x_1$, $x_2=(T^{1/3})^\frac{3}{2}\hat x_2$,
which of course implies $\partial_1=(T^{1/3})^{-1}\hat\partial_1$ 
and $\partial_2=(T^{1/3})^{-\frac{3}{2}}\hat\partial_2$.
Furthermore, $\psi_T$ is just defined such that $\psi_T(x) dx=\psi(\hat x) d\hat x$; 
likewise, $d$ is defined such that $d(x,0)=T^{1/3}d(\hat x,0)$.  

\medskip
%%%%%%%%%%%%%%%%%%%%%%%%%%%%%%%%%%%%%%%%%%%%%%%%%%%%%%%%%%%%%%%%%%%%%%%%%%%%%%%%%%%%%%%%%%%%%%%%%%%%%%%%%%%

{\it Step} \arabic{L2}.\label{positive}\refstepcounter{L2}
{\it H\"older norms of positive exponent}. For $\alpha\in(0,1)\cup(1,\frac{3}{2})$ and any periodic distribution $f$ we claim
\begin{align*}
[f]_\alpha\lesssim\sup_{T>0}(T^{1/3})^{-\alpha}\|T{\mathcal A}f_T\|,
\end{align*}
with the (implicit) understanding that $f$ is a continuous function (even continuously differentiable
in $x_1$ in case of $\alpha\in(1,\frac{3}{2})$) if the RHS is finite.
Here comes the argument: By homogeneity we may assume $\sup_{T>0}(T^{1/3})^{-\alpha}\|T{\mathcal A}f_T\|$ $\le 1$;
by the semi-group property (\ref{f24}) in form of $\partial_1^j\partial_2^lf_T$ 
$=\partial_1^j\partial_2^l\psi_\frac{T}{2}*f_\frac{T}{2}$ for all integers $j,l\ge 0$ and Step \ref{scaling} 
we may upgrade our assumption to
\begin{align}\label{eq12}
\|\partial_1^j\partial_2^l{\mathcal A}f_T\|\lesssim \frac{1}{T}(T^{1/3})^{-j-\frac{3}{2}l+\alpha}
\quad\mbox{for all}\;T\in(0,\infty).
\end{align}
Reasoning via $\partial_1^j\partial_2^lf_T$
$=\partial_1^j\partial_2^l\psi_{T-1}*f_1$ and using the finiteness of $\|f_1\|$, we have
\begin{align}\label{eq13}
\lim_{T\uparrow\infty}\|\partial_1^j\partial_2^lf_T\|=0\quad\mbox{provided}\;j+l>0,
\end{align}
which we need as a purely qualitative ingredient.
From the form $\psi_T(k)=\exp(-T(|k_1|^3+k_2^2))$, cf. (\ref{eq14bis}), 
we learn that $(0,\infty)\times\mathbb{R}^2\ni (T,x)\mapsto\psi_T(x)$
is a smooth solution of $(\partial_T+{\mathcal A})\psi_T=0$. 
Since by Step \ref{scaling}, $x\mapsto \psi_T(x)$ and all its derivatives are integrable,
also all its derivatives in $T$ are integrable in $x$. Hence for our periodic distribution $f$,
also $(0,\infty)\times\mathbb{R}^2\ni (T,x)\mapsto f_T(x)$ is a smooth solution of $(\partial_T+{\mathcal A})f_T=0$,
so that we have the representation
$\partial_1^j\partial_2^l(f_t-f_T)$ $=\int_t^T\partial_1^j\partial_2^l{\mathcal A}f_sds$
and thus by (\ref{eq12}) the estimate
\begin{align*}
\|\partial_1^j\partial_2^l(f_t-f_T)\|\lesssim\int_t^T(s^{1/3})^{-j-\frac{3}{2}l+\alpha}\frac{ds}{s}
\quad\mbox{for all}\;0<t<T<\infty.
\end{align*}
We use this estimate in two ways: On the one hand, 
\begin{align}\label{eq14}
\|\partial_1^j\partial_2^l(f_\tau-f_T)\|\lesssim (T^{1/3})^{-j-\frac{3}{2}l+\alpha}
\quad\mbox{provided}\;\alpha>j+\frac{3}{2}l
\end{align}
for all $T>\tau$. {In particular, for $j=l=0$, by passing to the limit $\tau<T\to 0$, we deduce that $f$ is a {continuous} function and the inequality \eqref{eq14} holds if one 
replaces $f_\tau$ by $f$.}
On the other hand, appealing to (\ref{eq13}),
\begin{align}\label{eq15}
\|\partial_1^j\partial_2^l f_T\|\lesssim (T^{1/3})^{-j-\frac{3}{2}l+\alpha}
\quad\mbox{provided}\;\alpha<j+\frac{3}{2}l.
\end{align}

\medskip

Equipped with (\ref{eq14}) \& (\ref{eq15}), we are in the position to conclude.
We first deal with the case of $\alpha\in(0,1)$; let two points $x\not=y$ be given. From
$f_T(y)-f_T(x)$ $=\int_0^1(y-x)_1\partial_1f_T(sy+(1-s)x)ds$ $+\int_0^1(y-x)_2\partial_2f_T(sy+(1-s)x)ds$
we obtain by definition of the metric $d$ that $|f_T(y)-f_T(x)|$ 
$\le\|\partial_1f_T\|d(y,x)$ $+\|\partial_2f_T\|d(y,x)^\frac{3}{2}$ and thus by the triangle inequality
\begin{align}\label{eq16}
|f(y)-f(x)|\le 2\|f-f_T\|+\|\partial_1f_T\|d(y,x)+\|\partial_2f_T\|d(y,x)^\frac{3}{2} \quad {\textrm{for } x,y\in (0,1]^2.}
\end{align}
Inserting (\ref{eq14}) for $(j,l)=(0,0)$ (where we need $\alpha>0$) and (\ref{eq15}) for $(j,l)=(1,0),(0,1)$ 
(where we need $\alpha<1$) we obtain
\begin{align*}
|f(y)-f(x)|\lesssim(T^{1/3})^{\alpha}+
(T^{1/3})^{-1+\alpha}d(y,x)+(T^{1/3})^{-\frac{3}{2}+\alpha}d(y,x)^\frac{3}{2}  \quad {\textrm{for } x,y\in (0,1]^2.}
\end{align*}
Optimizing in $T$ through the choice of $T^{1/3}=d(y,x)$ we obtain $|f(y)-f(x)|\lesssim d(y,x)^{\alpha}$
and thus $f\in C^\alpha$. 

\medskip 

The argument for $\alpha\in(1,\frac{3}{2})$ is similar but slightly more involved; we just point out the
changes w.r.t. the previous case. First, passing to the limit $\tau<T\to 0$ in \eqref{eq14} for $j=1$, $l=0$, one deduces that $\partial_1 f$ is a continuous function. Then, we use the identity
\begin{align*}
\lefteqn{f_T(y)-f_T(x)-(y-x)_1\partial_1f_T(x)}\nonumber\\
&=\int_0^1(y_2-x_2)\partial_2f_T(y_1,sy_2+(1-s)x_2)ds\\
&+\int_0^1(y_1-x_1)^2\partial_1^2f_T(sy_1+(1-s)x_1,x_2)\, (1-s) ds,
\end{align*}
yielding the inequality
\begin{align*}
\lefteqn{|f(y)-f(x)-(y-x)_1\partial_1 f(x)|}\nonumber\\
&\le 2\|f-f_T\|+\|\partial_1(f-f_T)\|d(y,x)+\|\partial_2f_T\|d(y,x)^\frac{3}{2}+
\frac{1}{2}\|\partial_1^2f_T\|d(y,x)^2.
\end{align*}
One then appeals to (\ref{eq14}) for $(j,l)=(0,0),(1,0)$ for the first two RHS terms (where one needs $\alpha>1$) and
to (\ref{eq15}) for $(j,l)=(0,1),(2,0)$ for the last two terms (where one needs $\alpha<\frac{3}{2}$). 

\medskip
%%%%%%%%%%%%%%%%%%%%%%%%%%%%%%%%%%%%%%%%%%%%%%%%%%%%%%%%%%%%%%%%%%%%%%%%%%%%%%%%%%%%%%%%%%%%%%%%%%%%%%%%%%%%%%%%%

{\it Step} \arabic{L2}.\label{negative}\refstepcounter{L2}
{\it {H\"older norms of negative exponent}}. For $\beta\in(-\frac{3}{2},0)$ 
and any periodic distribution $f$ of spatial vanishing average we claim
\begin{align*}
\sup_{T>0}(T^{1/3})^{-\beta}\|f_T\|\lesssim[f]_\beta.
\end{align*}
{The same proof also concludes point ii) of Lemma \ref{Lequiv} when the convolution is carried by a general (symmetric) mask $\phi$ instead of the heat kernel $\psi$. (A different argument is given at footnote \ref{ft} below). We will also prove that the inequality remains true for periodic distributions $f$ of {\it arbitrary spatial average} provided that $\sup$ is restricted over $T\in (0,1]$
(respectively, over $\ell\in (0,1]$ in \eqref{supl_numa}).} 

{Assume also for the moment that $f$ is of vanishing average on $(0,1]^2$.} 
By Definition \ref{definitionCbeta} of $[\cdot]_\beta$, the desired inequality is a consequence of the following: For any $\alpha\in(0, \frac{3}{2})$,
any integers $j,l\ge 0$ and any periodic function $u$ we have 
\begin{align}\label{eq19}
\|\partial_1^j\partial_2^lu_T\|\lesssim (T^{1/3})^{-j-\frac{3}{2}l+\alpha}[u]_\alpha
\quad\mbox{provided}\;\alpha\leq j+\frac{3}{2}l.
\end{align}
{In fact, in case of $\beta\in(-1, 0)$, 
we need (\ref{eq19}) for $(j,l,\alpha)=(1,0,\beta+1),(0,1,\beta+\frac{3}{2})$;
in case of $\beta\in(-\frac{3}{2},-1]$, we need (\ref{eq19}) for $(j,l,\alpha)=(2,0,\beta+2),(0,1,\beta+\frac{3}{2})$.
Statement (\ref{eq19}) is an immediate consequence of Step \ref{scaling} enriched by
the obvious cancellations (which follow from integrations by parts) 
\begin{align*}
\int_{\mathbb{R}^2}\partial_1^j\partial_2^l\psi_T=0&\quad\mbox{provided}\;(j,l)\not=(0,0),\\
\int_{\mathbb{R}^2}x_1\partial_1^j\partial_2^l\psi_T=0&\quad\mbox{provided}\;(j,l)\not=(0,0),(1,0).
\end{align*}
Indeed, these allow to write in the first and second case, respectively,
\begin{align*}
\partial_1^j\partial_2^lu_T(x)&=\int_{\mathbb{R}^2}\partial_1^j\partial_2^l\psi_T(x-y)(u(y)-u(x))dy,\\
\partial_1^j\partial_2^lu_T(x)&=\int_{\mathbb{R}^2}\partial_1^j\partial_2^l\psi_T(x-y)
(u(y)-u(x)-(y-x)_1\partial_1u(x))dy,
\end{align*}
which we use for  $\alpha\in (0,1]$ and $\alpha\in (1, \frac32)$, respectively.
}

Finally, the case of $f$ of nonvanishing average comes by decomposing $$f=\int_{[0,1)^2}f\, dx+\left(f-\int_{[0,1)^2}f\, dx\right)$$ so that the desired inequality in Step \ref{negative} with $\sup$ restricted to $T\in (0,1]$ follows by using that $\beta\le 0$ and $T\le 1$.

\medskip
%%%%%%%%%%%%%%%%%%%%%%%%%%%%%%%%%%%%%%%%%%%%%%%%%%%%%%%%%%%%%%%%%%%%%%%%%%%%%%%%%%%%%%%%%%%%%%%%%%%%%%%%%

{\it Step} \arabic{L2}.\label{schauder}\refstepcounter{L2}
{\it Proof of Lemma \ref{L2}}. For a periodic function $u$ and a periodic distribution $f$ related by
${\mathcal L}u=Pf$ we claim the following Schauder estimate: 
$[u]_\alpha\lesssim[f]_{\alpha-2}$ for $\alpha\in(\frac{1}{2},1)\cup(1,\frac{3}{2})$.
Here comes the argument: Without loss of generality (w.l.o.g.), we may  assume that $f$ has vanishing spatial average, so that
by Step \ref{negative} we have 
\begin{align*}
\sup_{T>0}(T^{1/3})^{2-\alpha}\|f_T\|\lesssim [f]_{\alpha-2};
\end{align*}
Using the semi-group property in form of $|\partial_1|f_T$ $=(|\partial_1|\psi_\frac{T}{2})*f_\frac{T}{2}$
and appealing to Step \ref{scaling}, we upgrade this to 
\begin{align*}
\sup_{T>0}(T^{1/3})^{3-\alpha}\||\partial_1|f_T\|\lesssim [f]_{\alpha-2}.
\end{align*}
We may rewrite the relation between $u$ and $f$ as
${\mathcal A}u=|\partial_1|f$ (in a distributional sense) so that the above takes the form of
\begin{align*}
\sup_{T>0}(T^{1/3})^{-\alpha}\|T{\mathcal A}u_T\|\lesssim [f]_{\alpha-2}.
\end{align*}
Therefore the claim follows from Step \ref{positive} {where it is essential to have $\alpha\neq 1$. }
{The same argument also leads to $[Ru]_\alpha\lesssim [f]_{\alpha-2}$ by redoing the above estimates for $\partial_1 f$ and ${\mathcal A} Ru=\partial_1f$. }

\medskip
%%%%%%%%%%%%%%%%%%%%%%%%%%%%%%%%%%%%%%%%%%%%%%%%%%%%%%%%%%%%%%%%%%%%%%%%%%%%%%%%%%%%%%%%%%%%%%%%%%%%%%%%%

{\it Step} \arabic{L2}.\label{equiv}\refstepcounter{L2}
{\it Second half of equivalence for negative exponents}.
For $\beta\in(-\frac{3}{2},-1)\cup(-1,-\frac{1}{2})\cup(-\frac{1}{2},0)$
and any periodic distribution $f$ of vanishing average we claim
\begin{align*}
[f]_\beta\lesssim\sup_{T>0}(T^{1/3})^{-\beta}\|f_T\|.
\end{align*}
With help of Fourier series, we see that there exists a periodic distribution $u$ of vanishing average
such that ${\mathcal A}u=f$ distributionally. {W.l.o.g., we assume that $f\neq 0$.} By homogeneity we may assume 
$\sup_{T>0}(T^{1/3})^{-\beta}\|f_T\|$ $=1$, so that we have
\begin{align*}
\sup_{T>0}(T^{1/3})^{-\beta}\|{\mathcal A}u_T\|\le 1,
\end{align*}
which by the semi-group property and Step \ref{scaling} implies for all $j, \ell\geq 0$:
\begin{align}
\sup_{T>0}(T^{1/3})^{j+\frac{3}{2}l-\beta-3}\|T{\mathcal A}\partial_1^j\partial_2^lu_T\|&\lesssim 1,
\label{eq23}\\
\sup_{T>0}(T^{1/3})^{j+\frac{3}{2}l+1-\beta-3}\|T{\mathcal A}\partial_1^j\partial_2^l|\partial_1|u_T\|
&\lesssim 1.\label{eq24}
\end{align}
As $f={\cal A}u=\partial_1(\partial_1|\partial_1|u)+\partial_2(-\partial_2u)$, by definition of $[\cdot]_\beta$, for having $[f]_\beta\lesssim 1$ it suffices to show
\begin{align}
[\partial_1|\partial_1|u]_{\beta+1}&\lesssim 1\quad\mbox{for}\;\beta>-1,\label{eq20}\\
[|\partial_1|u]_{\beta+2}&\lesssim 1\quad\mbox{for}\;\beta<-1,\label{eq21}\\
[\partial_2u]_{\beta+\frac{3}{2}}&\lesssim 1.\label{eq22}
\end{align}
Estimate (\ref{eq22}) follows from (\ref{eq23}) with $(j,l)=(0,1)$ by Step \ref{positive} with
$\alpha=
\beta+\frac{3}{2}$; 
estimates (\ref{eq20}) and (\ref{eq21}) follow from (\ref{eq24}) with
$(j,l)=(1,0)$ and $(j,l)=(0,0)$, respectively. 

\medskip

{\it Step} \arabic{L2}.\label{25}\refstepcounter{L2}
{\it Proof of the norm equivalence (\ref{eq25})}. 
The difference with Steps \ref{negative} and \ref{equiv} is twofold: On the one side, the range of $T$ is restricted to $(0,1]$; on the other side, the norm equivalence is claimed for periodic $f$'s without vanishing average. To deal with the latter, we decompose $f=\int_{[0,1)^2}f\, dx+\left(f-\int_{[0,1)^2}f\, dx\right)$ and note that obviously, $$\left[\int_{[0,1)^2}f\, dx\right]_\beta,\,  \sup_{T\geq 1} (T^{1/3})^{-\beta} \left|\int_{[0,1)^2}f_T\, dx\right|\leq \left|\int_{[0,1)^2}f\, dx\right|\leq [f]_\beta, \, \sup_{T\geq 1} (T^{1/3})^{-\beta} \|f_T\|.$$ 
We now argue that for a periodic distribution $f$ (of period 1) of vanishing average we have
\begin{align*}
\|f_T\|\lesssim\exp(-T)\|f_1\|\quad\mbox{for all}\;T\ge 1.
\end{align*}
By the semi-group property it suffices to show that for any periodic function $f$ of vanishing average
\begin{align*}
\|f_T\|\lesssim\exp(-T)\|f\|\quad\mbox{for all}\;T\ge 0.
\end{align*}
Since by Step \ref{scaling}, $\|f_T\|\lesssim\|f\|$ for any $T\ge 0$, it is enough to focus on $T\ge 1$.
Using the explicit form $\psi_T(k)=\exp(-T(|k_1|^3+k_2^2))$ of the convolution kernel, cf. (\ref{eq14bis}),
we obtain because of the vanishing spatial average of $f$ in form of
$f(k=0)=0$ that $|f_T(k)|$ $=\exp(-T(|k_1|^3+k_2^2))|f(k)|$
$\le\exp(1-T)\exp(-(|k_1|^3+k_2^2))|f(k)|$, where in the second step we used $T\ge 1$ and $|k_1|^3+k_2^2\geq 1$ for every $k\in (2\pi \Z)^2\setminus \{(0,0)\}$.
Hence we obtain by the Fourier series representation of $f_T$, Cauchy-Schwarz in frequency space,
and Plancherel that
\begin{align*}
\|f_T\|&\le\sum_{k}|f_T(k)|\le\exp(1-T)\sum_{k}\exp(-(|k_1|^3+k_2^2))|f(k)|\\
&\le\exp(1-T)\big(\sum_{k}\exp(-2(|k_1|^3+k_2^2))\sum_{k}|f(k)|^2\big)^\frac{1}{2}\\
&\lesssim \exp(-T)\big(\int_{[0,1)^2}f^2\big)^\frac{1}{2}
\le\exp(-T)\|f\|.
\end{align*}
\end{proof}

\medskip

\begin{rem}
The arguments presented above yield also the following equivalences:

\begin{enumerate}
\item[1.] If $\alpha\in (0, \frac 32){ \setminus \{1\}}$ and $f$ is a periodic function in ${[0,1)^2}$, then 
$$[f]_\alpha\quad  \sim \quad  \sup_{T>0} (T^{1/3})^{-\alpha} \|T\AAA f_T\|;$$

\item[2.] If $\beta\in (-\frac 32,0){ \setminus \{-1, -\frac12\}}$ and $f$ is a periodic distribution in ${[0,1)^2}$ with vanishing average, then 
$$[f]_\beta\quad  \sim \quad  \sup_{{ T>0}} (T^{1/3})^{-\beta} \|T\AAA f_T\|;$$
the equivalence still holds if the above $\sup$ is restricted to $T\in (0,1]$.
\end{enumerate}
\end{rem}

\bigskip

\begin{rem}
\label{rem:hoelder_inclu}
We have the following inclusion of our H\"older spaces of periodic functions: 
$$\textrm{$C^\alpha\subset C^\beta$ for every $-\frac32<\beta<\alpha<\frac32$, $\alpha, \beta\neq 0$}.$$ Indeed, if $\beta>0$, we know by Lemma \ref{lem:linftyhoelder} in Appendix that $[\cdot]_\beta\lesssim [\cdot]_\alpha$. If $\alpha<0$ and {$\alpha, \beta\notin \{-1, -\frac12\}$}, then within the characterization \eqref{eq25} of a periodic distribution $f$ one has $T^{-\beta/3}\|f_T\|\leq T^{-\alpha/3}\|f_T\|$ for all $T\in (0,1]$ so that $[f]_\beta\lesssim [f]_\alpha$.  If $\beta<0< \alpha$, $\beta\neq -1, -\frac12$ and $f\in C^\alpha$, then we have by Lemma \ref{lem:linftyhoelder} and 
characterization \eqref{eq25}: 
$[f]_\alpha\geq \|f-\int_{[0,1)^2}f\, dx\|\geq \sup_{T\in (0,1]} \|f_T-\int_{[0,1)^2}f\, dx\|\geq \sup_{T\in (0,1]}T^{-\beta/3}\|f_T-\int_{[0,1)^2}f\, dx\| \gtrsim [f-\int_{[0,1)^2}f\, dx]_\beta$; in conclusion, 
$[f]_\alpha+\|f\|\gtrsim [f]_\beta$. It remains to treat the critical cases $\alpha, \beta\in \{-1, -\frac12\}$. For that, we only treat here the case $\beta=-\frac12$ and $\alpha\in (\beta, 0)$; by Definition \ref{definitionCbeta}, we consider an arbitrary decomposition  $f=\int_{[0,1)^2}f\, dx+\partial_1 g+\partial_2 h$ with $g\in C^{\alpha+1}$, 
$h\in C^{\alpha+3/2}$ of vanishing average. By Lemma \ref{lem:linftyhoelder}, we know that
$[g]_{\alpha+1}\gtrsim [g]_{\beta+1}$ as well as  $[h]_{\alpha+3/2}\gtrsim [h]_{\beta+3/2}$. Passing to infimum over all these decompositions, we deduce $[f]_\alpha\gtrsim [f]_\beta$. 
\end{rem}

%%%%%%%%%%%%%%%%%%%%%%%%%%%%
%%%%%%%%%  PRODUCT  of C^\alpha \cdot  C^\beta %%%%%%%%%
%%%%%%%%%%%%%%%%%%%%%%%
%%%%%%%%%%%%%%%%%%%%%%%%%%%%%%%%%%%%%%%%%%%%%%%%%%%%%%%%%%%%%%%%%%%%%%%%%%%%%%%%%%%%%%%%%%%%%%%%%%%%%%%%%

\subsection{Product of $C^\alpha \cdot C^\beta$ if $\alpha+\beta>0$. Proof of Lemma \ref{L3} }

\medskip
\medskip

\begin{proof}[Proof of Lemma \ref{L3}]

Let $u\in C^\alpha$ and $f\in C^\beta$ with $\alpha>0> \beta$, $\alpha+\beta>0$, $\beta\neq -1, -\frac12$.

In the first part, we will prove \eqref{w23} for our semigroup \eqref{f24}, i.e., there exists a distribution denoted by $u f\in C^\beta$ with 
\be
\label{estim_norm}
[uf]_\beta\lesssim (\|u\|+[u]_\alpha) [f]_\beta
\ee
such that for every $T\in (0,1]$ 
\begin{align}\label{w25}
(T^{1/3})^{\alpha+\beta}[u]_\alpha[f]_\beta \gtrsim
\begin{cases} \|\lceil u, (\cdot)_T\rceil f\| &\quad \textrm{ if } \alpha \in (0,1],\\
\| \big(  \lceil u, (\cdot)_T\rceil-\partial_1 u \lceil x_1, (\cdot)_T\rceil \big) f\| &\quad \textrm{ if } \alpha \in (1, \frac32).
\end{cases}
\end{align}
In the second part, we will show how to extend \eqref{w25} to a general (symmetric) Schwartz mask $\phi$ in order to obtain \eqref{w23}.

In order to prove \eqref{w25}, without loss of generality (w.l.o.g.), we may assume that {\bf $f$ is of vanishing average}, i.e., $\int_{[0,1)^2}f\, dx=0$ {\bf and that
$[f]_\beta=1$}; indeed, first, if $f=0$ the conclusion is obvious, so by homogeneity, we can assume $[f]_\beta=1$. Second, we can replace $f$ by $f-\int_{[0,1)^2}f\, dx$ and use Step 1 to check that $\|\lceil u, (\cdot)_t\rceil 1\| \lesssim
[u]_\alpha t^{\alpha/3}$
provided that $\alpha\in (0,1]$ together with {$t^{\alpha/3}\leq [f]_\beta (t^{1/3})^{\alpha+\beta}$ for $t\in (0,1]$ as $\beta\leq 0$ and $[f]_\beta=1$} (the case $\alpha\in (1, \frac32)$ is treated by the same argument). 

\newcounter{L3} % proofstep = 0
\refstepcounter{L3} % increases value by 1

\nd {\it Step \arabic{L3}.\label{commut1}\refstepcounter{L3}}
For $\alpha\in(0, \frac{3}{2})$, and any two periodic functions $u$ and $f$ we have that
\begin{align}\label{eq18}
\left.\begin{array}{cl}
\mbox{for}\;\alpha\leq 1:&\|\lceil u,(\cdot)_T\rceil f\|\\
\mbox{for}\;\alpha>1:&\|(\lceil u,(\cdot)_T \rceil -\partial_1u \lceil x_1,(\cdot)_T\rceil)f\|
\end{array}\right\}\lesssim(T^{1/3})^\alpha[u]_\alpha\|f\|, \textrm{ for all } T>0.
\end{align}
Indeed, this follows from (\ref{e03}) via the representations
\begin{align}
\label{eq100}
-( \lceil u,(\cdot)_T \rceil f)(x)&=\int_{\mathbb{R}^2}\psi_T(x-y)(u(y)-u(x))f(y)dy,\\
\label{ap11}
-(\lceil u,(\cdot)_T\rceil-\partial_1u \lceil x_1,(\cdot)_T\rceil )f(x)&
=\int_{\mathbb{R}^2}\psi_T(x-y)(u(y)-u(x)-(y-x)_1\partial_1u(x))f(y)dy
\end{align}
and the definition of $[\cdot]_\alpha$.

\nd {\it Step  \arabic{L3}.\label{commut2}\refstepcounter{L3} }
For $\beta\in(-\frac{3}{2},0)\setminus \{-1, -\frac12\}$ and any periodic distribution $f$ of spatial vanishing average we have
\begin{align}\label{eq27}
\| \lceil x_1,(\cdot)_T\rceil f\|\lesssim [f]_\beta(T^{1/3})^{1+\beta}\quad\mbox{for all}\;T>0.
\end{align}

Indeed, %
\begin{align}\label{eq30}
( \lceil x_1,(\cdot)_T \rceil f)(x)=T^{1/3}\int_{\mathbb{R}^2}\tilde\psi_T(x-y)f(y)dy=T^{1/3}\tilde\psi_T*f(x),
\end{align}
where $\tilde\psi(x)=x_1\psi(x)$ and $\tilde\psi_T$ is related to $\tilde\psi$ analogously to (\ref{f25}). 
{Note that $|\tilde \psi(x)|\leq |\psi(x)| d(x,0)$ so that using (\ref{e03}) for $(j,l,\alpha)=(0,0,1)$ and $T=1$ we deduce that $\tilde \psi\in L^1(\R^2)$. Moreover, by the scaling of $\tilde\psi_T$, it follows that $\tilde\psi_T$ and $\tilde\psi$ have the same $L^1$-norm.}
We first argue that
\begin{align}\label{eq29}
\tilde \psi_{2T}={ 2^{2/3}}\tilde\psi_T*\psi_T.
\end{align}
For that, one writes, using that $*$ is Abelian,
\begin{align*}
2\tilde \psi_T*\psi_T(x)&=\int_{\R^2} \frac{x_1-y_1}{T^{1/3}} \psi_T(x-y)\psi_T(y)\, dy+\int_{\R^2} \frac{y_1}{T^{1/3}} \psi_T(y)\psi_T(x-y)\, dy\\
&=\frac{x_1}{T^{1/3}}\psi_{2T}(x)=2^{1/3}\tilde \psi_{2T}(x).
\end{align*}
In view of (\ref{eq29}) we may rewrite (\ref{eq30}) as
\begin{align*}
\left|( \lceil x_1,(\cdot)_{2T}\rceil f)(x)\right|=\left|{ 2^{2/3}} (2T)^{1/3}\int_{\mathbb{R}^2}\tilde\psi_T(x-y)f_T(y)dy\right|\leq { 2 T^{1/3} \|f_T\| \int_{\R^2} |\tilde \psi|\, dx}.
\end{align*}

Hence, 
\begin{align*}
\| \lceil x_1,(\cdot)_{2T} \rceil f\|\lesssim T^{1/3}\|f_T\|.
\end{align*}
Now (\ref{eq27}) follows from Step \ref{negative} in the proof of Lemma \ref{Lequiv}. 

\medskip

\nd {\it Step \arabic{L3}.\label{st1}\refstepcounter{L3} } \textit{For any $\tau,T>0$, we have}
\begin{align*}
[u]_\alpha [f]_\beta (T^{1/3})^{\alpha} (\tau^{1/3})^{\beta} \gtrsim
\begin{cases} \|\lceil u, (\cdot)_T\rceil f_\tau\| &\quad \textrm{ if } \alpha \in (0,1],\\
\| \big(  \lceil u, (\cdot)_T\rceil-\partial_1 u \lceil x_1, (\cdot)_T\rceil \big) f_\tau\| &\quad \textrm{ if } \alpha \in (1, \frac32).
\end{cases}
\end{align*}
{\it In particular, if $\tau=T>0$, 
then the above RHS is bounded by} $[u]_\alpha [f]_\beta (T^{1/3})^{\alpha+\beta}$. 

Indeed, this is a direct consequence of Step 1 and Lemma \ref{Lequiv}.

\nd {\it Step \arabic{L3}.\label{st2}\refstepcounter{L3} } \textit{For every $0<t < T$, we have }
\[  [u]_\alpha [f]_\beta  (T^{1/3})^{\alpha+\beta}\gtrsim  \begin{cases} \|\lceil u, (\cdot)_{T-t}\rceil f_t\| &\, \textrm{ if } \alpha \in (0,1],\\
\| \lceil u, (\cdot)_{T-t}\rceil f_t-\partial_1 u \lceil x_1, (\cdot)_{T}\rceil f+\big(\partial_1 u \lceil x_1, (\cdot)_{t}\rceil f\big)_{T-t}\| &\, \textrm{ if } \alpha \in (1, \frac32).
\end{cases}\]
To prove that, we distinguish two cases: 

{\it The dyadic case}. We start with the case of $t$ and $T$ being dyadically related (i.e., $t=T/2^n$).
By the semigroup property \eqref{f24}, we have that $(\lceil u, (\cdot)_{\tau}\rceil f_\tau)_{T-2\tau}=\big(u f_{2\tau}-(uf_\tau)_\tau\big)_{T-2\tau}=
(u f_{2\tau})_{T-2\tau}-(uf_\tau)_{T-\tau}$ which leads to a telescopic sum:
\be
\label{1dyad}
\lceil u, (\cdot)_{T-t}\rceil f_t=uf_T-(uf_t)_{T-t}=\sum_{\tau=T/2^k, \, k=1, \dots, n } (\lceil u, (\cdot)_{\tau}\rceil f_\tau)_{T-2\tau}.
\ee
$\bullet$ If $\alpha\in (0,1]$, then by Step 3, 
\begin{align*} 
\lVert \lceil u, (\cdot)_{T-t}\rceil f_t \rVert & \leq \sum_{\tau=T/2^k, \, k=1, \dots, n } \| \lceil u, (\cdot)_{\tau}\rceil f_\tau\|\\
& \lesssim  [u]_\alpha [f]_\beta \sum_{\tau=T/2^k, \, k=1, \dots, n } (\tau^{1/3})^{\alpha+\beta}\\
& \lesssim  [u]_\alpha [f]_\beta  (T^{1/3})^{\alpha+\beta} \sum_{k\geq 1} (\tfrac{1}{2^{(\alpha+\beta)/3}})^k \stackrel{\alpha+\beta>0}{\lesssim}  [u]_\alpha [f]_\beta (T^{1/3})^{\alpha+\beta}. 
\end{align*}

$\bullet$ If $\alpha\in (1, \frac32)$, then simple algebra yields
\be
\label{tstar}
 -\partial_1 u \lceil x_1, (\cdot)_{2\tau}\rceil f + \big(\partial_1 u \lceil x_1, (\cdot)_{\tau}\rceil f\big)_\tau
=-\partial_1 u \lceil x_1, (\cdot)_{\tau}\rceil f_{\tau}-\lceil \partial_1 u, (\cdot)_{\tau}\rceil 
\lceil x_1, (\cdot)_{\tau}\rceil f
\ee
which when convoluted with $\psi_{T-2\tau}$ leads to another telescopic sum:
\begin{align*} 
&-\partial_1 u \lceil x_1, (\cdot)_{T}\rceil f + \big(\partial_1 u \lceil x_1, (\cdot)_{t}\rceil f\big)_{T-t}\\
&\quad \quad=-\sum_{\tau=T/2^k, \, k=1, \dots, n } \big(\partial_1 u \lceil x_1, (\cdot)_{\tau}\rceil f_{\tau}+\lceil \partial_1 u, (\cdot)_{\tau}\rceil \lceil x_1, (\cdot)_{\tau}\rceil f \big)_{T-2\tau}.
\end{align*}
Adding this to \eqref{1dyad}, we obtain:
\begin{align*}
&\lceil u, (\cdot)_{T-t}\rceil f_t-\partial_1 u \lceil x_1, (\cdot)_{T}\rceil f+
\big(\partial_1 u \lceil x_1, (\cdot)_{t}\rceil f\big)_{T-t}\\
&=\sum_{\tau=T/2^k, \, k=1, \dots, n } \big(\lceil u, (\cdot)_{\tau}\rceil f_\tau-\partial_1 u \lceil x_1, (\cdot)_{\tau}\rceil f_{\tau}-\lceil \partial_1 u, (\cdot)_{\tau}\rceil \lceil x_1, (\cdot)_{\tau}\rceil f \big)_{T-2\tau}.
\end{align*}
The first contribution to the summand is estimated by Step 3: 
$$\|\big( \lceil u, (\cdot)_{\tau}\rceil -\partial_1 u \lceil x_1, (\cdot)_{\tau}\rceil \big)f_{\tau}\|\lesssim [u]_\alpha [f]_\beta 
(\tau^{1/3})^{\alpha+\beta};$$
the second contribution is estimated by Steps 1-2 and Lemma \ref{lem:linftyhoelder}:
\be
\label{claimul}
\| \lceil \partial_1 u, (\cdot)_{\tau}\rceil 
\lceil x_1, (\cdot)_{t}\rceil f\|\lesssim [\partial_1 u]_{\alpha-1} \tau^{\frac{\alpha-1}3}  \,
\| \lceil x_1, (\cdot)_{t}\rceil f\| \lesssim 
[f]_\beta [u]_{\alpha} \tau^{\frac{\alpha-1}3} t^{\frac{\beta+1}3},
\ee
for every $t, \tau>0$.
The desired estimate follows as in the case $\alpha\in (0,1]$.

\medskip
{\it The nondyadic case}.
In the general case of $t$ not dyadically related to $T$, we choose $\ttt \in [T/2, T)$ that is dyadically related to $t$ (in particular, $\ttt\geq t$). 

$\bullet$ If $\alpha\in (0,1]$, then we have
\be
\label{dstar}
 \lceil u, (\cdot)_{T-t}\rceil f_t=\big( \lceil u, (\cdot)_{\ttt-t}\rceil f_t\big)_{T-\ttt}+ \lceil u, (\cdot)_{T-\ttt}\rceil f_\ttt
 \ee
  so that by the dyadic case and Step 3, we conclude:
$$\| \lceil u, (\cdot)_{T-t}\rceil f_t\|\lesssim \| \lceil u, (\cdot)_{\ttt-t}\rceil f_t\|+\| \lceil u, (\cdot)_{T-\ttt}\rceil f_\ttt\|\lesssim  [u]_\alpha [f]_\beta (T^{1/3})^{\alpha+\beta}.$$

$\bullet$ If $\alpha\in (1, \frac32)$, starting from \eqref{dstar}, we have
\begin{align}
\nonumber & \lceil u, (\cdot)_{T-t}\rceil f_t-\partial_1u \lceil x_1, (\cdot)_{T}\rceil f+\big(\partial_1u \lceil x_1, (\cdot)_{t}\rceil f \big)_{T-t}\\
\label{no1}&=\bigg( \lceil u, (\cdot)_{\ttt-t}\rceil f_t-\partial_1u \lceil x_1, (\cdot)_{\ttt}\rceil f+ \big(\partial_1u \lceil x_1, (\cdot)_{t}\rceil f \big)_{\ttt-t}   \bigg)_{T-\ttt}\\
\label{no2} &+ \lceil u, (\cdot)_{T-\ttt}\rceil f_\ttt-\partial_1u \lceil x_1, (\cdot)_{T}\rceil f+\big(\partial_1u \lceil x_1, (\cdot)_{\ttt}\rceil f \big)_{T-\ttt} .
\end{align}
The dyadic case (applied to $t$ and $\ttt$) yields the estimate of the first term \eqref{no1} by $\lesssim  [u]_\alpha [f]_\beta (T^{1/3})^{\alpha+\beta}$, while the second \eqref{no2} is estimated using an identity similar to the computation \eqref{tstar}:
\begin{align*}
&\| \lceil u, (\cdot)_{T-\ttt}\rceil f_\ttt-\partial_1u \lceil x_1, (\cdot)_{T}\rceil f+\big(\partial_1u \lceil x_1, (\cdot)_{\ttt}\rceil f \big)_{T-\ttt}\|\\
&\leq \|\lceil u, (\cdot)_{T-\ttt}\rceil f_\ttt-\partial_1 u  \lceil x_1, (\cdot)_{T-\ttt}\rceil f_\ttt\|+\|\lceil \partial_1 u, (\cdot)_{T-\ttt}\rceil \lceil x_1, (\cdot)_{\ttt}\rceil f\|\\
&\lesssim [u]_\alpha [f]_\beta \big( (T-\ttt)^{\alpha/3} \ttt^{\beta/3}+(T-\ttt)^{(\alpha-1)/3} \ttt^{(\beta+1)/3}\big)\lesssim [u]_\alpha [f]_\beta T^{(\alpha+\beta)/3}
\end{align*}
where we used Step 3 and \eqref{claimul}. 

\nd {\it Step \arabic{L3}.\label{st3}\refstepcounter{L3} }\textit{For a  subsequence, 
$$
\begin{cases}
\big\{uf_{\frac{1}{2^n}}\big\}_{n \uparrow \infty} &\quad \textrm{ if } \alpha\in (0,1],\\ 
\big\{uf_{\frac{1}{2^n}}-\partial_1 u  \lceil x_1, (\cdot)_{\frac{1}{2^n}}\rceil f\big \}_{n \uparrow \infty} &\quad \textrm{ if } \alpha\in (1, 3/2),
\end{cases}
$$
converges (in a distributional sense) to a distribution denoted 
$uf$ that belongs to $C^\beta$ such that \eqref{estim_norm} and \eqref{w25} hold.}

$\bullet$ If $\alpha\in (0,1]$, we set $t=1/2^n$. We want to prove that $[uf_t]_\beta\lesssim ([u]_\alpha+\|u\|) [f]_\beta$ which by Lemma \ref{Lequiv} {(as we assumed $\beta\neq -\frac12, -1$)}, it is equivalent to checking that
\be
\label{eq77}
\|(uf_t)_T\|\lesssim ([u]_\alpha+\|u\|) [f]_\beta T^{\beta/3}, \quad \textrm{for all }\, T\in (0,1]. 
 \ee
If $T\in (0, t]$, then by Step 3 and Lemma \ref{Lequiv}, we have 
\begin{align*}
\|(uf_t)_T\|&\leq \|(uf_t)_{T}-u f_{T+t}\|+\|uf_{T+t}\|\\
&\lesssim [u]_\alpha [f]_\beta (T^{1/3})^{\alpha}(t^{1/3})^{\beta}+\|u\|[f]_\beta (T+t)^{\beta/3} \lesssim (\|u\|+[u]_\alpha) [f]_\beta (T^{1/3})^{\beta}
\end{align*}
because of $\alpha>0>\beta$, $(T^{1/3})^{\alpha}\leq 1$ and $(t^{1/3})^{\beta}, (T+t)^{\beta/3} \leq (T^{1/3})^{\beta}$.

If $T\in (t, 1]$, then we have by Step 4 and Lemma~\ref{Lequiv},
\begin{align*}
\|(uf_t)_T\|= &\|\big((uf_t)_{T-t}\big)_{t}\|\lesssim \|(uf_t)_{T-t}\|\leq \|(uf_t)_{T-t}-u f_{T}\|+\|uf_{T}\|  \\
&\lesssim [u]_\alpha [f]_\beta (T^{1/3})^{\alpha+\beta}+\|u\|[f]_\beta T^{\beta/3} \lesssim ([u]_\alpha+\|u\|) [f]_\beta T^{\beta/3}.
\end{align*}
Therefore, \eqref{eq77} holds. By Lemma \ref{lem:compactness} in Appendix, for a subsequence $n\to \infty $ with $t=1/2^n$, we have $uf_t$ converges to a distribution that we denote $uf$ which belongs to $C^\beta$ and 
$[uf]_\beta\lesssim   ([u]_\alpha+\|u\|) [f]_\beta$, so \eqref{estim_norm} holds. Moreover, for every $T>0$ we have $\psi_{T-t}\to \psi_T$ in the sense of Schwartz functions as $t\to 0$ so that $(uf_t)_{T-t}\to (uf)_T$ uniformly. By Step 4, passing at the limit $t\to 0$, we conclude $\lVert (uf)_T-uf_T \rVert \lesssim [u]_\alpha [f]_\beta (T^\frac{1}{3})^{\alpha+\beta}$ which is \eqref{w25}.

$\bullet$ If $\alpha\in (1, 3/2)$, we want to show  
$$\|\big(uf_{t}-\partial_1 u  \lceil x_1, (\cdot)_{t}\rceil f\big)_T\| \lesssim ([u]_\alpha+\|u\|) [f]_\beta T^{\beta/3}, \quad \textrm{for all }\, T\in (0,1],
$$
for every $t=1/2^n$, $n\in \N$. 

If $T\in (0, t]$, then by Step 3, \eqref{eq27} and Lemma \ref{Lequiv} we have 
\begin{align*}
&\|\big(uf_t-\partial_1 u  \lceil x_1, (\cdot)_{t}\rceil f\big)_T\|= \|(uf_t)_T-uf_{T+t}+uf_{T+t}-\big(\partial_1 u  \lceil x_1, (\cdot)_{t}\rceil f\big)_T\|  \\
&\lesssim \|\lceil u, (\cdot)_{T}\rceil f_t-\partial_1u \lceil x_1, (\cdot)_{T}\rceil f_t\|+\|\partial_1u \lceil x_1, (\cdot)_{T}\rceil f_t\|+
\|uf_{T+t}\|+\|\partial_1 u  \lceil x_1, (\cdot)_{t}\rceil f \|\\
&\lesssim [u]_\alpha [f]_\beta (T^{1/3})^{\alpha}(t^{1/3})^{\beta} +\|\partial_1 u\|[f]_\beta T^{1/3}(T+t)^{\beta/3}+\|u\|[f]_\beta (T+t)^{\beta/3}+\|\partial_1 u\|[f]_\beta t^{(\beta+1)/3}\\
&\lesssim ([u]_\alpha+\|u\|) [f]_\beta  (T^{1/3})^{\beta}
\end{align*}
where we used $\|\partial_1 u\|\lesssim [u]_\alpha$ (by Lemma \ref{lem:linftyhoelder}) and a slightly different version of \eqref{eq27}: 
$$\|\lceil x_1, (\cdot)_{T}\rceil f_t\|\lesssim [f]_\beta T^{1/3} (T+t)^{\beta/3}.$$ In fact, for the latter estimate, we use the same strategy as in Step 2: by \eqref{eq30}-\eqref{eq29} and Lemma \ref{Lequiv}, 
we have that $\|\lceil x_1, (\cdot)_{2T}\rceil f_t\| \lesssim T^{1/3} \|f_{t+T}\| \lesssim [f]_\beta T^{1/3} (T+t)^{\beta/3}$.

If $T\in (t, 1]$, then we have by Step 4, Lemma \ref{Lequiv} and \eqref{eq27},
\begin{align*}
&\|\big(uf_t-\partial_1 u  \lceil x_1, (\cdot)_{t}\rceil f\big)_T\|\lesssim \|(uf_t)_{T-t}-\big(\partial_1 u  \lceil x_1, (\cdot)_{t}\rceil f\big)_{T-t}\|\\
&\lesssim \|\lceil u, (\cdot)_{T-t}\rceil f_t -\partial_1 u \lceil x_1, (\cdot)_{T}\rceil f+\big(\partial_1 u \lceil x_1, (\cdot)_{t}\rceil f\big)_{T-t} \|
+\|\partial_1 u \lceil x_1, (\cdot)_{T}\rceil f\|+\|uf_{T}\|  \\
&\lesssim [u]_\alpha [f]_\beta (T^{1/3})^{\alpha+\beta}+[u]_\alpha [f]_\beta T^{(\beta+1)/3}+\|u\|[f]_\beta T^{\beta/3}\\ 
&\lesssim ([u]_\alpha+\|u\|) [f]_\beta T^{\beta/3}.
\end{align*}
where we used again $\|\partial_1 u\|\lesssim [u]_\alpha$. As {we assumed $\beta\neq -\frac12, -1$}, by 
Lemma \ref{Lequiv} we have that
$[uf_t-\partial_1 u  \lceil x_1, (\cdot)_{t}\rceil f]_\beta\lesssim ([u]_\alpha+\|u\|) [f]_\beta$.

By Lemma \ref{lem:compactness} in Appendix, for a subsequence $n\to \infty $ with $t=1/2^n$, we have $uf_t-\partial_1 u  \lceil x_1, (\cdot)_{t}\rceil f\rightharpoonup uf$ for some distribution denoted by $uf$ that belongs to $C^\beta$ and 
$[uf]_\beta\lesssim   ([u]_\alpha+\|u\|) [f]_\beta$, so \eqref{estim_norm} holds. Moreover, for every $T>0$ we have $\psi_{T-t}\to \psi_T$ in the sense of Schwartz functions as $t\to 0$ so that $(uf_t)_{T-t}- \big(\partial_1 u  \lceil x_1, (\cdot)_{t}\rceil f\big)_{T-t}\to (uf)_T$ uniformly. By Step 4, passing at the limit $t\to 0$, we conclude \eqref{w25}, i.e.,  $$\lVert \underbrace{(uf)_T-uf_T}_{=\,-\lceil u, (\cdot)_{T}\rceil f} + \partial_1 u \lceil x_1, (\cdot)_{T}\rceil f \rVert \lesssim [u]_\alpha [f]_\beta (T^\frac{1}{3})^{\alpha+\beta}.$$

\nd {\it Step \arabic{L3}.\label{st5}\refstepcounter{L3} We prove (\ref{w23}) for a general (symmetric) Schwartz kernel $\phi$}. 
Indeed, this relies on the following
representation 
of $\phi$ in terms of $\psi_t$ via a family $\{\omega^t\}_{t\in (0,1]}$ of smooth functions
of sufficient decay:
\begin{align}\label{w24}
\phi=\int_0^1\omega^t*\psi_t dt\quad\mbox{and}\quad
\int_{\R^2} (1+d^2(x,0))|\omega^t(x)|\, \, dx\lesssim 1, \quad t\in (0,1],
\end{align}
where $\omega^t$ 
$:=\big(1+(1-t)\AAA\big)\phi$.
The formula in (\ref{w24}) follows via $\phi$ $=\int_0^1\frac{d}{dt}((t-1)\phi_t)dt$ and the definition 
$\phi_t=\phi*\psi_t$ from the characterization of the kernel $\psi_t$ in form of
$\partial_t\psi_t=-\AAA \psi_t$. Since $\phi$ is a Schwartz function, the only
issue with the estimate in (\ref{w24}) is the non-local term $\int(1+x_1^2)||\partial_1|^3\phi|\, \, dx_1$,
which is controlled as in Step \ref{moments} in the proof of Lemma \ref{Lequiv} by the convergent
$(\int(1+x_1^2)^{-1}\, \, dx_1)^\frac{1}{2}$ times $(\int(1+x_1^6)(|\partial_1|^3\phi)^2\, \, dx_1)^\frac{1}{2}$.
By Plancherel, the latter can be rewritten as $\sum_{l=0,3}(\int|\partial_{k_1}^l |k_1|^3\phi|^2dk_1)^\frac{1}{2}$,
which itself is dominated by $\sum_{l=0,3}\sum_{n=0,\cdots,l}(\int|k_1^{3-n}\partial_{k_1}^n\phi|^2dk_1)^\frac{1}{2}$.
Since $\phi$ is Schwartz, the integral over $x_2$ of this expression is bounded. Moreover, using the scaling \eqref{f37} (in particular, $(\psi_t)_{\ell}=\psi_{t\ell^3}$), the formula in (\ref{w24}) yields 
$$\phi_\ell=\int_0^1\omega^{t}_\ell*\psi_{t\ell^3} dt, \quad \textrm{ with } \quad
\omega^t_\ell(x_1,x_2)=\ell^{-\frac{5}{2}}\omega^t(\ell^{-1}x_1,\ell^{-\frac{3}{2}}x_2),$$
where the subscript $\ell$ in $\omega^t_\ell$ denotes the rescaling like for $\phi_\ell$.
\footnote{ \label{ft} The representation  (\ref{w24}) gives a new proof of point ii) in Lemma \ref{Lequiv} by passing from the estimate \eqref{eq25} on $\|f_T\|$ to the desired estimate \eqref{supl_numa} on $\|f_\ell\|$. Indeed, 
we convert convolution with $\phi_\ell$ into convolution with $\psi_t$ and deduce for $\ell\leq 1$:
\begin{align*}
\|f_\ell\|&=\|f*\int_0^1\omega^{t}_\ell*\psi_{t\ell^3} dt\|\leq \int_0^1  \|\omega^{t}_\ell*f_{t\ell^3}\| dt \leq \int_0^1 \|f_{t\ell^3}\| \|\omega^{t}_\ell\|_{L^1}\, dt\lesssim [f]_\beta \int_0^1 (t^{\frac13}\ell)^\beta \, dt\lesssim  [f]_\beta \ell^\beta,
\end{align*}
as $\beta>-3$. }

Coming back to the proof of (\ref{w23}), as we will use several mollifiers, in order to avoid confusion with $\lceil u, (\cdot)_{\ell \, (\textrm{or } t)}\rceil$, we introduce the following notation for the commutator-convolution:
$$\lceil u, \phi*\rceil f :=u \phi*f-\phi*(uf).$$
Then one checks (by simple algebra) the following:
\be
\label{opera}
\lceil u, \om*\psi*\rceil f=\om*\lceil u, \psi*\rceil f+\lceil u, \om*\rceil (\psi*f).
\ee

Combined with \eqref{w24},
this allows us to convert convolution with $\phi_\ell$ into convolution with $\psi_t$: 
\begin{align}
\nonumber
\lceil u, \phi_\ell*\rceil f&=u f_\ell-(uf)_\ell=\int_0^1 u\omega_\ell^t*f_{t\ell^3}-\omega_\ell^t*(uf)_{t\ell^3}\, dt\nonumber\\
\label{eq50}
&=\int_0^1 \lceil u, \omega_\ell^t*\psi_{t\ell^3}*\rceil f\, dt\stackrel{\eqref{opera}}{=}
\int_0^1\Big(\omega_\ell^t*\lceil u, \psi_{t\ell^3}*\rceil f+\lceil u,\omega_\ell^t*\rceil f_{t\ell^3}\Big)dt.
\end{align}
$\bullet$ If $\alpha\in (0,1]$, 
 by the same arguments, we estimate
\begin{align*}
\|\lceil u, \phi_\ell*\rceil f\|&
\stackrel{\eqref{eq50}}{\lesssim}\int_0^1\big(\|\lceil u, \psi_{t\ell^3}*\rceil f\|+[u]_\alpha\ell^\alpha\|f_{t\ell^3}\|\big)dt.
\end{align*}
We now appeal to (\ref{w25}) and Lemma \ref{Lequiv}  to obtain
\begin{align*}
\|\lceil u, \phi_\ell*\rceil f\|=\|u f_\ell-(uf)_\ell\|&\lesssim[u]_\alpha[f]_\beta
\int_0^1\big((t^\frac{1}{3}\ell)^{\alpha+\beta}+\ell^\alpha (t^\frac{1}{3}\ell)^\beta\big)dt, \quad \forall \ell\in (0,1].
\end{align*}
Because in particular $\beta>-3$, this implies (\ref{w23}). It remains to prove that $uf_\ell \rightharpoonup uf$ in 
${\cal D}'$ where $uf$ is the distribution defined by \eqref{estim_norm}, which in particular shows the uniqueness of the limit $uf$ independently of the symmetric mask $\phi$. Indeed, for every smooth periodic test function $\zeta$, we have that
\begin{align*}
\int_{[0,1)^2} uf_\ell \zeta\, dx&=\int_{[0,1)^2} \big(uf_\ell-(uf)_\ell\big)  \zeta\, dx+\int_{[0,1)^2}(uf)_\ell \zeta\, dx\\
&\stackrel{\eqref{w23}}{\lesssim} [u]_\alpha [f]_\beta \ell^{\alpha+\beta}\|\zeta\|_{L^1} + \langle uf, \zeta*\phi_\ell \rangle_{{\cal D}', {\cal D}}
\longrightarrow \langle uf, \zeta \rangle_{{\cal D}', {\cal D}} \quad \textrm{ as } \ell \to 0,
\end{align*}
where we used that ${\phi}$ is symmetric.

\medskip

$\bullet$ If $\alpha\in (1, 3/2)$, starting from \eqref{eq50}, we have by \eqref{w24}:
\begin{align*}
\lceil u, \phi_\ell*\rceil f-\partial_1 u \lceil x_1, \phi_\ell*\rceil f &=\int_0^1\Big(\omega_\ell^t*\lceil u, \psi_{t\ell^3}*\rceil f+\lceil u,\omega_\ell^t*\rceil f_{t\ell^3}-
\partial_1 u \lceil x_1, \omega_\ell^t* \psi_{t\ell^3}*\rceil f\Big)dt\\
&\stackrel{\eqref{opera}}{=}\int_0^1\Bigg(\omega_\ell^t*\big(\lceil u, \psi_{t\ell^3}*\rceil f-\partial_1 u \lceil x_1, \psi_{t\ell^3}*\rceil f\big)\\
&+\big(\lceil u,\omega_\ell^t*\rceil f_{t\ell^3}-\partial_1 u \lceil x_1, \omega_\ell^t*\rceil f_{t\ell^3}\big)
-\lceil\partial_1 u, \omega_\ell^t*\rceil \lceil x_1, \psi_{t\ell^3}*\rceil f\Bigg)dt.
\end{align*}
Now, by the same arguments using \eqref{ap11} and \eqref{claimul}, we estimate
\begin{align*}
&\|\big(\lceil u, \phi_\ell*\rceil -\partial_1 u \lceil x_1, \phi_\ell*\rceil \big) f\|\\
&\lesssim\int_0^1\big(\|(\lceil u, \psi_{t\ell^3}*\rceil -\partial_1 u \lceil x_1, \psi_{t\ell^3}*\rceil)f\|+
[u]_\alpha\ell^\alpha\|f_{t\ell^3}\|+[u]_\alpha\ell^{\alpha-1}\|\lceil x_1, \psi_{t\ell^3}*\rceil f\|\big)dt.
\end{align*}
We now appeal to (\ref{w25}), \eqref{eq27} and Lemma \ref{Lequiv} to obtain
\begin{align*}
\|\big(\lceil u, \phi_\ell*\rceil -\partial_1 u \lceil x_1, \phi_\ell*\rceil\big) f\|&\lesssim[u]_\alpha[f]_\beta
\int_0^1\big((t^\frac{1}{3}\ell)^{\alpha+\beta}+\ell^\alpha (t^\frac{1}{3}\ell)^\beta+\ell^{\alpha-1} (t^\frac{1}{3}\ell)^{\beta+1}\big)dt\\
&\lesssim [u]_\alpha[f]_\beta \ell^{\alpha+\beta},
\end{align*}
that is \eqref{w23}. It follows that $u f_\ell-\partial_1 u \lceil x_1, \phi_\ell*\rceil f\rightharpoonup uf$ distributionally as in the case $\alpha\in (0,1]$.
\end{proof}
%\bigskip 

%%%%%%%%%%%%%%%%%%%%%%%%%%%%%
%%%%%%%%%%%%%%%%%%%%%%%%%%%%%%
%%%%  PROOF OF REGULARITY HILBERT transform
%%%%%%%%%%%%%%%%%%%%%%

\subsection{Regularity of the Hilbert transform. Proof of Lemma \ref{L4}.}

Since the Hilbert transform $R$ acts only on the $x_1$-variable, it does not map $C^{\alpha}$ into $C^\alpha$ on the two-dimensional torus $[0,1)^2$, but in a slightly
larger H\"older space (corresponding to a smaller exponent $\alpha-\eps$ for any $\eps>0$). This is the result in Lemma \ref{L4}.

\begin{proof}[Proof of  Lemma \ref{L4}]
For the reader's convenience, we start by proving the boundedness of the Hilbert transform $R$ over the single-variable H\"older space $C^\alpha_{x_1}$. Even if the result is standard, we want to highlight that the method based on the ``heat kernel'' $\psi_T$ in proving Lemma \ref{L2} can be adapted here by using a different ``heat kernel''. 
More precisely, we introduce 
$G_T(x_1)=\frac{1}{T} G(\frac{x_1}{T})$ where the semigroup kernel $G$ is given in Fourier space: $$G(k_1)=e^{-|k_1|} \, \textrm{ for } \, k_1\in \R,$$ 
so that $G_T$ is the ``heat kernel'' of the semigroup generated by $|\partial_1|$. As for the semigroup $\psi_T$ in \eqref{eq14bis}, 
while $G(x_1)$ is a smooth (since $G(k_1)$ decays exponentially), its decay is moderate (since $G(k_1)$ is only Lipschitz). 
More precisely, we claim for $\alpha\in (0, \frac32)$: 
\be
\label{eq:estG}
\int_\R |G_T|  \, dx_1\lesssim 1, \, \int_\R |\partial_1 G_T| \, dx_1
\lesssim T^{-1}, \,
\int_\R |x_1|^\alpha\, |\partial_1 G_T|  \,  dx_1\lesssim T^{\alpha-1}, \,  \int_\R \big||\partial_1| G_T\big|\, \, dx_1\lesssim T^{-1}.\ee 
By scaling, it is enough to consider $T=1$. By Cauchy-Schwarz, the above reduces to 
$$\int_\R (x_1^2+1)G^2  \,  dx_1, \, \int_\R (x_1^4+1) (\partial_1 G)^2 \, dx_1, \int_\R (x_1^2+1)\, (|\partial_1| G)^2   \, dx_1\lesssim 1,  $$
which by Plancherel is equivalent to
$$\int_\R (\partial_{k_1}G)^2+G^2  \, \, dk_1, \, \int_\R \big(\partial_{k_1}^2(k_1G)\big)^2+(k_1 G)^2 \, dk_1, \int_\R 
\big(\partial_{k_1}(|k_1|G)\big)^2+(k_1 G)^2   \, dk_1\lesssim 1,  $$
which holds since $k_1\mapsto G(k_1), |k_1|G(k_1), k_1 G(k_1), \partial_{k_1} (k_1 G(k_1)) $ are Lipschitz and the boundedness of 
the above quantities follows in combination with the exponential decay of $G$.

\nd {\it Step 1. If $\alpha\in (0, \frac32)\setminus \{1\}$, then the Hilbert transform on the one-dimensional torus $[0,1)$ satisfies
\be
\label{rectang}
[Ru]_\alpha\lesssim [u]_\alpha
\ee
for any periodic function $u=u(x_1)$ of vanishing average.}

$\bullet$ If $\alpha\in (0,1)$, let $u=u(x_1)\in C_{x_1}^\alpha$ with $\int_0^1 u\, \, dx_1=0$ and $f:=Ru$. We want to prove that $f\in C_{x_1}^\alpha$ with 
$[f]_{\alpha}\lesssim [u]_{\alpha}$. We will follow the main lines in Step \ref{positive} of the proof of Lemma \ref{Lequiv}. More precisely, let $u_T=G_T*u$ and $f_T=G_T*f $. \footnote{Do not confound with the notation \eqref{f24} that uses the semigroup $\psi_T$ defined in \eqref{f25}. We use the semigroup $G_T$ only at this step.} Note that
\be
\label{eq:loi_grupG}
\partial_T f_T=-|\partial_1| f_T=-\partial_1 u_T.\ee
As $\int_\R \partial_1 G_T\, \, dx_1=0$, we have
\begin{align*}
&\partial_1 u_T(x_1)=\int_{\R} \partial_1 G_T(y_1) (u(x_1-y_1)-u(x_1))\, dy_1\\ 
 \Rightarrow\, \quad &
\|\partial_1 u_T\|\leq [u]_\alpha \int_\R  |y_1|^\alpha |\partial_1 G_T|  \, \, dy_1
\stackrel{\eqref{eq:estG}}{\lesssim} T^{\alpha-1}[u]_\alpha
\end{align*}
as well as $\ds \|\partial^2_1 u_T\|=\| \partial_1u_{T/2}*\partial_1 G_{T/2}\|\leq \|\partial_1 u_{T/2}\| \int_{\R} |\partial_1 G_{T/2}| \, dx_1\stackrel{\eqref{eq:estG}}{\lesssim} 
T^{\alpha-2}[u]_\alpha$. 
Since $\alpha>0$, it follows for every $0<\tau<T$:
\be
\label{ab1}
\|f_T-f_\tau\|= \|\int_\tau^T \partial_s f_s\, ds\|\stackrel{\eqref{eq:loi_grupG}}{\leq} \int_\tau^T \|\partial_1 u_s\|\, ds\lesssim T^\alpha [u]_\alpha
\ee
which in particular proves by passing to the limit $\tau<T\to 0$ that $f$ is a continuous function. Moreover, by \eqref{eq:estG},
$\ds \|\partial_1 f_T\|=\|u* |\partial_1| G_T\|\leq \| u \|  \int_{\R}\big| |\partial_1| G_T\big|\, dx_1 \to 0$ as $T\to \infty$, so that thanks to $\alpha<1$
\be
\label{ab2}
\|\partial_1 f_T\| = \|\int_T^\infty \frac d{ds} (\partial_1 f)_s\, ds\|\stackrel{\eqref{eq:loi_grupG}}{\leq} \int_T^\infty \|\partial^2_1 u_s\|\, ds\lesssim 
\frac1{T^{1-\alpha}}[u]_\alpha.
\ee

We now show \eqref{rectang} by the following argument: for $x_1, x_1'\in [0,1)$, we write
\[  f(x_1')- f(x_1) = ( f- f_T)(x'_1)+( f_T- f)(x_1) + \int_0^1 (x_1'-x_1) \partial_1  f_T(sx_1'+(1-s)x_1) \, ds. \]
Hence $ \lvert  f(x'_1)- f(x_1) \rvert \leq 2\lVert  f- f_T \rVert + \lVert \partial_1  f_T \rVert \lvert x_1'-x_1 \rvert 
\stackrel{\eqref{ab1}, \eqref{ab2}}{\lesssim}  ( T^\alpha + T^{\alpha-1} |x_1-x_1'|) [u]_\alpha$. Choosing $T$ such that $T=|x_1-x_1'|$ yields 
$[ f]_\alpha \lesssim [u]_\alpha$. 

$\bullet$ If $\alpha \in (1, \frac 32)$, let $u\in C^{\alpha}$ with $\int_0^1 u\, \, dx_1=0$ and $f=Ru$. Then by the above argument, $\partial_1 f=R(\partial_1 u)\in C^{\alpha-1}$ and $[\partial_1 f]_{\alpha-1}\lesssim [\partial_1 u]_{\alpha-1}\lesssim [u]_\alpha$ (by Lemma \ref{lem:linftyhoelder}). Therefore,
$|f(x_1')-f(x_1)-\partial_1 f(x_1)(x'_1-x_1)|\leq |x'_1-x_1| \int_0^1 |\partial_1 f(x_1+s(x_1'-x_1))-\partial_1 f(x_1)|\, ds\lesssim [\partial_1 f]_{\alpha-1} |x_1'-x_1|^{\alpha}$,
and we conclude that $[f]_\alpha\lesssim [u]_\alpha$.

\medskip

\nd {\it Step 2. For $0<\beta<\alpha <\frac32$, the Hilbert transform $R$ satisfies 
$[Ru]_\beta\lesssim [u]_\alpha$ for all periodic functions $u$ of vanishing average in $x_1$, a space we denote by $\FF$ for abbreviation. }

$\bullet$ If $\alpha\in (0,1]$, let $u\in C^\alpha\cap \FF$ and set $f:=Ru$. {As $\beta<\alpha$, we may  w.l.o.g. assume that $\alpha<1$ (otherwise, replace $\alpha$ by $\tilde \alpha:=(\alpha+\beta)/2\in (0,1)$ and use that $[u]_\alpha\gtrsim [u]_{\tilde \alpha}$).}

\nd By Step 1 and Lemma \ref{lem:linftyhoelder}, we know that for every $x_2\in [0,1)$, $f(\cdot, x_2)\in C^{\alpha}([0,1))\cap \FF$ with $[f(\cdot, x_2)]_{\beta}\lesssim [f(\cdot, x_2)]_{\alpha}\lesssim [u(\cdot, x_2)]_{\alpha}\lesssim [u]_{\alpha}$ as $\beta\in (0, \alpha)$; moreover, we have thanks to the vanishing average in $x_1$: $\|f(\cdot, x_2)-f(\cdot, y_2)\|\lesssim [f(\cdot, x_2)-f(\cdot, y_2)]_{\alpha-\beta}\lesssim [u(\cdot, x_2)-u(\cdot, y_2)]_{\alpha-\beta}$,
for every $x_2, y_2\in (0,1)$. To conclude, it is enough to bound the latter RHS by $|x_2-y_2|^{2\beta/3}$. Indeed, by Definition \ref{definitionCalpha},
we have for every $x_1, x_2, y_1, y_2\in [0,1)$: \footnote{We use $\min\{a,b\}\leq a^\eps b^{1-\eps}$ for $\eps\in (0,1)$ and $a,b\geq 0$.}
\begin{align}
\nonumber
\bigg|\big(u(x_1, x_2)-u(x_1, y_2)\big)-\big(u(y_1, x_2)-u(y_1, y_2)\big)\bigg|&\leq 
2 [u]_\alpha \min\{|x_1-y_1|^\alpha, |x_2-y_2|^{2\alpha/3}\}\\
\label{ad34}
&\leq 2 [u]_\alpha |x_1-y_1|^{\alpha-\beta} |x_2-y_2|^{2\beta/3}
\end{align}
yielding $[u(\cdot, x_2)-u(\cdot, y_2)]_{C^{\alpha-\beta}(I)}\lesssim [u]_\alpha |x_2-y_2|^{2\beta/3}$; thus, $[f]_\beta\lesssim [u]_\alpha$.

$\bullet$ If $\alpha\in (1,\frac 3 2)$, let $u\in C^\alpha\cap \FF$ and $f:=Ru$. 
By Step 1, we know $[f(\cdot, x_2)]_{\beta}\lesssim [u(\cdot, x_2)]_{\alpha}\lesssim [u]_\alpha$ for every $x_2\in [0,1)$. Since the function $u(\cdot, x_2)-u(\cdot, y_2)$ is Lipschitz (by Lemma \ref{lem:linftyhoelder}), using the same argument as in \eqref{ad34} and Step~1, we have for every  $x_1, x_2, y_2\in (0,1)$: $|f(x_1, x_2)-f(x_1, y_2)|\lesssim [f(\cdot, x_2)-f(\cdot, y_2)]_{(\alpha-\beta)/\alpha}\lesssim [u(\cdot, x_2)-u(\cdot, y_2)]_{(\alpha-\beta)/\alpha} \lesssim  [u]_{\alpha} |x_2-y_2|^{2\beta/3}$ because $\min\{|x_1-y_1|, |x_2-y_2|^{2\alpha/3}\}\leq |x_1-y_1|^{(\alpha-\beta)/\alpha} |x_2-y_2|^{2\beta/3}$. The conclusion is now straightforward.

\end{proof}

%%%%%%%%%%%%%%%%%%%%%%%%%%%%%%%%%
%%%%%%%%%%%%%%%%%%%%%%%%%%%%%%%%%
%%%%%%%%%%%%%%%%%%%%%%%%%%%%%
%%%%%%%%%%%%%%%%%%%%%%%%%%%%%%
%%%%  PROOF OF MAIN THEOREMS
%%%%%%%%%%%%%%%%%%%%%%

\section{Proof of the main results: Theorems \ref{T1} and \ref{T2}}
\label{sec:main}

The twin Theorems \ref{T1} and \ref{T2} are an immediate consequence of the following purely deterministic
result, which relies on a fixed point argument based on the Schauder theory of
Lemma~\ref{L2} and the regular product result of Lemma~\ref{L3}. 

\medskip

\begin{prop}\label{Pfix}
For given $0<\epsilon<\frac{1}{8}$, there exists a (possibly large) constant $C>0$ with the following property:
Suppose we are given a function $v$ {of vanishing average} and a distribution $F$, both periodic, and small in the sense of
\begin{align}\label{fi01}
[F]_{-\frac{3}{4}-\epsilon}+[v]_{\frac{3}{4}-\epsilon}\le\frac{1}{C}.
\end{align}
Then there exists a unique periodic function $w$ of vanishing average in $x_1$ with
\begin{align*}
[w]_{\frac{5}{4}-2\epsilon}\le\frac{1}{C} 
\end{align*}
and that satisfies (in a distributional sense)
\begin{align*}
\lefteqn{(-\partial_1^2-|\partial_1|^{-1}\partial_2^2)w}\nonumber\\
&+P\Big(F+v\partial_2Rw+w\partial_2Rv+w\partial_2Rw+\partial_2\frac{1}{2}R(w+v)^2-(w+v) \partial_1\frac{1}{2}R(w+v)^2\Big)=0.
\end{align*}
Moreover, we have the a priori estimate
\begin{align}\label{fi16}
[w]_{\frac{5}{4}-2\epsilon}\lesssim [F]_{-\frac{3}{4}-\epsilon}+[v]_{\frac{3}{4}-\epsilon}^2.
\end{align}
Finally, if $\tilde w$ denotes the solution for another data pair $(\tilde v,\tilde F)$, then we have
\begin{align}\label{fi02}
[w-\tilde w]_{\frac{5}{4}-2\epsilon}
\lesssim [F-\tilde F]_{-\frac{3}{4}-\epsilon}+[v-\tilde v]_{\frac{3}{4}-\epsilon}.
\end{align}
\end{prop}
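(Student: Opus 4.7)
The plan is to solve the equation by a contraction mapping argument in the closed ball $B_\delta := \{w \text{ periodic with } \int_0^1 w \, dx_1 = 0,\; [w]_{\frac{5}{4}-2\epsilon} \le \delta\}$, for an appropriately chosen $\delta = 1/C$. Define the map
\[
\Phi(w) := -\mathcal{L}^{-1} P\Big[F + v\partial_2 Rw + w\partial_2 Rv + w\partial_2 Rw + \tfrac{1}{2}\partial_2 R(w+v)^2 - \tfrac{1}{2}(w+v)\partial_1 R(w+v)^2\Big],
\]
where $\mathcal{L}^{-1}$ is well-defined on distributions whose $x_1$-average vanishes (as ensured by $P$), and satisfies the Schauder estimate $[\mathcal{L}^{-1}g]_{\frac{5}{4}-2\epsilon} \lesssim [g]_{-\frac{3}{4}-2\epsilon}$ by Lemma \ref{L2} applied at $\alpha = \frac{5}{4}-2\epsilon \in (\tfrac{1}{2},\tfrac{3}{2}) \setminus \{1\}$.

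The core step is to estimate each term of the RHS in $C^{-\frac{3}{4}-2\epsilon}$ by combining Lemma \ref{L4} (which passes $R$ through at the cost of $\epsilon$-regularity), the trivial loss of $1$ and $\tfrac{3}{2}$ from $\partial_1$ and $\partial_2$, and Lemma \ref{L3} on products. For instance, $Rw \in C^{\frac{5}{4}-3\epsilon}$ so $\partial_2 Rw \in C^{-\frac{1}{4}-3\epsilon}$, and since $(\frac{3}{4}-\epsilon) + (-\frac{1}{4}-3\epsilon) = \tfrac{1}{2}-4\epsilon > 0$ for $\epsilon < \tfrac{1}{8}$ (with $\beta \notin \{-1,-\tfrac{1}{2}\}$), Lemma \ref{L3} gives $[v\partial_2 Rw]_{-\frac{1}{4}-3\epsilon} \lesssim [v]_{\frac{3}{4}-\epsilon}[w]_{\frac{5}{4}-2\epsilon}$. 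The analogous bookkeeping works for $w\partial_2 Rv$ (landing in $C^{-\frac{3}{4}-2\epsilon}$), $w\partial_2 Rw$, $\partial_2 R(w+v)^2$ (using a standard Leibniz-type estimate $[f^2]_\alpha \lesssim \|f\|\,[f]_\alpha$ in positive-exponent Hölder spaces applied to $w+v \in C^{\frac{3}{4}-\epsilon}$), and $(w+v)\partial_1 R(w+v)^2$. All outputs embed into $C^{-\frac{3}{4}-2\epsilon}$ by the inclusion remark, so Schauder yields, with $X := [w]_{\frac{5}{4}-2\epsilon}$, $Y_v := [v]_{\frac{3}{4}-\epsilon}$, $Y_F := [F]_{-\frac{3}{4}-\epsilon}$, an estimate of the shape
\[
[\Phi(w)]_{\frac{5}{4}-2\epsilon} \le C_0\bigl(Y_F + Y_v^2 + Y_v\,X + X^2 + Y_v^3 + Y_v^2 X + Y_v X^2 + X^3\bigr).
\]

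Under \eqref{fi01} and $X \le 1/C$ with $C$ sufficiently large, the RHS above is bounded by $\delta$, so $\Phi: B_\delta \to B_\delta$. For the contraction property, the difference $\Phi(w_1)-\Phi(w_2)$ is estimated term by term using the same product and Schauder estimates; each contribution is linear in $w_1-w_2$ with a coefficient controlled by $Y_v + X_1 + X_2 \le 3/C$, yielding $[\Phi(w_1)-\Phi(w_2)]_{\frac{5}{4}-2\epsilon} \le \tfrac{1}{2}[w_1-w_2]_{\frac{5}{4}-2\epsilon}$ for $C$ large. Banach's fixed point theorem then delivers existence and uniqueness of $w \in B_\delta$. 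The a priori bound \eqref{fi16} follows by returning to the displayed estimate on $\Phi(w)=w$ and absorbing the $Y_v X$, $X^2$, $X^3$ and higher-order $Y_v$-terms into the LHS (using $Y_v, X \le 1/C$ and $Y_v^3 \le Y_v^2 \cdot 1/C$), leaving $X \lesssim Y_F + Y_v^2$. The stability estimate \eqref{fi02} is obtained by subtracting the equations for $w$ and $\tilde w$, expanding each nonlinear difference (e.g.\ $v\partial_2 Rw - \tilde v \partial_2 R\tilde w = (v-\tilde v)\partial_2 Rw + \tilde v \partial_2 R(w-\tilde w)$, and analogously for the cubic terms), estimating everything in $C^{-\frac{3}{4}-2\epsilon}$ via Lemmas \ref{L3} and \ref{L4}, and absorbing the self-coefficients of $w-\tilde w$ into the LHS thanks to smallness.

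The main obstacle lies in the bookkeeping of Step~2: every product must keep $\alpha+\beta$ strictly positive after the compounded $\epsilon$-losses from Lemmas \ref{L3} and \ref{L4}, while avoiding the forbidden exponents $\beta \in \{-1,-\tfrac{1}{2}\}$ in Lemma \ref{L3} and $\alpha = 1$ in Lemma \ref{L2}; the choice $\epsilon < 1/8$ is precisely what makes the worst pairing $(\alpha,\beta) = (\tfrac{3}{4}-\epsilon,-\tfrac{1}{4}-3\epsilon)$ arising from $v\partial_2 Rw$ still admissible, and explains the gap between the exponents $\frac{5}{4}-2\epsilon$ and $\frac{3}{4}-\epsilon$ in the statement.
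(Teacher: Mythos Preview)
Your proposal is correct and follows essentially the same approach as the paper: a contraction mapping argument for $\Phi$ on a small ball in $C^{\frac{5}{4}-2\epsilon}$, reducing via the Schauder estimate of Lemma~\ref{L2} to multilinear bounds in $C^{-\frac{3}{4}-2\epsilon}$ obtained from Lemmas~\ref{L3} and~\ref{L4}, with the same critical pairings (in particular $\frac{1}{2}-4\epsilon>0$) forcing $\epsilon<\frac{1}{8}$. The only cosmetic difference is that the paper derives the a~priori bound \eqref{fi16} by writing $w=\Phi(F,v,0)+(\Phi(F,v,w)-\Phi(F,v,0))$ and absorbing the second piece via contractivity, whereas you absorb the $Y_vX$ and higher-order terms directly; both arguments are equivalent.
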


We postpone for the moment the proof of Proposition \ref{Pfix}, and we use it in order to prove our main results:

\begin{proof}[Proof of Theorems \ref{T1} and \ref{T2}]
Fix $0<\epsilon<\frac{1}{8}$. From the stochastic Lemmas \ref{L1} and \ref{L5}, and using the Schauder theory of Lemma \ref{L2} 
in order to pass from $\xi$ to $v$, we know that the random variable
\begin{align*}
\sup_{\ell\le 1}\Big(\big([v_\ell]_{\frac{3}{4}-\epsilon}+[F^\ell]^{\frac12}_{-\frac{3}{4}-\epsilon}\big)
+\ell^{-\frac{\epsilon}{2}}\big([v_\ell-v]_{\frac{3}{4}-\epsilon}+[F^\ell-F]^{\frac12}_{-\frac{3}{4}-\epsilon}\big)\Big)
\end{align*}
has bounded moments of all order $p$. Hence there exists a random variable $\sigma_0\ge 0$ that is almost surely positive such that
on the one hand, we have for all $\ell\le 1$
\begin{align}
\sigma_0 [v_\ell]_{\frac{3}{4}-\epsilon}+\sigma_0^2 [F^\ell]_{-\frac{3}{4}-\epsilon}&\le\frac{1}{C},\label{fi15}\\
\sigma_0 [v_\ell-v]_{\frac{3}{4}-\epsilon}+\sigma_0^2[F^\ell-F]_{-\frac{3}{4}-\epsilon}
&\le\ell^{\frac{\epsilon}{2}},\label{fi17}
\end{align}
where $C$ denotes the constant in Proposition \ref{Pfix}, and on the other hand, $\frac{1}{\sigma_0}$ has bounded moments
of all order. 

\medskip

In view of (\ref{fi15}), for fixed $0 \leq \sigma\le\sigma_0$ and $0<\ell\le 1$
we may apply Proposition \ref{Pfix} with $(\sigma^2 F^\ell,\sigma v_\ell)$ playing the role of $(F,v)$. 
It yields a unique periodic function $w^\ell$ of vanishing average in $x_1$
with $[w^\ell]_{\frac{5}{4}-2\epsilon}\le\frac{1}{C}$ and such that
\begin{align*}
\lefteqn{(-\partial_1^2-|\partial_1|^{-1}\partial_2^2)w^\ell}\nonumber\\
&+P\Big(\sigma^2F^\ell+\sigma v_\ell\partial_2Rw^\ell+\sigma w^\ell\partial_2Rv_\ell+w^\ell\partial_2Rw^\ell\nonumber\\
&+\partial_2\frac{1}{2}R(w^\ell+\sigma v_\ell)^2-(w^\ell+\sigma v_\ell) \partial_1\frac{1}{2}R(w^\ell+\sigma v_\ell)^2\Big)=0.
\end{align*}
Since the convolution parameter $\ell>0$ is present, 
we have by definition $F^\ell=v_\ell\partial_2Rv_\ell$ so that in terms of $u^\ell:=\sigma v_\ell+w^\ell$
and by definition of $v$ through $(-\partial_1^2-|\partial_1|^{-1}\partial_2^2)v=P\xi$, the above equation turns
into the desired Euler-Lagrange equation
\begin{align*}
\lefteqn{(-\partial_1^2-|\partial_1|^{-1}\partial_2^2)u^\ell}\nonumber\\
&+P\Big(u^\ell\partial_2Ru^\ell+\partial_2\frac{1}{2}R(u^\ell)^2-u^\ell \partial_1\frac{1}{2}R(u^\ell)^2-\sigma\xi_\ell\Big)=0.
\end{align*}
The a priori estimate (\ref{fi16}) on $w^\ell$ turns into the desired $[u^\ell-\sigma v_\ell]_{\frac{5}{4}-2\epsilon}$ 
$\lesssim [\sigma^2F^\ell]_{-\frac{3}{4}-\epsilon}+[\sigma v_\ell]_{\frac{3}{4}-\epsilon}^2$ $\lesssim \left(\frac\sigma{\sigma_0}\right)^2$,
where we used (\ref{fi15}) in the last estimate. 

\medskip

We now turn to the first convergence statement as $\ell\downarrow 0$ in (\ref{fi18}), which assumes the form
$\lim_{\ell\downarrow0}[w^\ell-w]_{\frac{5}{4}-2\epsilon}=0$, where $w$ is the solution provided by Proposition \ref{Pfix}
for $(\sigma^2 F,\sigma v)$ playing the role of $(F,v)$ there. In particular by \eqref{fi16} and \eqref{fi15} for $\ell=0$, we obtain \eqref{w09}.
It follows from the convergence (\ref{fi17})
of the data, which we need for $\sigma\le\sigma_0$ but only in the qualitative form of
\begin{align*}
\lim_{\ell\downarrow0}\big([\sigma v_\ell-\sigma v]_{\frac{3}{4}-\epsilon}+[\sigma^2 F^\ell-\sigma^2 F]_{-\frac{3}{4}-\epsilon}\big)=0,
\end{align*}
and the continuity property (\ref{fi02}) of the fixed point $w^\ell$ in the data $(\sigma^2F^\ell,\sigma v_\ell)$.
The second convergence statement in (\ref{fi18}) is contained in (\ref{fi17}) via Lemma \ref{L2}.
\end{proof}

\bigskip
%%%%%%%%%%%%%%%%%%%%%%%%%%%%%%%%%%%%%%%%%%%%%%%%%%%%%%%%%%%%%%%%%%%%%%%%%%%%%%%%%%%%%%%%%%%%%%%%%%%%%%%%%%%%%%%%%%%%%%%%%%%%5

\begin{proof}[Proof of Proposition \ref{Pfix}]

We will apply Banach's contraction mapping theorem on the application
\begin{align*}
(F,v,w)\mapsto \Phi(F,v,w):=&-{(-\partial_1^2-|\partial_1|^{-1}\partial_2^2)}^{-1}{P}\Big(F+v\partial_2Rw+w\partial_2Rv+w\partial_2Rw\nonumber\\
&\quad \quad \quad \quad +\partial_2\frac{1}{2}R(w+v)^2-(w+v) \partial_1\frac{1}{2}R(w+v)^2)\Big);
\end{align*}
we are interested in its fixed points $w=\Phi(F,v,w)$.
We make the standing assumptions
\begin{align}\label{fi06}
[F]_{-\frac{3}{4}-\epsilon},[v]_{\frac{3}{4}-\epsilon},[w]_{\frac{5}{4}-2\epsilon}\le 1
\end{align}
with $v$ and $w$ of vanishing average.
Existence and uniqueness under the smallness condition (\ref{fi01}) will follow from 
the following boundedness and Lipschitz continuity of $\Phi$ in the $w$-variable
\begin{align}
[\Phi(F,v,w)]_{\frac{5}{4}-2\epsilon}
&\lesssim [F]_{-\frac{3}{4}-\epsilon}
+([v]_{\frac{3}{4}-\epsilon}+[w]_{\frac{5}{4}-2\epsilon})^2,\label{fi05}\\
[\Phi(F,v,w)-\Phi(F,v,w')]_{\frac{5}{4}-2\epsilon}
&\lesssim ([v]_{\frac{3}{4}-\epsilon}+[w]_{\frac{5}{4}-2\epsilon}+{[w']_{\frac{5}{4}-2\epsilon}})[w-w']_{\frac{5}{4}-2\epsilon}.\label{fi03}
\end{align}
Indeed, the first property ensures that under the smallness condition (\ref{fi01}) on $(F,v)$,
$w\mapsto \Phi(F,v,w)$ is a self-map on a sufficiently small ball w.\ r.\ t.\ $[w]_{\frac{5}{4}-2\epsilon}$.
The second property ensures that on such a sufficiently small ball, and perhaps strengthening
the smallness condition on $v$, this map $w\mapsto \Phi(F,v,w)$ is a contraction 
w.\ r.\ t.\ $[w]_{\frac{5}{4}-2\epsilon}$. Since the space of all periodic $w$'s with vanishing
average in $x_1$ is complete when endowed with $[w]_{\frac{5}{4}-2\epsilon}$, we are done.
For the a priori estimate (\ref{fi16}) on the fixed point $w$ 
we write $w=\Phi(F,v,0)$ $+(\Phi(F,v,w)-\Phi(F,v,0))$ and note that by (\ref{fi05}) the first
RHS term is estimated by the desired $[F]_{-\frac{3}{4}-\epsilon}+[v]_{\frac{3}{4}-\epsilon}^2$,
whereas the second RHS term can be absorbed into the l.\ h.\ s.\ by contractivity.
For the continuity (\ref{fi02}) of the fixed point $w=w(F,v)$ in $(F,v)$, it is sufficient to establish the
following Lipschitz continuity of $\Phi$ in the $(F,v)$-variables
\begin{align}\label{fi04}
[\Phi(F,v,w)-\Phi(F',v',w)]_{\frac{5}{4}-2\epsilon}
\lesssim [F-F']_{-\frac{3}{4}-\epsilon}+[v-v']_{\frac{3}{4}-\epsilon}.
\end{align}
Indeed, (\ref{fi02}) follows from writing $w-w'$ $=\Phi(F,v,w)-\Phi(F',v',w')$ $=\Phi(F,v,w)-\Phi(F',v',w)$
$+\Phi(F',v',w)-\Phi(F',v',w')$ and applying (\ref{fi04}) on the first RHS term and appealing to
contractivity to absorb the second RHS term.

\medskip

We now turn to the proof (\ref{fi05}), (\ref{fi03}), and (\ref{fi04}) --- always under the assumption 
(\ref{fi06}). By the Schauder theory of Lemma \ref{L2}, it is sufficient to consider
\begin{align}\label{fi08}
\Psi(v,w):=&v\partial_2Rw+w\partial_2Rv+w\partial_2Rw\nonumber\\
&\quad +\partial_2\frac{1}{2}R(w+v)^2-(w+v) \partial_1\frac{1}{2}R(w+v)^2
\end{align}
so that $\Phi(F,v,w)={\cal L}^{-1}PF+{\cal L}^{-1}P\Psi(v,w)$ and to establish
\begin{align*}
[\Psi(v,w)]_{-\frac{3}{4}-2\epsilon}
&\lesssim
([v]_{\frac{3}{4}-\epsilon}+[w]_{\frac{5}{4}-2\epsilon})^2,\\
[\Psi(v,w)-\Psi(v,w')]_{-\frac{3}{4}-2\epsilon}
&\lesssim ([v]_{\frac{3}{4}-\epsilon}+[w]_{\frac{5}{4}-2\epsilon}+{[w']_{\frac{5}{4}-2\epsilon}})[w-w']_{\frac{5}{4}-2\epsilon},\\
[\Psi(v,w)-\Psi(v',w)]_{-\frac{3}{4}-2\epsilon}
&\lesssim[v-v']_{\frac{3}{4}-\epsilon}.
\end{align*}
It is convenient to separate $\Psi$ into a quadratic and a cubic part
so that it is sufficient (also using the ordering of negative exponent
$[\cdot]_{-\frac{3}{4}-2\epsilon}$ $\lesssim [\cdot]_{-\frac{1}{4}-3\epsilon}$ $\lesssim [\cdot]_{-\frac{1}{4}-2\epsilon}$
and positive exponent $[\cdot]_{\frac{3}{4}-2\epsilon}$ $\lesssim[\cdot]_{\frac{3}{4}-\epsilon}$ $\lesssim[\cdot]_{\frac{5}{4}-2\epsilon}$
H\"older norms, see Remark \ref{rem:hoelder_inclu}),
to show the multi-linear estimates for $v_1, v_2, v_3$ of vanishing average:
\begin{align}
[v_1\partial_2Rv_2]_{-\frac{1}{4}-3\epsilon}
&\lesssim [v_1]_{\frac{3}{4}-\epsilon}[v_2]_{\frac{5}{4}-2\epsilon},\label{fi10}\\
[v_1\partial_2Rv_2]_{-\frac{3}{4}-2\epsilon}
&\lesssim [v_1]_{\frac{5}{4}-2\epsilon}[v_2]_{\frac{3}{4}-\epsilon},\label{fi11}\\
[\partial_2R(v_1v_2)]_{-\frac{3}{4}-2\epsilon}
&\lesssim [v_1]_{\frac{3}{4}-\epsilon}[v_2]_{\frac{3}{4}-\epsilon},\label{fi12}\\
[v_1 \partial_1R(v_2v_3)]_{-\frac{1}{4}-2\epsilon}
&\lesssim [v_1]_{\frac{3}{4}-\epsilon}[v_2]_{\frac{3}{4}-\epsilon}[v_3]_{\frac{3}{4}-\epsilon}.\label{fi13}
\end{align}
This is easy for (\ref{fi12}): For periodic functions with vanishing average we have
the algebra property
$[v_1 v_2]_{\frac{3}{4}-\epsilon}$ $\lesssim[v_1]_{\frac{3}{4}-\epsilon}[v_2]_{\frac{3}{4}-\epsilon}$,
cf. Lemma \ref{lem:linftyhoelder} in Appendix;
by Lemma \ref{L4}, we may get rid of the Hilbert transform $R$ at the prize of an $\epsilon$: $[R(v_1v_2)]_{\frac{3}{4}-2\epsilon}$ 
$\lesssim[v_1v_2]_{\frac{3}{4}-\epsilon}$; 
finally, by the definition 
\ref{definitionCbeta} of negative exponent H\"older norms we have $[\partial_2R(v_1v_2)]_{-\frac{3}{4}-2\epsilon}\le[R(v_1v_2)]_{\frac{3}{4}-2\epsilon}$.\\

For (\ref{fi13}) we use the same strategy: we note that by the algebra property $[v_2v_3]_{\frac{3}{4}-\epsilon}$
$\lesssim[v_2]_{\frac{3}{4}-\epsilon}[v_3]_{\frac{3}{4}-\epsilon}$, that by the
boundedness of the Hilbert transform
$[R(v_2v_3)]_{\frac{3}{4}-2\epsilon}$ $\lesssim [v_2v_3]_{\frac{3}{4}-\epsilon}$,
and that by definition of the negative exponent H\"older norms
$[\partial_1R(v_2v_3)]_{-\frac{1}{4}-2\epsilon}$ $\lesssim [R(v_2v_3)]_{\frac{3}{4}-2\epsilon}$.
As a new element, we need to appeal to Lemma \ref{L3} to obtain
$[v_1\partial_1R(v_2v_3)]_{-\frac{1}{4}-2\epsilon}$ $\lesssim[v_1]_{\frac{3}{4}-\epsilon}[\partial_1R(v_2v_3)]_{-\frac{1}{4}-2\epsilon}$
(recall that $v_1$ is of vanishing average),
which requires
$(\frac{3}{4}-2\epsilon)+(-\frac{1}{4}-2\epsilon)$ $=\frac{1}{2}-4\epsilon>0$.\\
Estimates (\ref{fi10}) and (\ref{fi11}) use the same ingredients
\begin{align*}
[v_1\partial_2Rv_2]_{-\frac{1}{4}-3\epsilon}&\lesssim
[v_1]_{\frac{3}{4}-\epsilon}[\partial_2Rv_2]_{-\frac{1}{4}-3\epsilon}\quad\mbox{for}\;(\frac{3}{4}-\epsilon)+(-\frac{1}{4}-3\epsilon)>0\\
&\lesssim[v_1]_{\frac{3}{4}-\epsilon}[Rv_2]_{\frac{5}{4}-3\epsilon}
\lesssim[v_1]_{\frac{3}{4}-\epsilon}[v_2]_{\frac{5}{4}-2\epsilon},\\
[v_1\partial_2Rv_2]_{-\frac{3}{4}-2\epsilon}&\lesssim
[v_1]_{\frac{5}{4}-2\epsilon}[\partial_2Rv_2]_{-\frac{3}{4}-2\epsilon}\quad\mbox{for}\;(\frac{5}{4}-2\epsilon)+(-\frac{3}{4}-2\epsilon)>0\\
&\lesssim[v_1]_{\frac{5}{4}-2\epsilon}[Rv_2]_{\frac{3}{4}-2\epsilon}
\lesssim[v_1]_{\frac{5}{4}-\epsilon}[v_2]_{\frac{3}{4}-\epsilon}.
\end{align*}

\end{proof}

%%%%%%%%%%%%%%%%%%%%%%%%%%%%%%%%%%%%%%%%%%
%%%%%%%%%%%%%%%%%%%%%%%%%%%%%%%%%%%%%%%%%%
%%%%%%%%%%%%%%%   STOCHASTIC TERMS   %%%%%%%%%%%%%%%%%%%%%%%%%%%
%%%%%%%%%%%%%%%%%%%%%%%%%%%%%%%%%%%%%%%%%%
%%%%%%%%%%%%%%%%%%%%%%%%%%%%%%%%%%%%%%%%%%

\section{Estimates of the stochastic terms}
\label{sec:sto}

\subsection{Estimate of the white noise. Proof of Lemma \ref{L1}}

\begin{proof}[Proof of Lemma \ref{L1}] To simplify the notation, we will denote $\xi:=P\xi$. We divide the proof in several steps:

\nd {\it Step 1}. In this step, we consider the Fourier coefficients 
$\xi_{\ell}(k):=\int_{[0,1)^2}e^{-ik\cdot x}\xi_{\ell}(x)\, dx$, $k\in(2\pi\mathbb{Z})^2$, of $\xi_\ell=\xi*\phi_\ell$ 
and its logarithmic derivative $\ell\frac{\partial}{\partial\ell}\xi_{\ell}(k)$ in the convolution scale $\ell$.
For $k\not=0$, we claim the following stochastic (second moment) bounds
$$
\langle|\xi_\ell(k)|^2\rangle \lesssim 1, \quad
\langle|\ell\frac{\partial}{\partial\ell}\xi_\ell(k)|^2\rangle\lesssim \min\{1, \ell^2 d^2(k,0)\}, \quad \forall k\in (2\pi \Z)^2.
$$
Recall that $\lesssim$ means that the (generic) constant only depends on $\phi$ in this context.

\medskip

Within our identification $\xi:=P\xi$, $\xi_\ell(k)$ vanishes for $k_1=0$, so that we may restrict to
$k\not=0$. Recall 
$$
{\xi_\ell(k):=\phi(\ell k_1,\ell^\frac{3}{2}k_2)\xi(k),}
$$
and where $\phi(k)$ denotes the Fourier {\it transform} of the Schwartz mask $\phi$ of the
convolution kernel 
$\phi_\ell(x_1,x_2)=\ell^{-\frac{5}{2}}\phi(\frac{x_1}{\ell},\frac{x_2}{\ell^\frac{3}{2}})$, $x\in \R^2$ so that in Fourier space, $\phi_\ell(k)=\phi(\ell k_1, \ell^{3/2} k_2)$, $k\in \R^2$.
We are also interested in the sensitivities $\ell\frac{\partial}{\partial \ell}$ with respect to the
convolution length $\ell$. It is convenient to consider the derivative in this logarithmic
form $\ell\frac{\partial}{\partial \ell}=\frac{\partial}{\partial\ln\ell}$ since it preserves the structure of the convolution:
\begin{equation}
\label{deltaphi}
\ell\frac{\partial}{\partial \ell}[\phi(\ell k_1,\ell^\frac{3}{2} k_2)]
=\ell k_1\frac{\partial\phi}{\partial k_1}(\ell k_1,\ell^\frac{3}{2} k_2)
+\frac{3}{2}\ell^\frac{3}{2} k_2\frac{\partial\phi}{\partial k_2}(\ell k_1,\ell^\frac{3}{2} k_2)=:\delta\phi_\ell(k)
\end{equation}
is the rescaled Fourier transform of another Schwartz function $\del\phi$
given through $\del\phi:$ $=(k_1\frac{\partial}{\partial k_1}+\frac{3}{2}k_2\frac{\partial}{\partial k_2})\phi$,
which in real space assumes the form
$\del\phi=-\partial_1(x_1\phi)-\frac{3}{2}\partial_2(x_2\phi)$ 
$=-(\frac{5}{2}+x_1\partial_1+\frac{3}{2}x_2\partial_2)\phi$. Therefore, this prompts the definition of
$$
\del \xi_\ell(k):=\ell \frac{\partial}{\partial \ell} \xi_\ell(k)=\del\phi(\ell k_1,\ell^\frac{3}{2} k_2)\xi(k).
$$
By the relation between convolution and Fourier series (as explained at the beginning of Section \ref{sec:holder_space}) we have $\xi_\ell(k)=\phi_\ell(k) \xi(k)$ where $\xi(k)$ is the Fourier coefficient of $\xi$. By the characterizing property of white noise we have
$$\langle|\xi(k)|^2\rangle=\int_{[0,1)^2} dx \int_{[0,1)^2} dx' e^{i k\cdot (x'-x)} \langle\xi(x) \bar \xi(x')\rangle=\int_{[0,1)^2} dx=1. $$
Hence, we deduce the desired estimates:%
$$
\langle|\xi_\ell(k)|^2\rangle=|\phi_\ell|^2(k)\lesssim 1 \quad 
\langle|\ell\frac{\partial}{\partial \ell} \xi_\ell(k)|^2\rangle=|\del \phi_\ell|^2(k)\lesssim \min\{1, \ell^2 d^2(k,0)\}, \, k\neq 0,
$$
where we used that $|\delta\phi(k)|\leq \min\{1, d(k,0)\}$ for all $k\in \R^2$ (because by definition, $\del\phi$ vanishes for $k=0$).

\medskip

\nd {\it Step 2}. We claim that for all $\ell\le 1$, $x\in\mathbb{R}^2$, and $T>0$ we have the estimate
\begin{align}
\langle(\xi_{\ell,T}(x))^2\rangle^\frac{1}{2}
&\lesssim(T^\frac{1}{3})^{-\frac{5}{4}},\label{f18old}\\
\langle(\ell\frac{\partial}{\partial \ell} \xi_{\ell,T}(x))^2\rangle^\frac{1}{2}
&\lesssim \min\{(T^\frac{1}{3})^{-\frac{5}{4}},\ell(T^\frac{1}{3})^{-\frac{9}{4}}\} ,\label{f19old}
\end{align} 
where $\xi_{\ell, T}:=\xi*\phi_\ell*\psi_T$ and $\psi_T$ is given by \eqref{eq14bis}.
Indeed, since the distribution $\xi$ and its translation $\xi(\cdot+h)$ by some translation vector $h$ have
the same distribution under $\langle\cdot\rangle$, this shift-invariance carries over to $\xi_\ell$ and 
$\xi_{\ell,T}$. This implies that 
$\langle(\xi_{\ell,T}(x))^2\rangle$ does not depend on $x$. 
Hence for (\ref{f18old}) it is enough to establish the space-integrated version
\begin{align*}
\langle\int_{[0,1)^2}\xi_{\ell,T}^2\, dx\rangle
\lesssim(T^\frac{1}{3})^{-\frac{5}{2}}.
\end{align*}
In conjunction with the periodicity of $\xi_{\ell,T}=\psi_T*\xi_\ell$, this allows us to appeal to Plancherel
and the relation between convolution and Fourier series (see the beginning of Section \ref{sec:holder_space}),
into which we insert the Fourier characterization (\ref{eq14bis}) of $\psi_T$:
$$
\langle \int_{[0,1)^2}\xi_{\ell,T}^2 \, dx \rangle
=\sum_{k\in(2\pi\mathbb{Z})^2} \exp(-2T(|k_1|^3+k_2^2))
\langle|\xi_\ell(k)|^2 \rangle \lesssim \sum_{k\in(2\pi\mathbb{Z})^2, \, k\neq 0} \exp(-T(|k_1|^3+k_2^2))
$$
where we used Step 1 (and that $\xi_\ell(k=0)=\xi(k=0)=0$ thanks to the presence of $P$ in the
definition of $\xi:=P\xi$). It is thus sufficient to show:
\begin{align}\label{f20old}
\sum_{k\in (2\pi \Z)^2\setminus\{0\}}\exp(-T d^3(k,0))\lesssim(T^\frac{1}{3})^{-\frac{5}{2}}, \quad \forall T>0.
\end{align}
To prove this kind of estimate, we will systematically use the following algorithm: by the obvious ``volume scaling''
\begin{align}\label{f16old}
\#\{k\in (2\pi \Z)^2 \, | \, \ell d(k,0)\le 1\}\lesssim \ell^{-\frac{5}{2}}, \quad \ell\leq 1,
\end{align}
we do a decomposition into dyadic annuli (based on the distance $d$), so that (\ref{f16old}) implies
that the above integral may be estimated as their Euclidean counterpart in dimension $\frac{5}{2}$:
\begin{align*}
\sum_{k\in (2\pi \Z)^2\setminus\{0\}}\exp(-T d^3(k,0))&\leq \sum_{n\in \Z} \quad \sum_{T^{\frac13} d(k,0)\in (2^{n-1}, 2^n], \, k\in (2\pi \Z)^2} \exp(-2^{3(n-1)}) \\
& \leq \sum_{n\in \Z}  \exp({-2^{3(n-1)}})\#\{k\not=0\, | \, T^{\frac13} d(k,0)\le 2^{n}\}\\
& \lesssim (T^{-\frac13})^{\frac 52} \sum_{n\in \Z} 2^{\frac{5n}2} \exp({-2^{3(n-1)}})\lesssim (T^{-\frac13})^{\frac 52}.
\end{align*}
Turning to (\ref{f19old}), which differs from (\ref{f18old}) through the presence of $\ell\frac{\partial}{\partial \ell}$, 
we see by analogous arguments that we need to establish (due to Step 1):
$$
\sum_{k\in (2\pi \Z)^2\setminus\{0\}}\exp(-Td^3(k,0))\min\{1,\ell^2 d^2(k,0)\}
\lesssim\min\{(T^\frac{1}{3})^{-\frac{5}{2}},\ell^2(T^\frac{1}{3})^{-\frac{9}{2}}\}, \quad \forall \ell\leq 1, T>0,
$$
which splits into (\ref{f20old}) and 
\begin{align}\label{f21old}
\sum_{k\not=0}\exp(-Td^3(k,0))d^2(k,0)=T^{-\frac23}\sum_{k\not=0}\exp(-Td^3(k,0))\big(T^{\frac13}d(k,0)\big)^2\lesssim(T^\frac{1}{3})^{-\frac{9},{2}}
\end{align}
which follows by the same argument as above.

\medskip

\nd {\it Step 3}.
We claim that for all $\ell\le 1$, $1\leq p<\infty$, and $T>0$ we have the estimate
\begin{align}
\langle\|\xi_{\ell,T}\|^p\rangle^\frac{1}{p}
&\lesssim(T^\frac{1}{3})^{-\frac{5}{4}-\frac{1}{p}\frac{5}{2}},\label{f22old}\\
\langle\|\ell\frac{\partial}{\partial \ell} \xi_{\ell,T}\|^p\rangle^\frac{1}{p}
&\lesssim \min\{(T^\frac{1}{3})^{-\frac{5}{4}-\frac{1}{p}\frac{5}{2}},\ell(T^\frac{1}{3})^{-\frac{9}{4}-\frac{1}{p}\frac{5}{2}}\} .\label{f23old}
\end{align}  
Note that both the left-hand side (LHS) becomes larger (Jensen's) and the RHS smaller (at least for $T\le 1$)
as $p$ increases; however, the constant hidden in $\lesssim$ now depends on $p$ and blows
up as $p\uparrow\infty$.
Estimates (\ref{f22old}) and (\ref{f23old}) follow from Step 2 in two stages. 
Here comes the first, stochastic stage:
The two random variables considered in Step 2, i.e.,
$f \in\{\xi_{\ell,T}(x), \ell\frac{\partial}{\partial \ell} \xi_{\ell,T}(x)\}$, are linear expressions in the Gaussian field $\xi$ and thus centered Gaussian random variables. As such, they
satisfy an inverse Jensen's inequality 
$\langle |f|^p\rangle^\frac{1}{p}\lesssim\langle f^2\rangle^\frac{1}{2}$ for all $1\leq p<\infty$. Hence
we obtain from Step 2, after integration in $x$
$$
\langle\int_{[0,1)^2}|\xi_{\ell,T}|^p\rangle^\frac{1}{p}
\lesssim(T^\frac{1}{3})^{-\frac{5}{4}}, \quad \langle\int_{[0,1)^2}|\ell\frac{\partial}{\partial \ell} \xi_{\ell,T}|^p\rangle^\frac{1}{p}
\lesssim \min\{(T^\frac{1}{3})^{-\frac{5}{4}}, \ell (T^\frac{1}{3})^{-\frac{9}{4}}\}.
$$
We now turn to the second, deterministic stage. It follows from the fact that for all the fields
$f$ $\in\{\xi_\ell,$ $\ell\frac{\partial}{\partial \ell} \xi_\ell\}$ we have
by the semi-group property (\ref{f24}) in form of $f_T=\psi_\frac{T}{2}*f_\frac{T}{2}$ so that for $p\in (1, \infty)$:
\begin{align*}
\|f_T\|&\le
\Big(\int_{\R^2}|\psi_\frac{T}{2}|^\frac{p}{p-1}\Big)^\frac{p-1}{p}
\Big(\int_{[0,1)^2}|f_\frac{T}{2}|^p                              \Big)^\frac{1}{p}\nonumber\\
\stackrel{(\ref{f25})}{=}&\big((T^\frac{1}{3})^\frac{5}{2}\big)^{-\frac{1}{p}}
\Big(\int_{\R^2}|\psi|^\frac{p}{p-1}\Big)^\frac{p-1}{p}
\Big(\int_{[0,1)^2}|f_\frac{T}{2}|^p\Big)^\frac{1}{p},
\end{align*}
where the ``volume'' factor $(T^\frac{1}{3})^\frac{5}{2}$ arises from the rescaling of $\, dx$.
The combination of $\psi\in L^1$ (see (\ref{e03})) with $\|\psi\|\lesssim \int |\psi(k)|\, dk<\infty$ yields 
$(\int_{\R^2}|\psi|^\frac{p}{p-1})^\frac{p-1}{p}$ $\lesssim 1$, so that the above turns into
the ``inverse estimate''
\begin{align}\label{f35old}
\|f_T\|\lesssim(T^\frac{1}{3})^{-\frac52\frac{1}{p}}\Big(\int_{[0,1)^2}|f_\frac{T}{2}|^p\Big)^\frac{1}{p}.
\end{align}
This proves \eqref{f22old}-\eqref{f23old} for $p>1$. For $p=1$, the conclusion follows by Jensen's inequality.

\medskip

\nd {\it Step 4}. We claim that for all $\eps\in (0,1)$, $1\leq  p<\infty$ and $\ell_0\le 1$ we have for every $T\leq 1$:
\begin{align}
\langle\sup_{\ell\le 1}\|\xi_{\ell,T}\|^p\rangle^\frac{1}{p}
&\lesssim  (T^\frac{1}{3})^{-\frac{5}{4}-2\eps},
\label{f32old}\\
\langle\sup_{\ell,\ell'\le \ell_0}\|\xi_{\ell,T}-\xi_{\ell',T}\|^p\rangle^\frac{1}{p}
&\lesssim\ell_0^\eps
(T^\frac{1}{3})^{-\frac{5}{4}-2\eps}, \label{f31old}
\end{align}
where now, the constant also depends on $\eps>0$.
Clearly, by the triangle inequality w.r.t. to $\langle\|\cdot\|^p\rangle^{\frac1p}$, (\ref{f31old}) follows from
\begin{align}\label{f33old}
\langle\big(\int_0^{\ell_0}\|\frac{\partial}{\partial \ell} \xi_{\ell,T}\| d\ell\big)^p\rangle^\frac{1}{p}
&\lesssim\ell_0^\eps
(T^\frac{1}{3})^{-\frac{5}{4}-2\eps},
\end{align}
whereas (\ref{f32old}) then is a consequence of this for $\ell_0=1$ and (\ref{f22old}) for $\ell=1$ provided that $p$ is so large 
that $\frac1p \frac52\leq 2\eps$; then, Jensen's inequality will also lead to 
(\ref{f32old}) for smaller $p$. 
Estimate (\ref{f33old}) in turn follows from
(\ref{f23old}) and the triangle inequality w.r.t. to $\langle|\cdot|^p\rangle^\frac{1}{p}$ in form of
\begin{align*}
\langle\big(\int_0^{\ell_0}\|\frac{\partial}{\partial \ell} \xi_{\ell,T}\|d\ell\big)^p\rangle^\frac{1}{p}
&\le\int_0^{\ell_0}\langle\|\frac{\partial}{\partial \ell} \xi_{\ell,T}\|^p\rangle^\frac{1}{p}d\ell
=\int_0^{\ell_0}\langle\|\ell\frac{\partial}{\partial \ell} \xi_{\ell,T}\|^p\rangle^\frac{1}{p}\frac{d\ell}{\ell}%\\
\end{align*}
and the fact that the RHS of (\ref{f23old}) can be majorated as follows
\begin{align*}
\min\{(T^\frac{1}{3})^{-\frac{5}{4}-\frac{1}{p}\frac{5}{2}},
\ell(T^\frac{1}{3})^{-\frac{9}{4}-\frac{1}{p}\frac{5}{2}}\}\le
\ell^\eps(T^\frac{1}{3})^{-\frac{5}{4}-2\eps}
\end{align*}
because $\min\{a, b\}\leq a^\eps b^{1-\eps}$ for $a,b>0$ and $\eps\in (0,1)$ and for $p\geq 1$ so large that $\frac1p \frac52\leq \eps$. For smaller $p$, the estimate \eqref{f33old} follows by Jensen's inequality.

\medskip

\nd {\it Step 5}. We claim that for every $0<2\eps<\frac{1}{4}$ and any $p\geq 1$:
\begin{align*}
\langle\sup_{\ell\le 1}[\xi_\ell]_{-\frac54-2\eps}^p\rangle^\frac{1}{p}&\lesssim 1,\quad 
\langle\sup_{\ell,\ell'\le\ell_0}[\xi_\ell-\xi_{\ell'}]_{-\frac54-2\eps}^p\rangle^\frac{1}{p}\lesssim \ell_0^\eps, \quad \ell_0\leq 1.
\end{align*}
First, by Jensen's inequality in $\langle\cdot\rangle$, we note that Step 4 may be reformulated as follows: For every $0<2\eps<\frac{1}{4}$, $p\geq 1$, $\ell_0\leq 1$ and any $T\in (0,1]$:
\be
\label{eq:estnoise1}
\langle\sup_{\ell\le 1}\|\xi_{\ell,T}\|^p\rangle^\frac{1}{p}
\lesssim (T^\frac{1}{3})^{-\frac54-2\eps}, \quad
\langle\sup_{\ell,\ell'\le \ell_0}\|\xi_{\ell,T}-\xi_{\ell',T}\|^p\rangle^\frac{1}{p}
\lesssim \ell_0^\eps (T^\frac{1}{3})^{-\frac54-2\eps}.\ee
Let $T_n:=2^{n}$, $n\in \{0,-1,-2, \dots\}$. Note that for $T\in (0,1]$ we choose $T_{n}< T \leq T_{n+1}$ and we have 
\[ \lvert \xi_{\ell,T}(x) \rvert = \lvert (\xi_{\ell, T_{n}})_{T-T_{n}}(x) \rvert \leq \lVert \xi_{\ell, T_{n}} \rVert \int_{\R^2} \lvert \psi_{T-T_{n}}(y) \rvert dy  \lesssim \lVert \xi_{\ell, T_{n}} \rVert\]
as well as $\lvert \xi_{\ell,T}(x)-\xi_{\ell',T}(x) \rvert \lesssim  \lVert \xi_{\ell, T_n}-\xi_{\ell',T_n} \rVert$.
Then we have for some $0<\eps'<\eps$:
\begin{align*}
 \Bigl\langle \sup_{\ell\leq1} \sup_{T\in (0,1]}  \Bigl( (T^{\frac13})^{\frac54+2\eps} \lVert \xi_{\ell, T} \rVert \Bigr)^p \Bigr\rangle 
& \lesssim \Bigl\langle \sup_{\ell\leq1} \sup_{n\in \Z, n\leq 0} \Bigl( (T_n^{\frac13})^{\frac54+2\eps} \lVert \xi_{\ell,T_n} \rVert \Bigr)^p \Bigr\rangle\\
& \leq \Bigl\langle \sup_{\ell\leq1} \sum_{n\in \Z, n\leq 0} \Bigl( (T_n^{\frac13})^{\frac54+2\eps} \lVert \xi_{\ell, T_n} \rVert \Bigr)^p \Bigr\rangle\\
&\leq 
\sum_{n\in \Z, n\leq 0} \Bigl( (T_n^{\frac13})^{{p(\frac54+2\eps)} }\underbrace{\bigl\langle \sup_{\ell\leq1} \lVert \xi_{\ell, T_n} \rVert^p\bigr\rangle}_{\stackrel{\eqref{eq:estnoise1}}{\lesssim} (T_n^{\frac13})^{-(\frac54+2\eps')p}} \Bigr)\lesssim 1.
\end{align*}
By the same argument, relying on \eqref{f31old}, we also obtain:
$$
\Bigl\langle \sup_{\ell, \ell'\leq \ell_0}\sup_{T\in(0,1]} \Bigl( (T^{\frac13})^{{\frac54+2\eps}} \lVert \xi_{\ell,T} {- \xi_{\ell',T}} \rVert \Bigr)^p \Bigr\rangle^{\frac1p}\lesssim \ell_0^\eps.$$
By Lemma \ref{Lequiv} to convert the convolution-based norm
into a negative exponent H\"older norm we obtain the conclusion of Step 5. 

As $\{\xi_{\ell}\}$ is a Cauchy ``sequence" in the Banach space $C^{-\frac54-2\eps}$ we deduce that the limit $\xi$ of  $\xi_{\ell}$ as $\ell\to 0$ satisfies $\Bigl\langle \sup_{\ell\leq \ell_0} [\xi-\xi_\ell]_{-\frac54-2\eps}^p \Bigr\rangle^{1/p}\lesssim \ell_0^\eps$
for every $p\in [1, \infty)$.
%\medskip
\end{proof}

%%%%%%%%%%%%%%%%%%%%%%
%%%%%%%%%%%%%%%%%%%%%%

\subsection{Estimate of off-line term. Proof of Lemma \ref{L5}}\label{Off}

%\medskip

\begin{proof}[Proof of Lemma \ref{L5}] We consider the Fourier coefficients 
$F^{\ell}(k):=\int_{[0,1)^2}e^{-ik\cdot x}F^{\ell}(x)\, dx$, $k\in(2\pi\mathbb{Z})^2$, of $F^\ell$ 
and its logarithmic derivative $\ell\frac{\partial}{\partial\ell}F^{\ell}(k)$ in the convolution scale $\ell$.
For $k\in (2\pi \Z)^2\setminus\{0\}$, we claim the following stochastic (second moment) bounds:
\begin{equation}
\label{clai}
\langle|F^\ell(k)|^2\rangle\lesssim d^{-1}(k,0), \, \, 
\langle|\ell\frac{\partial}{\partial\ell}F^\ell(k)|^2\rangle\lesssim\min\{d^{-1}(k,0),\ell^2d(k,0)\}, \, \forall \ell\leq 1.
\end{equation}
Here, $\lesssim$ means up to a (generic) constant that only depends on $\phi$.

\medskip

Here comes the argument: Because of the projection $P$, $F^\ell(k)$ vanishes for $k_1=0$, so that we may restrict to
$k_1\not=0$. We appeal to the formula for the product
\begin{displaymath}
F^\ell(k)=\sum_{k'+k''=k}v_\ell(k')(\partial_2 Rv_\ell)(k'')
=-\sum_{k'+k''=k}v_\ell(k')(\sgn k_1'')k_2'' v_\ell(k''),
\end{displaymath}
where we used that the Fourier multiplier of $R$ is $i\sgn k_1$ and that
of $\partial_2$ is $ik_2$. By definition of $v$ via $Pv=v$ and 
$(-\partial_1^2-|\partial_1|^{-1}\partial_2^2)v=P\xi$ we have on the Fourier level
\begin{align}\nonumber
v_\ell(k)=G_\ell(k)\xi(k), \quad k\in (2\pi \Z)^2,
\end{align}
with the abbreviations \footnote{Do not confound $G$ with the ``heat kernel" used in the proof of Lemma \ref{L4}.}
\begin{align}\label{f8}
G(k):=\frac{|k_1|}{|k_1|^3+k_2^2}\quad\mbox{and}\quad
G_\ell(k):=\phi(\ell k_1,\ell^\frac{3}{2}k_2)G(k);
\end{align}
recall that $\phi(k)$ denotes the Fourier {\it transform} of the {symmetric} Schwartz mask $\phi$ of the
convolution kernel 
$\phi_\ell(x_1,x_2)=\ell^{-\frac{5}{2}}\phi(\frac{x_1}{\ell},\frac{x_2}{\ell^\frac{3}{2}})$. {As $\phi(x)$ is symmetric, the Fourier transform $\phi(k)$ is real valued.} 
With the abbreviations
\begin{align}\label{f1}
\tilde G(k):=-(\sgn k_1)k_2G(k)\quad\mbox{and}\quad 
\tilde G_\ell(k):=
\phi(\ell k_1,\ell^\frac{3}{2} k_2)\tilde G(k)
\end{align}
we thus obtain the formula
\begin{equation}\label{f2}
F^\ell(k)=\sum_{k'+k''=k}G_\ell(k')\tilde G_\ell(k'')\xi(k')\xi(k'').
\end{equation}
Recall the definition of $\del\phi_\ell$ introduced in \eqref{deltaphi} as well as the sensitivities $\ell\frac{\partial}{\partial \ell}$ with respect to the
convolution length $\ell$. In line with the second items in
(\ref{f8}) and (\ref{f1}),
this prompts the definition of
\begin{align}\label{f6}
\del G_\ell(k):=\del\phi(\ell k_1,\ell^\frac{3}{2} k_2)G(k),\quad
\del \tilde G_\ell(k):=\del\phi(\ell k_1,\ell^\frac{3}{2} k_2)\tilde G(k).
\end{align}
Hence from (\ref{f2}), we find by Leibniz' rule
\begin{align}\label{f4}
\ell\frac{\partial}{\partial \ell} F^\ell(k)=\sum_{k'+k''=k}
(\del G_\ell(k')\tilde G_\ell(k'')+G_\ell(k')\del\tilde G_\ell(k''))\xi(k')\xi(k'').
\end{align}

\nd {\it Step 1}. The first step for \eqref{clai} is to prove the following identities for the white noise: for $k',k'',l',l''\not=0$,
\begin{align}\label{f5}
\lefteqn{\langle\xi(k')\xi(k'')\overline{\xi(l')}\,\overline{\xi(l'')}\rangle}\nonumber\\
&=\left\{\begin{array}{cl}
\langle|\xi(k')|^2|\xi(k'')|^2\rangle&\mbox{for}\;\{k',k''\}=\{l',l''\}\\
\langle|\xi(k')|^2|\xi(l')|^2\rangle&\mbox{for}\;k'+k''=l'+l''=0\\
0&\mbox{else}
\end{array}\right\}.
\end{align}
Indeed, it follows easily from the characterization of white noise $\xi$ that the
real-valued random variables in the family $\{{\mathcal Re}\xi(k), {\mathcal Im}\xi(k)\}_{k\not=0}$ 
are centered, of variance $\frac{1}{2}$ and of vanishing covariances. 
Since these variables are also jointly Gaussian, they are in fact independent (and identically distributed) besides the linear constraint 
$\overline{\xi(k)}=\xi(-k)$.
For the sake of completeness, let us give an argument for (\ref{f5}) in form of
\begin{align}\label{f5bis}
\lefteqn{\langle\xi(k^1)\xi(k^2)\xi(k^3)\xi(k^4)\rangle}\nonumber\\
&=\left\{\begin{array}{cl}
\langle|\xi(k^1)|^2|\xi(k^2)|^2\rangle&\mbox{for}\;\{-k^1,-k^2\}=\{k^3,k^4\}\\
0&\mbox{if $\{k^1,k^2,k^3,k^4\}$ is not composed}\\[-0.5ex]
&\mbox{ of two pairs that sum to zero}
\end{array}\right\}.
\end{align}
Note that the RHS of (\ref{f5bis}) does not cover all cases explicitly; the missing cases are implicitly
covered by permutation symmetry of the lhs.
Here comes the argument for (\ref{f5bis}): Since for $k^1,k^2\not=0$, 
$\xi(k^1)=\overline{\xi(-k^1)}$ and $\xi(k^2)=\overline{\xi(-k^2)}$ are
independent unless $|k^1|=|k^2|$, the expression on the lhs of (\ref{f5bis}) vanishes
unless $(k^1,k^2,k^3,k^4)$ is composed by
two pairs of indices that agree up to the sign. By permutation we may w.l.o.g. assume that $\{|k_1|,|k_2|\}=\{|k_3|,|k_4|\}$.
We have to distinguish 4 cases: 1) $\{-k^1,-k^2\}=\{k^3,k^4\}$, in which case the expression turns
into the desired $\langle|\xi(k^1)|^2|\xi(k^2)|^2\rangle$. 2) $\{-k^1,k^2\}=\{k^3,k^4\}$, in which case
the expression turns into $\langle|\xi(k^1)|^2\xi^2(k^2)\rangle$. This expression vanishes, since
shifting $\xi$ to $\xi(\cdot+h)$ by a
shift vector $h$ with $h\cdot k^2=-\frac{\pi}{2}$ does not change the white-noise distribution
but changes the Fourier coefficient $\xi(k^2)$ by a factor of $i$,
while $|\xi(k^1)|^2$ is preserved. 3) $\{k^1,-k^2\}=\{k^3,k^4\}$, in which case we obtain $0$ for the same reason.
4) $\{-k^1,-k^2\}=\{k^3,k^4\}$, in which case we obtain $\langle\xi^2(k^1)\xi^2(k^2)\rangle$; here, we have to distinguish
the three sub cases: 4a) $|k^1|\not=|k^2|$ in which case $\xi^2(k^1)$ and $\xi^2(k^2)$ are
independent so that the expression assumes the form $\langle\xi^2(k^1)\rangle \langle\xi^2(k^2)\rangle$ with both factors
vanishing (by the argument under 2)). 
4b) $-k^1=k^2$, in which $\{k^1,k^2,k^3,k^4\}$ {\it is} composed of two pairs that sum to zero
and thus does not fall under the second case in (\ref{f5bis}). 4c) $k^1=k^2$ in which case the
expression turns into $\langle\xi^4(k^1)\rangle$ which can be seen to vanish by shifting $\xi$ by a vector
$h$ with $h\cdot k^1=-\frac{\pi}{4}$.

\medskip

\nd {\it Step 2}. We prove
\begin{align}
\langle|F^\ell(k)|^2\rangle&\lesssim \sum_{\stackrel{k'+k''=k}{k',k''\not=0}}d^{-4}(k',0)d^{-1}(k'',0),\nonumber\\
\langle|\ell\frac{\partial}{\partial \ell} F^\ell(k)|^2\rangle
&\lesssim\sum_{\stackrel{k'+k''=k}{k',k''\not=0}}
{\min}^2 \{1,\ell(d(k',0)+d(k'',0))\} d^{-4}(k',0)d^{-1}(k',0).\nonumber
\end{align}
For that, we make use of (\ref{f5}); because of $k_1\not=0$, the middle case of (\ref{f5}) does not occur
when applying it to the square of (\ref{f2}):
\begin{align}\nonumber
\langle|F^\ell(k)|^2\rangle=\sum_{k'+k''=k}&\big((G_\ell(k')\tilde G_\ell(k''))^2
+(G_\ell(k')\tilde G_\ell(k''))(G_\ell(k'')\tilde G_\ell(k'))\big)\nonumber\\
&\times\langle|\xi(k')|^2|\xi(k'')|^2\rangle.\nonumber
\end{align}
Because of the same structure, starting from (\ref{f4}) we get
\begin{align}
\langle|\ell\frac{\partial}{\partial \ell} F^\ell(k)|^2\rangle&=\sum_{k'+k''=k}
\bigg((\del G_\ell(k')\tilde G_\ell(k'')+G_\ell(k')\del\tilde G_\ell(k''))^2\nonumber\\
&+(\del G_\ell(k')\tilde G_\ell(k'')+G_\ell(k')\del\tilde G_\ell(k''))\times(\del G_\ell(k'')\tilde G_\ell(k')+G_\ell(k'')\del\tilde G_\ell(k'))\bigg)\nonumber\\
&\times\langle|\xi(k')|^2|\xi(k'')|^2\rangle.\nonumber
\end{align}
With help of Young's inequality (in conjunction with the symmetry
of $\langle|\xi(k')|^2|\xi(k'')|^2\rangle$ in $k'\leftrightarrow k''$) we may simplify to
\begin{align}\nonumber
\langle|F^\ell(k)|^2\rangle\le 2\sum_{k'+k''=k}G_\ell^2(k')\tilde G_\ell^2(k'')
\langle|\xi(k')|^2|\xi(k'')|^2\rangle,
\end{align}
and, applying this argument twice,
\begin{align}
\lefteqn{\langle|\ell\frac{\partial}{\partial \ell} F^\ell(k)|^2\rangle}\nonumber\\
&\le 4\sum_{k'+k''=k}
(\del G_\ell^2(k')\tilde G_\ell^2(k'')+G_\ell^2(k')\del\tilde G_\ell^2(k''))
\langle|\xi(k')|^2|\xi(k'')|^2\rangle.\nonumber
\end{align}
We appeal to the Cauchy-Schwarz inequality in form of $\langle|\xi(k')|^2|\xi(k'')|^2\rangle$
$\le(\langle|\xi(k')|^4\rangle\langle|\xi(k'')|^4\rangle)^\frac{1}{2}$, the identical distribution
of $\{\xi(k)\}_{k\not=0}$ in form of $\langle|\xi(k')|^4\rangle=\langle|\xi(k'')|^4\rangle$,
the independence and identical distribution of ${\mathcal Re}\xi(k)$ and ${\mathcal Im}\xi(k)$ (for $k\not=0$) in form of
$\langle|\xi(k)|^4\rangle$ $=2(\langle({\mathcal Re}\xi(k))^4\rangle$ $+\langle({\mathcal Re}\xi(k))^2\rangle^2)$,
and the (standard) Gaussianity of ${\mathcal Re}\xi(k)$ in form of 
$$\langle({\mathcal Re}\xi(k))^4\rangle=
\big(\frac12\big)^4\langle(\underbrace{2{\mathcal Re}\xi(k)}_{\textrm{of variance 1}})^4\rangle=
\big(\frac12\big)^4 3 \langle({2{\mathcal Re}\xi(k)})^2\rangle=\frac3{16}$$ 
 to conclude
$\langle|\xi(k')|^2|\xi(k'')|^2\rangle\lesssim 1$, which we insert:
\begin{align}
\langle|F^\ell(k)|^2\rangle&\lesssim \sum_{\stackrel{k'+k''=k}{k',k''\not=0}}G_\ell^2(k')\tilde G_\ell^2(k''),\label{f9}\\
\langle|\ell\frac{\partial}{\partial \ell} F^\ell(k)|^2\rangle
&\lesssim\sum_{\stackrel{k'+k''=k}{k',k''\not=0}}
(\del G_\ell^2(k')\tilde G_\ell^2(k'')+G_\ell^2(k')\del\tilde G_\ell^2(k'')).\label{f10}
\end{align}
By the following (build-in) relation between the symbol $G(k)=\frac{|k_1|}{|k_1|^3+k_2^2}$ of ${\mathcal L}^{-1}P$
and the intrinsic metric (which we obtain with help of the Young inequality), namely
\begin{align}\label{f36}
G(k)\lesssim d^{-2}(k,0)\quad\mbox{and thus}\quad |\tilde G(k)|\lesssim d^{-\frac{1}{2}}(k,0),
\end{align}
we obtain from the definitions (\ref{f8}) \& (\ref{f1})
\begin{align}\nonumber
|G_\ell(k)|\lesssim d^{-2}(k,0)\quad\mbox{and thus}\quad|\tilde G_\ell(k)|\lesssim d^{-\frac{1}{2}}(k,0), 
\end{align}
where we used that for our Schwartz kernel $|\phi(k)|\lesssim 1$. By definition, $\del\phi$ vanishes for $k=0$
so that here, we even have $|\del\phi(k)|\lesssim \min\{1,d(k,0)\}$. Using this in the definitions (\ref{f6})
we obtain
$$
|\del G_\ell(k)|\lesssim \min\{1,\ell d(k,0)\}d^{-2}(k,0)\quad\mbox{and} \quad |\del \tilde G_\ell(k)|\lesssim \min\{1,\ell d(k,0)\} 
d^{-\frac{1}{2}}(k,0).$$
Inserting this into (\ref{f9}) and (\ref{f10}) we obtain the conclusion of Step 2.

\medskip

\nd {\it Step 3}. In order to conclude with the claim \eqref{clai}, in view of Step 2, it remains to show
\begin{align}
\sum_{\stackrel{k'+k''=k}{k',k''\not=0}}d^{-4}(k',0)d^{-1}(k'',0)&\lesssim d^{-1}(k,0)\label{newul}
\end{align}
and for $\ell\le 1$ and $k\neq 0$,
\begin{align}
\sum_{\stackrel{k'+k''=k}{k',k''\not=0}}
{\rm min}^2\{1,\ell(d(k',0)+d(k'',0))\}d^{-4}(k',0)d^{-1}(k'',0)
\lesssim\min\{d^{-1}(k,0),\ell^2d(k,0)\}.\label{f11}
\end{align}
We will focus on the more subtle (\ref{f11}). 
Obviously, (\ref{f11}) splits into the three statements
\begin{align}
\sum_{\stackrel{k'+k''=k}{k',k''\not=0}}\frac{1}{d^4(k',0)d(k'',0)}&\lesssim\frac{1}{d(k,0)},\label{wg05}\\
\sum_{\stackrel{k'+k''=k}{k',k''\not=0}}\frac{1}{d^2(k',0)d(k'',0)}&\lesssim d(k,0),\label{wg04}\\
\sum_{\stackrel{k'+k''=k}{k',k''\not=0}}\frac{d(k'',0)}{d^4(k',0)} &\lesssim d(k,0).\label{wg01}
\end{align}
As we shall see, all these statements rely on
\begin{align}\label{wg02}
\sum_{k'\not=0}\frac{1}{d^4(k',0)}\lesssim\sum_{k'\not=0}\frac{1}{d^3(k',0)}\lesssim 1,
\end{align}
which is an immediate consequence of (\ref{f16old}), expressing that the effective
dimension of the $k$-space is $\frac{5}{2}<3$, and the triangle inequality in form of
\begin{align}\label{wg03}
|d(k'',0)-d(k',0)|\le d(k,0)\le d(k',0)+d(k'',0),
\end{align}
as a consequence of $k'+k''=k$.
Indeed, by (\ref{wg03}) in form of $d(k'',0)$ $\le d(k',0)+d(k,0)$ and (\ref{wg02}), 
the LHS of (\ref{wg01}) is estimated by $1+d(k,0)\lesssim d(k,0)$, where we used $k\not=0$
in the last step.
Turning to (\ref{wg04}), we split the domain of summation into $\{d(k',0)\le d(k'',0)\}$
and $\{d(k',0)>d(k'',0)\}$. By (\ref{wg02}) (with $k'$ replaced by $k''$ for the second contribution),
both contributions are estimated by $1\lesssim d(k,0)$.
Finally addressing (\ref{wg05}) (which coincides with \eqref{newul}), we split the domain of summation into the same two sets.
On the first domain $\{d(k',0)\le d(k'',0)\}$, by the second inequality in (\ref{wg03})
we must have $d(k'',0)$ $\ge\frac{1}{2}d(k,0)$, so that by (\ref{wg02}), the corresponding
contribution is estimated as desired. On the second domain $\{d(k',0)>d(k'',0)\}$, we use that for the same reason
$d^4(k',0)d(k'',0)\ge \frac{1}{2}d(k,0)d^4(k'',0)$, so that by (\ref{wg02}) (with $k''$ playing the role of $k'$)
also this contribution is estimated as desired.

\medskip

\nd {\it Step 4}. We claim that for all $\ell\le 1$, $x\in\mathbb{R}^2$, and $T>0$ we have the estimate
\begin{align}
\langle(F^\ell_T(x))^2\rangle^\frac{1}{2}
&\lesssim(T^\frac{1}{3})^{-\frac{3}{4}},\label{f18}\\
\langle(\ell\frac{\partial}{\partial \ell} F^\ell_T(x))^2\rangle^\frac{1}{2}
&\lesssim\min\{(T^\frac{1}{3})^{-\frac{3}{4}},\ell(T^\frac{1}{3})^{-\frac{7}{4}}\}.\label{f19}
\end{align} 
We focus on \eqref{f19} and use the same type of arguments as in Step 2 of the proof of Lemma~\ref{L1}.
Since the distribution $\xi$ and its translation $\xi(\cdot+h)$ by some translation vector $h$ have
the same distribution under $\langle\cdot\rangle$, and since $v$ arises as the solution of a (linear)
constant-coefficient (pseudo-) differential operator with RHS $\xi$, also
$v$ and $v(\cdot+h)$ have the same distribution as fields. This shift-invariance carries over to $v_\ell$,
$\partial_2v_\ell$, $\partial_2 Rv_\ell$ and thus to $v_\ell \partial_2 Rv_\ell$,
$F^\ell$ and $F^\ell_T$. 
This implies that $\langle(\ell\frac{\partial}{\partial\ell}F_T^\ell(x))^2\rangle$ does not depend on $x$ so that
it is enough to show
\begin{align*}
\langle\int_{[0,1)^2}(\ell\frac{\partial}{\partial\ell}F_T^\ell)^2dx\rangle\lesssim\min\{(T^\frac{1}{3})^{-\frac{3}{2}},
\ell^2(T^\frac{1}{3})^{-\frac{7}{2}}\}.
\end{align*}
By Plancherel, this assumes the form of
\begin{align*}
\sum_{k\not=0}\langle|\ell\frac{\partial}{\partial\ell}F_T^\ell(k)|^2\rangle\lesssim\min\{(T^\frac{1}{3})^{-\frac{3}{2}},
\ell^2(T^\frac{1}{3})^{-\frac{7}{2}}\}.
\end{align*}
Because of the identity $\ell\frac{\partial}{\partial\ell}F^\ell_T(k)$ 
$=\psi_T(k)\ell\frac{\partial}{\partial\ell}F^\ell(k)$, the inequality $0\le \psi_T(k)\le\exp(-Td^3(k,0))$, 
and the previous steps in form of (\ref{clai}) this reduces to
\begin{align*}
\sum_{k\not=0}\exp(-2Td^3(k,0))&\min\{d^{-1}(k,0),\ell^2d(k,0)\}
\lesssim\min\{(T^\frac{1}{3})^{-\frac{3}{2}},
\ell^2(T^\frac{1}{3})^{-\frac{7}{2}}\}.
\end{align*}
The latter obviously splits into
\begin{align*}
\sum_{k\not=0}\exp(-2Td^3(k,0))d^{-1}(k,0)\lesssim(T^\frac{1}{3})^{-\frac{3}{2}},\nonumber\\
\sum_{k\not=0}\exp(-2Td^3(k,0))d(k,0)
\lesssim(T^\frac{1}{3})^{-\frac{7}{2}},
\end{align*}
which both follows from the effective dimension $\frac{5}{2}$ of $k$-space, cf.~(\ref{f16old}).

\medskip

\nd {\it Step 5}.
We claim that for all $\ell\le 1$, $1\leq p<\infty$, and $T>0$ we have the estimate
\begin{align}
\langle\|F^\ell_T\|^p\rangle^\frac{1}{p}
&\lesssim(T^\frac{1}{3})^{-\frac{3}{4}-\frac{1}{p}\frac{5}{2}},\label{f22}\\
\langle\|\ell\frac{\partial}{\partial \ell} F^\ell_T\|^p\rangle^\frac{1}{p}
&\lesssim\min\{(T^\frac{1}{3})^{-\frac{3}{4}-\frac{1}{p}\frac{5}{2}},
\ell(T^\frac{1}{3})^{-\frac{7}{4}-\frac{1}{p}\frac{5}{2}}\}.\label{f23}
\end{align}
The argument proceeds as in Step 3 of the proof of Lemma \ref{L1} with the one notable difference that we now can
no longer simply appeal to Gaussianity to get the inverse H\"older estimate
\begin{align*}
\langle|F_T^\ell(x)|^p\rangle^\frac{1}{p}\lesssim\langle(F_T^\ell(x))^2\rangle^\frac{1}{2},
\end{align*}
and the analogous statement for $\ell\frac{\partial}{\partial\ell}F_T^\ell$. However, such an estimate remains
true because $F_T^\ell(x)$ is a quadratic functional of $\xi$, and as such an element of what is called the Second Wiener Chaos.
In this situation, the estimate is known as Nelson's estimate, see \cite[Proposition 3.3]{MourratWeberXu} for a proof.

\medskip

\nd {\it Step 6}. We claim that for all $\eps\in (0,1)$, $1\le p<\infty$ and $\ell_0\le 1$ we have for every $T\leq 1$:
\begin{align}
\langle\sup_{\ell\le 1}\|F^\ell_T\|^p\rangle^\frac{1}{p}
&\lesssim (T^\frac{1}{3})^{-\frac{3}{4}-2\eps},\label{f32}\\
\langle\sup_{\ell,\ell'\le \ell_0}\|F^\ell_T-F^{\ell'}_T\|^p\rangle^\frac{1}{p}
&\lesssim\ell_0^\eps
(T^\frac{1}{3})^{-\frac{3}{4}-2\eps},\label{f31}
\end{align}
where now, the constant also depends on $\eps$.
This follows by the same argument as in Step 4 in the proof of Lemma \ref{L1}.

\medskip

\nd {\it Step 7}. By the same argument as in Step 5 in the proof of Lemma \ref{L1}, we have 
for any $0<2\eps<\frac{1}{4}$ and any $p\in [1, \infty)$:
\begin{align*}
\langle\sup_{\ell\le 1}[F^{\ell}]_{-\frac34-2\eps}^p\rangle^\frac{1}{p}&\lesssim 1,\quad 
\langle\sup_{\ell,\ell'\le\ell_0}[F^{\ell}-F^{\ell'}]_{-\frac34-2\eps}^p\rangle^\frac{1}{p}
\lesssim \ell_0^\eps,
\end{align*}
where the constant inside $\lesssim$ depends on $p$ and $\eps$.

\medskip

\nd {\it Step 8}. We now give the argument that the limit $F$ of $F^\ell$ in the Banach space defined through
the norm $\langle[\cdot]_{-\frac34-2\eps}^p\rangle^\frac{1}{p}$ is independent of the (normalized) symmetric Schwartz kernel 
$\phi$ that entered the definition of $F^\ell$ via convolution.
Take another (symmetric) Schwartz kernel $\tilde\phi$ and (with a benign misuse of language) 
denote by $v_{\tilde\ell}:=\tilde\phi_{\tilde\ell}*v$ the corresponding convolutions
with the rescaled kernel $\tilde\phi_{\tilde\ell}$ on $x_1$-scale $\tilde\ell$, cf. (\ref{f37}).
An inspection of the above proof,
in particular the claim \eqref{clai}, shows that the second estimate of this lemma holds with $v$ replaced by
$v_{\tilde\ell}$, since the relevant Fourier multipliers $G(k)$, cf. (\ref{f8}), and $\tilde G(k)$,
cf. (\ref{f1}), that lead from $\xi$ to $v$ and $\partial_2 Rv$, respectively, are dominated,
cf. (\ref{f36}), in an identical way. Hence also $F^{\ell\tilde\ell}$ 
$:=P(v_{\ell\tilde\ell}\partial_2 Rv_{\ell\tilde\ell})$ is a Cauchy sequence in $\ell$ and thus
has a limit, all w.r.t. to the Banach-space norm $\langle[\cdot]_{-\frac34-2\eps}^p\rangle^\frac{1}{p}$, 
and the limit is uniform w.r.t. to $\tilde\ell$ in view of the above mentioned uniform-in-$\tilde\ell$ 
estimates on the Fourier multipliers. On the other hand,
thanks to $\tilde\ell>0$, this limit is classical and given by $F^{\tilde\ell}$ 
$:=P(v_{\tilde\ell}\partial_2 Rv_{\tilde\ell})$.
By symmetry, the same double-indexed object $F^{\ell\tilde\ell}$ has a limit in $\tilde\ell$ (always w.r.t. 
$\langle[\cdot]_{-\frac34-2\eps}^p\rangle^\frac{1}{p}$), this limit is uniform in $\ell$, 
and for $\ell>0$ is given by $F^{\ell}$.
Thus by the triangle inequality w.r.t. $\langle[\cdot]_{-\frac34-2\eps}^p\rangle^\frac{1}{p}$, 
$F^{\tilde\ell}$ and $F^\ell$ get closer and closer
for $\ell$ and $\tilde\ell$ tending to zero. This implies that their respective limits
in $\langle[\cdot]_{-\frac34-2\eps}^p\rangle^\frac{1}{p}$ coincide, as desired. 

\end{proof}

%%%%%%%%%%%%%%%%%%%%%%%%%%%%%%%%%%%%%%%%
%%%%%%%%%%%%%% APPENDIX     %%%%%%%%%%%%%%%%%%%%%%%%%%
%%%%%%%%%%%%%%%%%%%%%%%%%%%%%%%%%%%%%%%%
%%%%%%%%%%%%%%%%%%%%%%%%%%%%%%%%%%%%%%%%

\section{Appendix}

\subsection{The linearized energy}
\label{sec:lin_en}

For $\sigma>0$, we note that the linearized energy functional of \eqref{i10} on ${[0,1)^2}$:
\[ E_\text{lin}(u) = \int_{[0,1)^2} \Bigl( \lvert \partial_1 u \rvert^2 +  (\partial_2 u) \lvert \partial_1 \rvert^{-1} (\partial_2 u) 
- 2\sigma \xi \, u \Bigr) \, dx  \]
only admits critical points that have (negative) infinite energy (with positive probability). Indeed, if $u$ is a solution of the Euler-Lagrange equation $\mathcal{L}u =P\xi$, then we can explicitly solve it in Fourier space, obtaining
\[ u(k) =\sigma (k_1^2 + \lvert k_1 \rvert^{-1} k_2^2)^{-1} \xi(k) \, \textrm{ for }\,  
k_1\neq 0, \quad u(0, k_2)= 0  \, \textrm{ for }\,  k_2\in 2\pi\Z. \]
Since $\sum_{k_1\neq 0} \lvert k \rvert^{-2}$ diverges logarithmically,
\[ \langle E_\text{lin}(u) \rangle = - \sigma^2 \sum_{k\neq 0} \tfrac{\langle \lvert \xi(k) \rvert^2 \rangle}{\lvert k_1 \rvert^2 + \lvert k_1 \rvert^{-1} k_2^2}\sim -\sum_{k_1\neq 0} \tfrac{1}{\lvert k_1 \rvert^2 + \lvert k_1 \rvert^{-1} k_2^2} \leq -\sum_{k_1\neq 0} \tfrac{1}{\lvert k \rvert^2} = -\infty.  \]

\subsection{The anisotropic H\"older space $C^\alpha$ for $\alpha\in (0, \frac 32)$}
\label{sec:stand}

For the reader convenience, we give the following (standard) result for our anisotropic H\"older spaces $C^\alpha$, $\alpha>0$:

\begin{lem}\label{lem:linftyhoelder}
Let $u:[0,1)^2\to \R$ be a $1$-periodic function. If $\alpha\in (0, 1]$, then $\|u\|\lesssim [u]_\alpha$ provided that $u$ is of vanishing average.
\footnote{In particular, the assumption is satisfied if $u$ is of vanishing average in $x_1$.} If $\alpha\in (1, \frac32)$, then $\|\partial_1 u\|\lesssim [\partial_1 u]_{\alpha-1}\lesssim [u]_{\alpha}$ for every periodic function $u$. Moreover, if $\frac32>\alpha\geq \beta>0$, then $[u]_\beta\lesssim [u]_\alpha$, i.e., $C^\alpha\subset C^\beta$. Also, for two periodic functions $u\in C^\alpha$ and $f\in C^\beta$ on $[0,1)^2$ with $\alpha\geq \beta>0$, then $uf\in C^\beta$ with 
$$[uf]_\beta\lesssim \left([u]_\alpha+\left|\int_{[0,1)^2}u\, dx\right|\right) \left( [f]_\beta+\left|\int_{[0,1)^2}f\, dx\right|\right).$$ 
\end{lem}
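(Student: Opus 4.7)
The plan is to prove the four assertions in the stated order, each building on its predecessors; no single step is deep, since the entire lemma amounts to a diligent unpacking of Definition \ref{definitionCalpha} combined with the boundedness of $d$ on the torus $[0,1)^2$. For part 1, since $u\in C^\alpha$ is continuous (as an $\alpha$-H\"older function with $\alpha>0$) and periodic with vanishing average, it must attain the value $0$ at some point $x_0$ by the intermediate value theorem; then $|u(x)|=|u(x)-u(x_0)|\leq [u]_\alpha\, d(x,x_0)^\alpha\lesssim [u]_\alpha$, because $d$ is bounded on $[0,1)^2$.

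For part 2, the main work is to establish $[\partial_1 u]_{\alpha-1}\lesssim [u]_\alpha$. For a pair of points $x,y$ differing only in the $x_1$-coordinate, subtracting the two Taylor-remainder inequalities furnished by Definition \ref{definitionCalpha} (one expanded at $x$, one at $y$) and dividing by $|y_1-x_1|$ yields the desired estimate in that direction. For points $x=(x_1,x_2)$ and $y=(x_1,x_2+\delta)$ differing only in the $x_2$-coordinate, I insert an auxiliary $x_1$-increment $h$ and apply the Taylor inequality at both heights $x_2$ and $x_2+\delta$; the two pure $x_2$-differences $|u(x_1,x_2+\delta)-u(x_1,x_2)|$ and $|u(x_1+h,x_2+\delta)-u(x_1+h,x_2)|$ are controlled by $[u]_\alpha\,\delta^{2\alpha/3}$ via the trivial $x_2$-version of the definition. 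After dividing by $|h|$ I optimize $|h|=\delta^{2/3}$, which produces exactly the exponent $2(\alpha-1)/3$ expected for the anisotropic metric; this optimization is the most delicate sub-step. The bound $\|\partial_1 u\|\lesssim [\partial_1 u]_{\alpha-1}$ then follows by applying part 1 to $\partial_1 u$, which has vanishing average on $[0,1)^2$ (being the $x_1$-derivative of a $1$-periodic function).

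For part 3, I again exploit that $d$ is bounded on $[0,1)^2$, so $d^{\alpha-\beta}\lesssim 1$. In the homogeneous cases (both $\alpha,\beta\leq 1$ or both in $(1,\tfrac{3}{2})$), the ratio defining $[u]_\beta$ is dominated directly by the corresponding ratio for $[u]_\alpha$. The only nontrivial case is $\alpha>1\geq\beta$, where I write $|u(y)-u(x)|\leq [u]_\alpha d^\alpha+\|\partial_1 u\|\,|y_1-x_1|$, use part 2 to bound $\|\partial_1 u\|\lesssim [u]_\alpha$, and once more absorb the extra powers of $d$ by boundedness.

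For part 4, set $\bar u=\int_{[0,1)^2}u\,dx$ and $\bar f=\int_{[0,1)^2}f\,dx$. Applying parts 1 and 3 to $u-\bar u$ yields $\|u-\bar u\|\lesssim [u]_\alpha$ (when $\alpha>1$, one first reduces to a smaller exponent via part 3 before invoking part 1), so that $\|u\|\lesssim|\bar u|+[u]_\alpha$ and analogously $\|f\|\lesssim|\bar f|+[f]_\beta$. The remainder is a standard Leibniz computation: for $\beta\leq 1$, the classical product bound $|u(y)f(y)-u(x)f(x)|\leq \|u\|\,[f]_\beta d^\beta+\|f\|\,[u]_\alpha d^\alpha$ combined with $d^\alpha\lesssim d^\beta$ concludes. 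For $\beta\in(1,\tfrac{3}{2})$, I expand both $u$ and $f$ in their respective Taylor-remainder forms at $x$ and collect terms so that the Taylor remainder of $uf$ at $x$ is bounded by a sum of $[u]_\alpha\|f\|d^\alpha$, $\|u\|[f]_\beta d^\beta$, and a cross term of order $\|\partial_1 u\|\,\|\partial_1 f\|\,|y_1-x_1|^2$, each of which is absorbed into a multiple of $d^\beta$ via the boundedness of $d$. Multiplying the resulting factors yields the claimed product estimate.
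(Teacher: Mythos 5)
Your proposal is correct and follows essentially the same four-step structure and estimates as the paper's proof. The only genuine micro-variation is in the first assertion, where you locate a zero of $u$ via the intermediate value theorem rather than integrating the H\"older inequality over $y$ as the paper does — both are standard and equally valid. For the record, in part~2 your optimization $|h|=\delta^{2/3}$ is the correct choice (the paper's text has a typo at this point, writing $|x_1-y_1|=|x_2-y_2|^{2\alpha/3}$ where $|x_2-y_2|^{2/3}$ is intended, as one sees by balancing $d^\alpha(x,y)/|x_1-y_1|$ against $|x_1-y_1|^{\alpha-1}$).
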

\begin{proof} 

\nd {\it Step 1. If $\alpha\in (0, 1]$, then $\|u\|\lesssim [u]_\alpha$ for every periodic function $u$ of vanishing average.} Indeed, we have $u(x)-u(y) \leq [u]_\alpha d^\alpha(x,y) \lesssim [u]_\alpha$ for every $x,y\in {[0,1)^2}$. 
Integrating in $y\in {[0,1)^2}$, as $u$ has zero average, we obtain $u(x) \lesssim [u]_\alpha$ for all $x\in {[0,1)^2}$. 
Similarly, we have $-u(x) \lesssim [u]_\alpha$ on ${[0,1)^2}$, so that the first claim follows. 

\nd {\it Step 2. If $\alpha\in (1, \frac32)$, then $[\partial_1 u]_{\alpha-1}\lesssim [u]_{\alpha}$ for every periodic function $u$.} Indeed, let $u\in C^{\alpha}$ for $\alpha \in (1, \frac 32)$, i.e.,
$|u(y)-u(x)-\partial_1u(x)(y_1-x_1)|\leq [u]_\alpha d^\alpha(x,y)$ for every $x,y\in [0,1)^2$. Interchanging $x$ and $y$, by summation, we deduce that
$\big|[\partial_1u(x)-\partial_1u(y)](y_1-x_1)\big|\leq 2[u]_\alpha d^\alpha(x,y)$. In particular, for $x_2=y_2$, we deduce that 
$|\partial_1u(x_1, x_2)-\partial_1u(y_1, x_2)|\leq 2[u]_\alpha |x_1-y_1|^{\alpha-1}$ for every $x_1, y_1, x_2\in [0,1)^2$. We conclude that for every $x_1, x_2, y_2 \in [0,1)^2$: 
\begin{align*}
|\partial_1u(x_1, x_2)-\partial_1u(x_1, y_2)|&\leq |\partial_1u(x)-\partial_1u(y)|+|\partial_1u(x_1, y_2)-\partial_1u(y_1, y_2)|\\
&\leq 2[u]_\alpha \big(\frac{d^\alpha(x,y)}{|x_1-y_1|}+|x_1-y_1|^{\alpha-1}\big)\lesssim [u]_\alpha |x_2-y_2|^{2(\alpha-1)/3}
\end{align*} for $y_1$ chosen such that $|x_1-y_1|=|x_2-y_2|^{2\alpha/3}$. As $\partial_1 u$ is of vanishing average, Step 1 implies that $\|\partial_1 u\|\lesssim [\partial_1 u]_{\alpha-1}$.

\nd {\it Step 3. We prove that $[u]_\beta\lesssim [u]_\alpha$ for every periodic function $u$ if $0<\beta\leq \alpha <\frac32$.} Indeed, this is straightforward by Definition \ref{definitionCalpha} if $\alpha\leq 1$, respectively by Definition \ref{definitionCbeta} if $\beta> 1$. It remains to treat the case $\beta\leq 1<\alpha$. By Step 2, we already know that
$|u(x_1, x_2)-u(x_1, y_2)|\leq [u]_\alpha |x_2-y_2|^{2\alpha/3}$ and $|\partial_1u(x_1, x_2)-\partial_1u(y_1, x_2)|\leq 2[u]_\alpha |x_1-y_1|^{\alpha-1}$ for every $x_1, y_1, x_2, y_2\in [0,1)^2$. As $u$ is periodic and $\partial_1 u$ is of vanishing average, we deduce that $\|\partial_1 u\|\lesssim [u]_\alpha$. Therefore
\begin{align*}
|u(x)-u(y)|&\leq |u(x_1,x_2)-u(x_1, y_2)|+ |u(x_1,y_2)-u(y_1, y_2)|\\
&\leq [u]_\alpha |x_2-y_2|^{2\alpha/3}+\|\partial_1 u\| |x_1-y_1|\lesssim [u]_\alpha d^\beta(x,y).
\end{align*}

\nd {\it Step 4. We prove that $uf\in C^\beta$ for two periodic functions $u\in C^\alpha$ and $f\in C^\beta$ in $[0,1)^2$ with $\alpha\geq \beta>0$}. If $\beta\leq 1$, then by Step 3 we know that $u\in C^\alpha\subset C^\beta$ and we conclude that $uf\in C^\beta$ with the desired inequality for the product $uf$ because $\|u\|\lesssim [u]_\alpha+|\int_{[0,1)^2}u\, dx|$ and $\|f\|\lesssim [f]_\beta+|\int_{[0,1)^2}f\, dx|$. If $\beta>1$, then we have
for $x=(x_1,x_2)$, $y=(y_1, y_2)\in [0,1)^2$: 
\begin{align*}
&|(uf)(x)-(uf)(y)-\partial_1(uf)(y_1-x_1)|\\
&\leq |u(x)| [f]_\beta d^\beta(x,y)+|f(y)| [u]_\alpha d^\alpha(x,y)+|\partial_1 u(x)| |f(x)-f(y)| |y_1-x_1|\\
&\lesssim \|u\| [f]_\beta d^\beta(x,y)+\|f\| [u]_\alpha d^\beta(x,y)+\|\partial_1 u\| ([f]_\beta d^{\beta}(x,y)+\|\partial_1 f\| |y_1-x_1|) |y_1-x_1|\\
& \lesssim ([u]_\alpha+|\int_{[0,1)^2}u\, dx|)( [f]_\beta+|\int_{[0,1)^2}f\, dx|) d^\beta(x,y)
\end{align*}
as $\|\partial_1 u\|\lesssim [u]_\alpha$ and $\|\partial_1 f\|\lesssim [f]_\beta$.
\end{proof}

As a consequence we prove the following compactness result:

\begin{lem}\label{lem:compactness}
Let $(f_n)_n\subset \mathcal{D}'({[0,1)^2})$ be a sequence of periodic distributions satisfying the uniform bound $\limsup_{n\to\infty} [f_n]_\beta<\infty$ for some $\beta\in (-\frac32,\frac 32)\setminus\{0\}$. If $\beta\in (0, \frac32)$, we assume in addition,  that $\{\|f_n\|\}_n$ is uniformly bounded. Then there exists $f\in C^{\beta}$ in $[0,1)^2$ such that along a subsequence we have $f_n\wto f$ in $\mathcal{D}'({[0,1)^2})$ for $n\to\infty$ and $[f]_\beta \leq \liminf_{n\to\infty} [f_n]_\beta$.
\end{lem}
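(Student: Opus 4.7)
The plan is to treat the two regimes $\beta\in(0,\frac{3}{2})$ and $\beta\in(-\frac{3}{2},0)$ separately, in each case combining an Arzel\`a--Ascoli compactness argument with the standard lower semi-continuity of the H\"older semi-norm under pointwise limits.

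\emph{Case $\beta\in(0,\frac{3}{2})$.} The hypotheses give uniform bounds on $\|f_n\|$ and $[f_n]_\beta$. When $\beta\in(0,1]$, Definition \ref{definitionCalpha} directly yields the equicontinuity estimate $|f_n(y)-f_n(x)|\le [f_n]_\beta d^\beta(y,x)$, so Arzel\`a--Ascoli on the torus produces a subsequence converging uniformly to some continuous $f$. When $\beta\in(1,\frac{3}{2})$, Step 2 of Lemma \ref{lem:linftyhoelder} additionally gives uniform bounds on $\|\partial_1 f_n\|$ and $[\partial_1 f_n]_{\beta-1}$, so the pair $\{(f_n,\partial_1 f_n)\}$ is equibounded and equicontinuous, allowing us to extract a subsequence with $f_n\to f$ and $\partial_1 f_n\to \partial_1 f$ uniformly. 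In either case uniform convergence implies $f_n\wto f$ in $\mathcal{D}'$, and passing to the $\liminf$ in the defining pointwise inequality of $[\cdot]_\beta$ yields $[f]_\beta\le\liminf_n [f_n]_\beta$.

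\emph{Case $\beta\in(-\frac{3}{2},0)$.} Here we reduce to the positive case through Definition \ref{definitionCbeta}. For each $n$ choose a decomposition
\[
f_n=c_n+\partial_1 g_n+\partial_2 h_n\quad\text{or}\quad f_n=c_n+\partial_1^2 g_n+\partial_2 h_n
\]
(according as $\beta\in(-1,0)$ or $\beta\in(-\frac{3}{2},-1]$) whose cost $|c_n|+[g_n]_{\alpha_g}+[h_n]_{\alpha_h}$, with $(\alpha_g,\alpha_h)=(\beta+1,\beta+\frac{3}{2})$ or $(\beta+2,\beta+\frac{3}{2})$ respectively, exceeds the infimum $[f_n]_\beta$ by at most $\tfrac{1}{n}$. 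Subtracting their averages (which does not alter the derivatives appearing in the decomposition), we may assume $g_n$ and $h_n$ have vanishing spatial average; since $\alpha_g,\alpha_h\in(0,\frac{3}{2})$, Lemma \ref{lem:linftyhoelder} upgrades the H\"older-seminorm bounds to uniform $C^0$ bounds on $g_n$ and $h_n$. Applying the argument of the first case ($\beta>0$) to $g_n,h_n$, we extract along a common subsequence $c_n\to c$, $g_n\to g$, $h_n\to h$ uniformly, with $[g]_{\alpha_g}\le\liminf_n[g_n]_{\alpha_g}$ and the analogous bound for $h$. Uniform convergence entails distributional convergence of the derivatives, hence $f_n\wto f:=c+\partial_1 g+\partial_2 h$ (resp.\ the second-order variant). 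Plugging this particular representation into Definition \ref{definitionCbeta} finally gives
\[
[f]_\beta\le|c|+[g]_{\alpha_g}+[h]_{\alpha_h}\le\liminf_n\bigl(|c_n|+[g_n]_{\alpha_g}+[h_n]_{\alpha_h}\bigr)\le\liminf_n [f_n]_\beta.
\]

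The one delicate point is arranging a decomposition that is simultaneously \emph{near-optimal}---to obtain the sharp constant $\liminf_n[f_n]_\beta$ rather than a fixed multiple of it---and \emph{$C^0$-compact}; this is handled by the $\tfrac{1}{n}$-approximation of the infimum together with the vanishing-average normalization of $g_n,h_n$, which makes Lemma \ref{lem:linftyhoelder} applicable across the full range of positive exponents appearing in the decomposition.
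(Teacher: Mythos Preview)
Your proof is correct and follows essentially the same route as the paper's: Arzel\`a--Ascoli for the positive-exponent case (treating $\beta\in(1,\tfrac32)$ via the derived bounds on $\partial_1 f_n$), and reduction to that case via a near-optimal decomposition from Definition~\ref{definitionCbeta} for negative exponents. One small point worth making explicit: to obtain $[f]_\beta\le\liminf_n[f_n]_\beta$ over the \emph{full} sequence rather than the extracted subsequence, first pass to a subsequence along which $[f_n]_\beta\to\liminf_n[f_n]_\beta$ before running the compactness argument---the paper does this at the outset.
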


\begin{proof}[Proof of Lemma \ref{lem:compactness}] We can always assume that $[f_n]_{\beta}\to \liminf_{n} [f_n]_{\beta}=:C_0$ (by extracting eventually a subsequence).

If $\beta\in (0,1]$, then by Definition \ref{definitionCalpha} we know that $\{f_n\}$ is equicontinuous and uniformly bounded; thus the Ascoli theorem implies the conclusion. If $\beta\in (1,\frac32)$, by Lemma \ref{lem:linftyhoelder}, we know that $\{[f_n]_1\}$ is uniformly bounded as well as $\|\partial_1 f_n\|\lesssim [\partial_1 f_n]_{\beta-1}\lesssim C_0$; applying again the Ascoli theorem to $\{f_n\}$ and $\{\partial_1 f_n\}$, we deduce the conclusion.

We prove the case $\beta\in (-1,0)$ (the case $\beta\in (-3/2, 1]$ is similar). By Definition \ref{definitionCbeta}, there exist $g_n\in C^{1+\beta}({[0,1)^2},d)$ and $h_n\in C^{3/2+\beta}({[0,1)^2},d)$ (of vanishing average) such that 
$f_n=\int_{[0,1)^2}f_n\, dx+\partial_1 g_n+
\partial_2 h_n\in \mathcal{D}'({[0,1)^2})$ and
\[|\int_{[0,1)^2}f_n\, dx|+ [g_n]_{1+\beta} +  [h_n]_{3/2+\beta} \to C_0 \quad \textrm{as} \quad n\to 0  \]
and $\{\|g_n\|\}_n$ and $\{\|h_n\|\}_n$ are uniformly bounded.
By the previous cases, we know that $\int_{[0,1)^2}f_n\, dx\to A$, $g_n\to g$ and 
$h_n\to h$ for $n\to\infty$ along a subsequence where $g\in C^{1+\beta}$ and $h\in C^{3/2+\beta}$ in $[0,1)^2$ are of vanishing average and $[g]_{\beta+1} \leq \liminf_{n\to\infty} [g_n]_{\beta+1}$ as well as
$[h]_{\beta+3/2} \leq \liminf_{n\to\infty} [h_n]_{\beta+3/2}$. 
In particular, defining $f:=A+\partial_1 g + \partial_2 h$, Definition \ref{definitionCbeta} implies $f\in 
C^{\beta}$ in $[0,1)^2$ (of average $A$) with $[f]_\beta\leq C_0$ and $f_n \wto f$ distributionally because for every $\zeta\in C^\infty({[0,1)^2})$:
\begin{align*} 
\int_{[0,1)^2} (f_n-A) \zeta \, dx &= -\int_{[0,1)^2} (g_n \partial_1 \zeta + h_n \partial_2 \zeta) \, dx\\
& \stackrel{n\to\infty}{\to} -\int_{[0,1)^2} (g \partial_1 \zeta + h \partial_2 \zeta)  \, dx = \int_{[0,1)^2} (f-A) \zeta \,  dx.  
\end{align*} 
\end{proof}

\section*{Acknowledgements}
{The authors acknowledge contributions of Lukas D\"oring to an early version of this paper.}
R.I. acknowledges partial support by the ANR project ANR-14-CE25-0009-01. He thanks for the hospitality and support of the following institutions: Max-Planck-Institut f\"ur Mathematik in den Naturwissenschaften,  Institut des Hautes Etudes Scientiques (IHES) and Basque Center for Applied Mathematics, where part of this work was done. F.O. thanks for the hospitality and support of IHES, where part of this work was done.

\end{document}